\def\ve#1{\mathchoice{\mbox{\boldmath$\displaystyle\bf#1$}}
{\mbox{\boldmath$\textstyle\bf#1$}}
{\mbox{\boldmath$\scriptstyle\bf#1$}}
{\mbox{\boldmath$\scriptscriptstyle\bf#1$}}}
\newcommand\veb{{\ve b}}
\newcommand\ved{{\ve d}}
\newcommand\veg{{\ve g}}
\newcommand\veh{{\ve h}}
\newcommand\ves{{\ve s}}
\newcommand\veu{{\ve u}}
\newcommand\vev{{\ve v}}
\newcommand\vex{{\ve x}}
\newcommand\veX{{\ve X}}
\newcommand\vey{{\ve y}}
\newcommand\vez{{\ve z}}
\newcommand\veo{{\ve 0}}
\newcommand{\mh}{\mathcal{H}}
\newtheorem{theorem}{Theorem}[section]
\newtheorem{lemma}[theorem]{Lemma}
\newtheorem{corollary}[theorem]{Corollary}
\newtheorem{question}[theorem]{Question}
\newtheorem*{theorem*}{Theorem}
\newtheorem{definition}{Definition}
\newtheorem{proposition}[theorem]{Proposition}
\newcommand*{\myproofname}{Proof}
\newcommand{\Z}{{\mathbb Z}}
\newcommand{\R}{\mathbb{R}}
\newcommand{\supp}{\text{supp}}
\definecolor{cbpink}{HTML}{DC267F}
\definecolor{cbblue}{HTML}{648FFF}
\definecolor{cborange}{HTML}{FE6100}
\date{}
\begin{document}

\title{On Circuit Imbalance and $0/1$ Circuits for Coloring\\ and Spanning Forest Problems}

\author{Steffen Borgwardt\thanks{\texttt{steffen.borgwardt@ucdenver.edu}; University of Colorado Denver, Department of Mathematical and Statistical Sciences} \qquad Nicholas Crawford\thanks{{\texttt{ncrawford@losmedanos.edu}; {Los Medanos College}}} \qquad Sean Kafer\thanks{\texttt{spkafer@ilstu.edu}; Illinois State University, Department of Mathematics} \\ Jon Lee\thanks{\texttt{jonxlee@umich.edu}; University of Michigan} \qquad Angela Morrison\thanks{\texttt{angela.morrison1@ucalgary.ca}; University of Calgary, Department of Mathematics and Statistics}} 

\maketitle
\begin{abstract}
Circuits are fundamental objects in linear programming and oriented matroid theory, representing the elementary difference vectors of a polyhedron between points in its affine space. A recent concept introduced by Ekbatani, Natura, and V\'egh, the \emph{circuit imbalance}, serves as a complexity measure relevant to iteration bounds for circuit-based augmentation and circuit diameters, as well as the general interpretability of circuits in terms of the underlying application. 
In this paper, we analyze linear programming formulations of relaxed combinatorial optimization problems to prove two contrasting types of results related to the circuit imbalance. 

On one hand, we identify simple and common constraint structures, in particular arising in graph-theoretic problems, that inherently lead to an exponential circuit imbalance. These constructions show that, in quite general situations, working with the entire set of circuits poses significant challenges for an application of circuit augmentation or the study of circuit diameters.

On the other hand, through a case study of two classic graph-theoretic problems with exponential imbalance, the vertex graph coloring problem and the maximum weight forest problem, we exhibit the existence of sets and subsets of highly interpretable circuits of (best-case) imbalance $1$. These sets correspond to the recoloring of vertices or to the addition or removal of edges, respectively,
for example generalizing classic concepts of Kempe dynamics in coloring.   
Their interpretability in terms of the underlying application facilitates a study of circuit walks in the corresponding polytopes. We prove that a restriction of circuit walks to these sets suffices to not only guarantee reachability of the integral extreme-points of the skeleton, but leads to linear and constant circuit diameter bounds, respectively.
\end{abstract}

\noindent{\bf Keywords:} polyhedron, linear programming, circuits, circuit imbalance, vertex coloring, maximum weight forest\\\\
\noindent{\bf MSC:} 05C69, 52B05, 90C05, 90C27

\section{Introduction} 
\label{sec:intro}
The \textit{circuits} or \textit{elementary vectors} of a polyhedron $P=\{\vex:A\vex=\veb,B\vex\leq\ved\}$ are a classic topic introduced in the works of several authors in the 1960s and 70s  \cite{c-64, g-75, r-69, t-65}. Formally, they are nonzero vectors $\veg$ in $\ker(A)$ such that $B\veg$ is support-minimal. They are a generalization of the edge directions of $P$, and serve as an inclusion-minimal test set for optimality certificates for any linear program (LP) over $P$. In the past decade, research related to circuits has focused on \textit{circuit augmentation schemes} for solving LPs \cite{bv-17,dhl-15,dks-19}, generalizations of the Simplex method for solving LPs, wherein one can move in the direction of any circuit (taking a maximal-length step) instead of just along edges, and the \textit{circuit diameter} of polyhedra \cite{bfh-14,bgl-23,bgl-24,kps-19}, which generalizes the combinatorial diameter in the same way and gives a lower bound on the number of iterations necessary to solve an LP via circuit augmentation algorithms, as well as a lower bound on the combinatorial diameter itself. 
Circuits have also been used indirectly as an analytical tool to bound the performance of interior point methods \cite{dkno-24}. Research in the area takes several forms and has often focused on problems in combinatorial optimization. In fact, many algorithms from combinatorial optimization turn out to be efficient implementations of circuit augmentation tailored to the application.

It is well known that for many structured problems, there exist circuits that are readily interpreted in terms of the underlying application. This comes from a combination of two properties: first, circuits have an inclusion-minimal support; for standard-form problems, this typically means that they only have few nonzero entries. And second, it greatly matters how complicated the nonzero entries of a circuit are. This can be formally described by the so-called \textit{circuit imbalance measure}, which is the maximum absolute ratio of nonzero components of any circuit (Definition \ref{def:imba}). Whereas a small support is guaranteed by definition, a low circuit imbalance is not. In fact, it can be exponentially large in terms of the natural parameters of the underlying problem (for example, the number of vertices in the graph defining the problem). A high circuit imbalance is directly opposed to the ease of a formal analysis and study of diameters and circuit pivot rules. This leads to two promising directions of research: the study of problems where  circuit imbalance is known to be low; or a restriction of the analysis to those circuits of low imbalance to gain valuable insights on more complicated problems. 

Circuit augmentation and diameters are best understood for polyhedra with low circuit imbalance measure. A prime example lies in network-flows applications, which have totally-unimodular constraint matrices, in turn guaranteeing that circuits only have nonzero entries $\pm 1$. For example, the characterization of circuits in \cite{bm-23,gdl-14,env-21} are a key to showing that many graph-theoretic algorithms are, in fact, circuit augmentation schemes with respect to conceptually simple pivot rules. However, only a small number of concrete LPs from combinatorial optimization are known to have sub-exponential circuit imbalance measure.  These include the fractional matching polytope, the fractional stable set polytope, and problems with totally-unimodular constraint matrices, such as the network-flows problems mentioned above \cite{dks-19, kps-19, o-10}. 

In this work, we are interested in furthering the understanding of problems of high circuit imbalance. As we will see, the suggested approach of restricting the analysis to circuits of low imbalance is a powerful tool to obtain new insight.  

A typical strategy for connecting classical algorithms to circuit augmentation schemes, or for bounding the circuit diameter, builds on a characterization of the set of circuits of the polyhedron at hand. To this end, one attempts to devise a complete or partial description of the circuits in terms of the underlying combinatorial problem \cite{bfh-14, bm-23, bv-19a, 
env-21, kps-19}. For example, the circuit directions for the clustering problem described in \cite{bv-19a} can be thought of as swapping items between clusters. As one of our contributions, we add to this area by establishing a relation between the circuits of coloring polytopes and classic Kempe dynamics.

A key challenge lies in the fact that for many polyhedra in combinatorial optimization, the set of circuits is \textit{not} fully characterized, and it seems difficult to do so. This challenge is directly linked to the circuit imbalance and, relatedly, the fact that many (likely, most) of the circuits therefore admit no clear combinatorial interpretation. 
For example, a circuit $\veg$ defined over a set $E$ of size $n$ wherein one component of $\veg$ is $2^n$ times larger than another is unlikely to admit a natural interpretation in terms of the underlying set.  This is in contrast to, say, a circuit $\veg\in\{0,\pm 1\}^n$, which we call a \textit{0/1 circuit}. Such circuits are readily interpreted as adding and removing elements from some subset $M$ of $E$ (e.g., adding and removing edges from a given matching). As one of our main contributions, in Section \ref{sec:imbal} we demonstrate that in a wide range of graph-theoretic problems, one already arrives at an exponential circuit imbalance just by using a small set of constraints with a simple structure: given a graph $G=(V,E)$, we consider various potential constraint matrices $A$ whose rows are in $\{0,1\}^E$.  We consider a variety of natural choices of families $\mathcal{H}\subseteq 2^E$ to determine the supports of the rows of $A$ (i.e., of the constraints).  In all of the cases we consider, we are able to show that any polyhedron $\{\vex\in\R^n:A\vex\leq b,\vex\geq\veo\}$ has circuit imbalance at least exponential in $|E|$. This includes cases where $A$ has a number of rows which is only polynomial in $|E|$.  We find these observations to support the general hypothesis that it is rare for polyhedra arising from combinatorial optimization to have sub-exponential circuit-imbalance.

In Sections \ref{sec:col} and \ref{sec:tree}, we then study two classic families of polytopes, corresponding to fractional vertex coloring and graphic matroids (or the search for a maximum-weight forest). While we show that both of them have an exponential circuit imbalance, we are able to gain valuable insight from a study of their 0/1 circuits, leading to a generalization of classic concepts of combinatorial reconfiguration and constant circuit diameter bounds, respectively. 

A restriction of the analysis to 0/1 circuits has already proven to be a successful approach in the literature for previous analytical results. 
Specifically, it was shown that the 0/1 circuits suffice to find extremely short paths (of just 1 or 2 steps) between the integral solutions of the well-known traveling salesman problem (TSP) \cite{kps-19}, for which our results in Section \ref{sec:imbal} also imply exponential circuit imbalance. Further, many combinatorial algorithms which are, as mentioned above, in essence circuit augmentation algorithms utilize only 0/1 circuits, even when the associated polyhedron has other circuits. 
And finally, a restriction to 0/1 circuits gives a natural connection to the area of combinatorial reconfiguration: given two integral extreme-point solutions of an LP, one can ask if it is possible to transform one solution into another via a set of valid ``moves", where each intermediary solution is required to also be integral. In a yes instance, one then asks how many steps it takes to transform any solution into any other solution. Here, our set of valid moves is precisely the circuits of the given LP.  When we consider moving between integral solutions for problems whose integral solutions are all in $\{0,1\}^n$, this necessitates that only 0/1 circuits are used, even when others exist. 

  These questions can be seen as a variation of the study of circuit diameters \cite{bdfm-18,bsy-18,kps-19}, where we seek to move between any two extreme-point solutions (i.e., not only integral ones), and in each iteration we are allowed to take a maximal-length step in the direction of any circuit.   
 For many LPs from combinatorial optimization, all extreme-point solutions are in $\{0,1\}^n$, and all solutions in $\{0,1\}^n$ are extreme-point solutions.  In this setting, the number of moves in the transformations we consider gives an upper bound on the circuit diameter.

In contrast, in view of circuit augmentation, a restriction of the analysis to 0/1 circuits or any subset of the set of circuits would have some natural limitations. First, generic methods for solving LPs via circuit augmentation may utilize circuits which are not 0/1, even if there was a monotone walk just using 0/1 circuits. In fact, many theoretical results bounding the running time of generic circuit augmentation algorithms on general LPs \textit{require} that the full set of circuits be available as augmentation directions \cite{dknv22, dhl-15,dks-19}. Relatedly, in combinatorial optimization we are typically interested in moving only between the integral solutions of an integer program. However, circuit augmentation — whether along general circuits or even along 0/1 circuits — may pass through non-integral solutions of the corresponding LP relaxation. There are several interesting open questions coming out of these challenges. We point out some of them in our final remarks in Section \ref{sec:concl}.

Next, in Section \ref{sec:prelim}, we provide some necessary background and definitions on circuits and circuit imbalance measure. We then summarize our contributions and provide an outline of the paper in Section \ref{sec:contributions}.

\subsection{Preliminaries}\label{sec:prelim}
We now provide some formal terminology and then outline our contributions. Given a set $X$, a vector $\vex\in\R^X$, and $i\in X$, we use $\vex(i)$ to denote the component of $\vex$ indexed by $i$.

\begin{definition}
    The characteristic vector of a subset $T$ of a set $S$ is the vector $\ve{X}_T$:= $(x_s)_{s \in S}$ such that $x(s)=1$ if $s \in T$ and $x(s)=0$ if $s \notin T$ . 
\end{definition}

 We follow \cite{bfh-14,bv-17,dhl-15,env-21,r-69} for some background on the theory of circuits and circuit augmentation. Recall that a set of \textit{circuits} for a polyhedron can be defined as follows.\begin{definition}[Circuits]\label{def:circuits}
Given matrices $A\in\R^{m_A\times n}$ and $B\in\R^{m_B\times n}$, the {\bf set of circuits} corresponding to $A$ and $B$, denoted $\mathcal{C}(A,B)$, consists of those $\veg \in ker(A)\setminus \{\veo\}$, normalized to coprime integer components, for which $B\veg$ is support minimal over the set of $\{B\vex:\vex\in ker(A) \setminus \{\veo\}\}$.  When $A$ is empty (in which case $\ker(A)=\R^n$), we denote the corresponding set of circuits by $\mathcal{C}(0,B)$.
\end{definition}

Given a polyhedron $P$ described by the system of equalities and inequalities $P=\{\vex\in\R^n:A\vex = \veb, B\vex\leq \ved\}$, we say that the circuits $\mathcal{C}(A,B)$ are the circuits of this formulation of $P$. In the studies of circuit augmentation and diameters over a polyhedron $P$, it often is necessary to restrict to the feasible circuit directions at some point $\vex$ in $P$, i.e., those directions in which a nonzero step starting at $\vex$ remains in $P$.

\begin{definition}
    Given a point $\vex$ in a polyhedron $P=\{\vex:A\vex=\veb,B\vex\leq \ved\}$, we say a {\bf circuit $\veg\in\mathcal{C}(A,B)$ is feasible at $\vex$} if there exists $\epsilon>0$ such that $\vex+\epsilon\veg\in P$.
\end{definition}

This leads to the notion of a {\em circuit walk} where one moves between solutions in $P$ using feasible circuits.
\begin{definition}\label{defn:cw}
    Let $P = \{\vex\in \mathbb{R}^m: A\vex = \veb, B\vex \leq \ved\}$ be a polyhedron. Starting at an extreme-point $\vev$ of $P$, we call a sequence of $\vev = \vex_0,...,\vex_k$ a {\bf circuit walk} of length $k$ if for $i = 0,...,k-1$:
 \begin{enumerate}
     \item $\vex_i \in P$
     \item $\vex_{i+1} = \vex_i +\epsilon_i \veg_i$ for some $\veg_i \in \mathcal{C}(A,B)$ and $\epsilon_i > 0$ and
     \item $\vex_i+\epsilon_i \veg_i$ is infeasible for all $\epsilon > \epsilon_i$.
 \end{enumerate}
\end{definition}
\noindent This definition assumes that feasible circuits are used for steps of maximal length, and that the walk begins at an extreme-point $\vev = \vex_0$. We do not require that the walk terminate at an extreme-point $\vex_k$.

In general, different formulations of the exact same polyhedron $P$ can give rise to different sets of circuits.  However, as shown in \cite{k-22}, all so-called \textit{minimal} descriptions of a polyhedron give rise to the same set of circuits.  Here, a minimal description is defined as follows.

\begin{definition}[See e.g.~\cite{ccps-97}]\label{defn:minimal}
Given a polyhedron $P$ (considered as a set), we say that $\{\vex\in\R^n:A\vex\leq \veb, B\vex\leq \ved\}$ is a \textbf{minimal description of $P$} if it satisfies the following:

\begin{enumerate}
	\item $P = \{\vex\in\R^n:A\vex = \veb, B\vex\leq \ved\}$
	\item No inequality of $\{\vex\in\R^n:A\vex = \veb, B\vex\leq \ved\}$ can be made an equality without changing the set. 
	\item No inequality or equality of $\{\vex\in\R^n:A\vex = \veb, B\vex\leq \ved\}$ can be omitted without changing the set.
\end{enumerate}
\end{definition}

All valid formulations of $P$ must contain the circuits of a minimal description of $P$. Thus, when $\{\vex\in\R^n:A\vex = \veb, B\vex\leq \ved\}$ is a minimal description of a polyhedron $P$, we can meaningfully say that $\mathcal{C}(A,B)$ are \textit{the circuits of $P$}. 
We can define the circuits of $P$ equivalently as the set of all directions that can be obtained as edges of any polyhedron of the form $P'=\{\vex\in\R^n:A\vex = \veb', B\vex\leq \ved'\}$ as the right-hand sides $\veb'$ and $\ved'$ vary \cite{g-75}. As such, the circuits of a polyhedron include its edge directions. This latter fact holds even when one is given a non-minimal description.  That is, for \textit{any} description $P=\{\vex\in\R^n:A\vex = \veb, B\vex\leq \ved\}$ of a polyhedron $P$, (appropriate normalizations of) the edge directions of $P$ are in $\mathcal{C}(A,B)$. We now formally define the circuit imbalance measure, a measure on how complex the set of circuits can be. 

\begin{definition}\label{def:imba}
    Given matrices $A$ and $B$, the \textbf{circuit imbalance measure} of $A$ and $B$ is defined to be
    \[
    \kappa(A,B) = \max \left\{\left|\frac{\veg(i)}{\veg(j)}\right|:\veg\in\mathcal{C}(A,B)\right\}.
    \]
\end{definition}

We note that in some context, it may be more natural to define $\kappa(A,B)$ to be the maximum possible value of $\left|\frac{(B\veg)(i)}{(B\veg)(j)}\right|$ over all circuits $\veg$.  However, as we are in part concerned with the relationship between the circuit imbalance and the \textit{interpretability} of circuits, here it is more natural to consider the components of the circuits themselves.  

 \subsection{Contributions}\label{sec:contributions}

We now have the basic terminology needed to describe our contributions. As discussed previously, these are of two types: we begin with a study of the circuit imbalance measure for generic graph-theoretic constraint matrices in Section \ref{sec:imbal}; then, in Sections \ref{sec:col} and \ref{sec:tree}, we study two specific, classic problems where we are able to devise new insight from their 0/1 circuits, despite those problems having an exponential imbalance.

First, consider a simple, loop-free graph $G=(V,E)$ and $\mh\subseteq 2^E$, and let $B'=B'(G,\mh)$ be the matrix whose rows are given by $\ve{X}(H)$ for all $H\in\mh$, where $\ve{X}(H)$ is the characteristic vector of $H$.  Let 
\[
B=B(G,\mh)=
\begin{bmatrix}
B'\\
I
\end{bmatrix},
\]
and let $\kappa(\mh)=\max_G\{\kappa(0,B)\}$.
Note that this a very general framing that captures many well-studied constraint matrices from combinatorial optimization (for appropriate choices of $\mh$).
It is known, for example, that if
\[
\mh = \{\delta(v):v\in V\},
\]
then $B$ is the inequality constraint matrix of the standard formulation of the fractional matching polytope, and the circuit imbalance measure of that matrix at most 2 (see, e.g., \cite{dks-19}).

In Section \ref{sec:imbal}, we consider 
the following natural question: 
\begin{question}\label{q:sub-exponential}
Can simple, small sets $\mh$ lead to exponential $\kappa(\mh)$?

\end{question}
A sub-exponential circuit imbalance comes with several advantages. In particular, known bounds on circuit diameters become stronger \cite{dknv22} and, more generally, a partial (or complete) characterization of the set of circuits becomes more attainable. 
However, we will show that many families of constraints (both structurally-simple and small in size) already lead to exponential circuit imbalance. 
Consider for example the set 
\[
\mh = \{\delta(v):v\in V\}\cup\{E[S]:S\subseteq V, |S|\text{ odd}\},
\]
where, for $S\subseteq V$ we let $E[S]=\{uv\in E:u,v\in S\}$. In this case, $B(G,\mh)$ is the inequality constraint matrix of the standard formulation of the matching polytope, and we show that its circuit imbalance measure is at least exponential in the number of edges. 

In particular, in Theorem \ref{thm:induce_imbalance} we show that the inclusion of just a polynomial-sized subset of $\{E[S]:S\subseteq V,|S|\text{ odd}\}$ is already enough to guarantee at least exponential circuit imbalance. In the same theorem, we show the same holds if $\mh$ contains all paths of length 4 or if $\mh$ contains all sets of size 3.
We strengthen these last two conclusions with a new construction in Theorem \ref{thm:path_imbalance} and show that even if $\mh$ only contains all paths of length 3, then $\kappa(\mh)$ is at least exponential.  Finally, in Theorems \ref{thm:cycle_imbalance} and \ref{thm:star_imbalance}, we get at least exponential circuit imbalance when $\mh$ contains all cycles of length 4 or all star subgraphs, respectively.
 
These results suggest that for many simple and seemingly natural candidates for $\mh$, we get exponential circuit imbalance, in turn suggesting that collections $\mh$ that give low circuit imbalance for \textit{all} graphs may be rare. We see this exponential circuit imbalance for the classic families of polytopes studied in Sections \ref{sec:col} and \ref{sec:tree} (Lemma \ref{lem:unbounded_imbalance} and Corollary \ref{cor:rank_imbalance}, respectively), which adds to our motivation for a study of their 0/1 circuits.

In Section \ref{sec:col}, we study the relationship between the 0/1 circuits of the fractional coloring polytope and classic Kempe dynamics. We consider the following well-known representation of the fractional coloring problem.
\begin{definition}
Let $G = (V, E)$ be a graph and $\tau = \{ t_1, \ldots, t_r \}$ a set of colors. The {\bf fractional coloring problem} has the following formulation:
\begin{align*}
    \vex \in \R ^{|V| \times r}: \sum_{i \in K} \vex({v, i}) = &  \;\; 1  \;\;\ \forall v \in V \\
    \vex({v,i}) + \vex({u, i}) \leq & \;\; 1  \;\;\ \forall i \in \tau, \, uv \in E \\ 
    \vex({v,i}) \geq &  \;\; 0  \;\;\ \forall v \in V, \forall i\in\tau
\end{align*}
\end{definition}

Using this formulation, in Theorem \ref{thm:colcir} we are able to characterize the circumstances under which (the extreme-points corresponding to) two proper colorings differ by a circuit. A consequence of Theorem \ref{thm:colcir} is that the
Kempe chains of the underlying reconfiguration graph correspond to some but not all of the 0/1 circuits of the fractional coloring polytope.  This correspondence allows us to reinterpret Kempe feasibility in terms of circuit walks. Here, and in the literature, Kempe feasibility or Kempe equivalence refers to the ability to transform one proper coloring into another by a sequence of Kempe swaps, i.e., an exchange of two colors on a minimal number of vertices subject to the coloring remaining proper.

In the polyhedral setting, this is analogous to reachability between extreme-points of the integer coloring polytope via edge walks. However, it is well known that Kempe swaps do not suffice to move between any pair of colorings (i.e., reachability can fail); for instance, in the case of the 3-colorings of the triangular prism. 

In fact, in Theorem \ref{thm:colcir}, we \textit{fully} characterize the 0/1 circuits of the fractional coloring polytope.  In particular, we show that the the set of 0/1 circuits of the fractional coloring polytope includes vectors corresponding to a \textit{generalization} of Kempe swaps wherein they can involve more than just two colors.
We then show in Theorem \ref{thm:coloring-walks} that all extreme-points of the fractional coloring polytope which correspond to proper colorings are reachable from each other via circuit walks using this subset of the 0/1 circuits.  That is to say, while Kempe swaps do not suffice to achieve reachability between all proper colorings, the generalized Kempe swaps corresponding to these 0/1 circuits \textit{do} suffice to achieve reachability.

On the other hand, in contrast to some combinatorial polytopes (like TSP and Matching discussed earlier), we show that the 0/1 circuits of the fractional coloring polytope are not enough to guarantee the existence of constant-length circuit walks between extreme-points corresponding to proper colorings (if those walks are required to visit only integral extreme-points, i.e., proper colorings).  In particular, we show in Theorem \ref{thm:long_proper_walk} that for a graph on $n$ vertices, $n$ steps can be necessary (and that $n$ steps suffices is trivial).
Finally, in Section \ref{sec:kempe_imbal}, we provide an explicit construction showing that the circuit imbalance of the coloring polytope is at least exponential, further supporting the restriction of our attention to 0/1 circuits.

In Section \ref{sec:tree} we study the maximum weight forest polytope, a polytope associated with a graphic matroid. A major difference between this polytope and the coloring polytope discussed in Section \ref{sec:col} is that it has an exponential number of constraints, which leads to some interesting differences in our approach and results. 

\begin{definition}
    Let $G = (V,E)$ be a simple graph. The {\bf maximum weight forest problem} has a feasible set that corresponds to the extreme-points of the polytope given by the following system of constraints:
    \begin{equation*}
\begin{array}{rcrcllr}
   \vex \in \R^{|E|}:  &&\sum_{e \in E(G[U])}\vex(e) &\leq& |U|-1 &\quad \forall \emptyset \neq U \subseteq V&\\
    && \vex(e) &\geq& 0 &\quad \forall e \in E,& \tag{MWF}
\end{array}
\end{equation*}
where $G[U]$ is the subgraph of $G$ induced by $U \subseteq V$.
\end{definition}

\noindent In the constraint matrix of (\ref{MWF}), there are duplicated rows in that the matrix rows for the constraints $\vex(e) \geq 0$ appear also in the rank inequality constraints ($\sum_{e \in E(G[U])}\vex(e) \leq |U|-1$) for sets $U=\{u,v\}$ with $uv \in E$. For the characterization of circuits or non-circuits, it suffices to consider the simpler system
 \begin{equation*}
\begin{array}{rcrclcr}
    &&\sum_{e \in E(G[U])}\vex(e) &\leq& |U|-1 &\quad \forall \emptyset \neq U \subseteq V.& \tag{Rank}
\end{array}
\end{equation*}

\noindent First, we note that the circuit imbalance of (\ref{Rank}) is exponential; this follows from one of the results presented in Section \ref{sec:imbal}. In combination with the exponential number of constraints,  a full and interpretable characterization of the whole set of  circuits of (\ref{Rank}) is unlikely, and we dedicate the section to studying the 0/1 circuits of (\ref{Rank}) and walks along them within (\ref{MWF}).  

In Section \ref{sec:simplerank}, we begin by devising a number of properties that 0/1 circuits or non-circuits must satisfy. First, we show in Lemma \ref{lem:unitvector} that unit vectors are circuits of (\ref{Rank}) and that no other \emph{uniform-sign} 0/1 vectors (i.e., 0/1 vectors
where all nonzero entries have the same sign) are circuits. This allows us to frame the main discussion as that of \emph{mixed-sign vectors} which have at least one positive and one negative entry. As a key tool to this end, we introduce the idea of \emph{balanced vertex sets}, where the values of induced edges ``cancel out'', and \emph{imbalanced vertex sets}, where they do not.

 Recall that, to check whether a given 0/1 vector is a circuit, we have to check whether there exists a vector of smaller, i.e., strictly included support with respect to (\ref{Rank}). 
 If a single edge could be dropped to arrive at such a vector, then that edge is not in a balanced set, and vice versa (Lemma \ref{lem:singleedgedrop}). 
 This implies that all edges in the support of a circuit lie in \emph{some} balanced set. As the number of vertex subsets is exponential in the size of $V$, the property of an edge \emph{not} being in a balanced set is very restrictive. In Lemma \ref{lem:discon_circs}, this allows us to prove that many mixed-sign 0/1 vectors with disconnected edge sets are circuits.
 We conclude this first subsection by proving that ``alternating" (and connected) graph structures (reminiscent of alternating paths and cycles in the setting of matchings) correspond to 0/1 circuits of (\ref{Rank}), as do certain generalizations of these structure; see Lemmas \ref{lem:alts}, \ref{lem:root_cyc}, and \ref{lem:pseudo}). These structures are of interest due to their connection to classical combinatorial algorithms; see for example \cite{bm-23}. 

In Section \ref{sec:mixedsign}, we prove a set of more general properties that characterize mixed-sign 0/1 vectors as \emph{non}-circuits. Notably, we show that for 0/1 non-circuits, either it is possible to drop a single edge to obtain a smaller support (Lemma \ref{lem:singleedge} and Corollary \ref{cor:unitvectorcontainment}), or the the support of the non-circuit \textit{1)} has to be the disjoint union of edge sets which induce connected graphs of diameter at most $5$, and \textit{2)} has to induce a connected graph itself. Thus, in this latter case, the support of a 0/1 non-circuit must be of low diameter itself (Theorems \ref{thm:diameter} and \ref{thm:notacircuit}). This implies that, outside of special cases, 0/1 vectors with an edge set of diameter greater than $10$ must be circuits.

While we do not have, and did not expect, a full characterization of 0/1 circuits for (\ref{Rank}), the insight gained allows us to prove surprisingly strong circuit diameter bounds for (\ref{MWF}) in Section \ref{sec:rankddiameters}. For general graphs, we show a constant upper bound of $9$ (Theorem \ref{thm:upperbound9}), which improves to $7$ for complete graphs (Theorem \ref{thm:upperbound7}). We conclude by proving a lower bound of $3$ (and for most graphs, $4$) in Theorem \ref{thm:low_diam_bnd}. These results, where diameters range between $3$ and $9$ (or in special cases, between $4$ and $7$), contrast with diameter upper bounds of $1$ or $2$ in the literature for problems such as TSP and matching \cite{kps-19}.

Finally, in Section \ref{sec:concl} we conclude with some remarks and discuss some natural directions of future work.

\section{Exponential Circuit Imbalance from Graph Problems}
\label{sec:imbal}

In this section we explore a variety of seemingly-natural sets of constraints defined over the edges of a graph whose inclusion in a linear system guarantees it to have at least exponential circuit imbalance.
Our first theorem (Theorem \ref{thm:induce_imbalance}) shows that even a small collection (i.e., polynomially sized) of relatively ``simple" constraints is enough to guarantee exponential circuit imbalance.  Many LPs from combinatorial optimization have a number of constraints which are exponential in the number of variables. Since, in general, the number of circuits grows exponentially in the number of constraints, it is not surprising to find that such LPs often also have exponential circuit imbalance measure.  This holds, for example, for the Matching LP. A particularly surprising consequence of Theorem \ref{thm:induce_imbalance} is that only a polynomially sized subset of the constraints of the Matching LP (i.e., those described by point 1. in the theorem) are necessary to already give exponential circuit imbalance.  We go on to show analogous results for other structurally simple, polynomially sized sets of constraints.  In our view, this underscores a general difficulty of characterizing \textit{all} circuits of LPs from combinatorial optimization, and motivates a focus on a restricted attention to just the 0/1 circuits (or, in general, a subset of circuits which is interpretable, characterizable, and sufficient to be useful).

Before we state and prove Theorem \ref{thm:induce_imbalance}, we recall a useful notation and fact of which we will make use in the remainder of this section.
We will use Landau notation to describe the circuit imbalance. Recall the following definition. 

\begin{definition}
    Let $f,g : \Z^+ \rightarrow \R^+$.We say that $f(n)$ is $\Omega(g(n))$ (or $f(n) \in \Omega(g(n)))$ if there exists a real constant $c > 0$ and an integer constant $n_0 \geq 1$ such that $f(n) \geq c \cdot g(n)$ for every integer $n \geq n_0$.
\end{definition}
The following simple observation on a linear system with four variables will be used repeatedly in our proofs.

\begin{equation}\label{eq:all_half}
    \text{If }b+c=a, b+d=a,\text{ and }c+d=a, \text{ then }b=c=d=\frac{1}{2}a.
\end{equation}

We now state Theorem \ref{thm:induce_imbalance} Recall that $\mh$ denotes the supports of the rows of an inequality constraint matrix, as defined in Section \ref{sec:contributions}.
\begin{theorem}\label{thm:induce_imbalance}
Let $G=(V,E)$ be a simple, loop-free graph and $\mh\subseteq 2^E$. We have that $\kappa(\mh) \in \Omega(2^{|E|})$ whenever:
\begin{enumerate}
    \item $\mh \supseteq \{E[U]:U\subseteq V, |U|=5\}$, 
    \item $\mh\supseteq\{H:H \ \text{is the edge set of a path}, |H|=4\}$, 
    \item $\mh \supseteq\{H:|H|=3\}$, 
\end{enumerate}
\end{theorem}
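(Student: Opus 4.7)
The plan is to construct, for each of the three cases, a family of graphs $G_n$ with $|E(G_n)| = O(n)$ together with a circuit $\veg_n \in \mathcal{C}(0,B(G_n,\mh))$ whose ratio of maximum-to-minimum nonzero entries is $2^{\Omega(n)}$. The fundamental building block is a \emph{halving gadget} inspired by equation~(\ref{eq:all_half}): four distinguished ``active'' edges $e_a, e_b, e_c, e_d$ together with three sets $H_1, H_2, H_3 \in \mh$ satisfying
\[
H_1 \cap \{e_a,e_b,e_c,e_d\} = \{e_a,e_b,e_c\}, \quad
H_2 \cap \{e_a,e_b,e_c,e_d\} = \{e_a,e_b,e_d\}, \quad
H_3 \cap \{e_a,e_b,e_c,e_d\} = \{e_a,e_c,e_d\},
\]
with every edge in $H_j \setminus \{e_a,e_b,e_c,e_d\}$ lying outside $\supp(\veg_n)$. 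Assigning $\veg_n(e_a)=\alpha$ and $\veg_n(e_b)=\veg_n(e_c)=\veg_n(e_d)=-\alpha/2$ then forces all three corresponding rows of $B\veg_n$ to vanish, by~(\ref{eq:all_half}).

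To obtain exponential imbalance, I would chain $n$ such gadgets by identifying the ``output'' edge $e_d^{(k)}$ of gadget $k$ with the ``input'' edge $e_a^{(k+1)}$ of gadget $k+1$; each identification halves (and flips the sign of) the value carried along the chain, so the ratio of entries of $\veg_n$ between $e_a^{(0)}$ and $e_d^{(n-1)}$ is $2^n$. Since each gadget contributes $O(1)$ new edges, $|E(G_n)| = O(n)$, yielding circuit imbalance exponential in $|E(G_n)|$.

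The three cases differ in how the gadget rows must be realized inside $\mh$. In case~3, any three edges form a valid $H \in \mh$, so the three rows can be taken literally to be $\{e_a,e_b,e_c\}$, $\{e_a,e_b,e_d\}$, $\{e_a,e_c,e_d\}$ and no auxiliary construction is required. In case~2, each $H_j$ must be the edge set of a 4-path; for each gadget I would attach a short path of auxiliary edges outside $\supp(\veg_n)$ so that each of the three required 4-paths exists with the prescribed intersection with the active edges. In case~1, each $H$ must be $E[U]$ for a 5-vertex set $U$; here I would arrange the 4 active edges of each gadget so that three suitably chosen 5-vertex subsets $U_1,U_2,U_3$ of $V(G_n)$ induce the desired 3-subsets on the active edges, with any additional induced edges placed outside $\supp(\veg_n)$.

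The main obstacle will be verifying that the constructed $\veg_n$ is genuinely a circuit, not merely a nonzero vector with the right entry ratios. Because $B$ contains the identity block, any $\vew$ with $\supp(B\vew) \subsetneq \supp(B\veg_n)$ must satisfy $\supp(\vew) \subseteq \supp(\veg_n)$; the essential question is whether the $3n$ vanishing rows from the chained gadgets, viewed as linear equations on $\vew$ restricted to $\supp(\veg_n)$, pin $\vew$ down to a 1-dimensional subspace. I expect the chain structure to make these $3n$ equations linearly independent on the $3n+1$ coordinates of $\supp(\veg_n)$, exactly cutting out $\R\veg_n$; any ``unintended'' vanishing rows of $B'$ on $\veg_n$ are then automatically consistent with this line and cause no issue. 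The delicate point is ruling out such unintended vanishing rows from spoiling the argument in cases~1 and~2, which I would handle by spacing the gadgets out with isolating vertices so that the only $H \in \mh$ wholly supported within the active edges of the chain are the intended $3n$ sets.
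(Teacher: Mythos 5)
Your construction is essentially the paper's: the same halving gadget derived from (\ref{eq:all_half}) --- three constraint supports meeting four active edges in the three $3$-subsets containing the input edge --- chained so that each gadget halves (and flips the sign of) the carried value, with case 3 realized by the literal $3$-sets and cases 1 and 2 realized by the same intersections using zero-valued auxiliary edges (the paper's explicit gadget is a diamond $t_i,u_i,v_i,w_i$ with two zero chords $t_iu_i,t_iv_i$, which is exactly the shape your sketch for cases 1--2 is forced into, since the prescribed $3$-subsets must span at most five vertices, respectively extend to a $4$-path via a single auxiliary edge). The one organizational difference is the step you flag as the main obstacle: you propose to prove that $\veg_n$ is itself a circuit by showing the $3n$ gadget equations have rank $3n$ on the $3n+1$ support coordinates and so cut out exactly $\R\veg_n$; this is correct, and your worry about unintended vanishing rows is, as you yourself note, moot once the solution set is pinned to that line, so no extra isolating vertices are needed. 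The paper avoids this step entirely: it takes an arbitrary circuit $\veh$ with $\supp(B\veh)\subseteq\supp(B\veg)$ (which always exists) and shows the vanishing rows alone force $|\veh(\cdot)|$ to halve along the chain, so whichever circuit one lands on already has the exponential imbalance --- a slightly lighter argument resting on the same linear algebra, whereas your version buys the stronger (but unneeded) conclusion that the constructed vector is a circuit.
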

\begin{proof}
    We first prove conclusion 1. explicitly.  As we will see later, points 2. and 3. follow from the fact that such collections of edge sets give rise to the same relationships between the same variables.
    
    Consider the graph $G_k$ with vertex set $V=\{w_0\}\cup\{t_i,u_i,v_i,w_i:1\leq i\leq k\}$ and edge set $E=\{t_iu_i, t_iv_i, v_iw_i, u_iw_i:1\leq i\leq k\}\cup \{w_it_{i+1}:0\leq i\leq k-1\}$. 

    Let $\veg$ be such that $\veg(w_it_{i+1})=(-2)^{-i}$ for all $0\leq i\leq k-1$, $\veg(t_iu_i)=\veg(t_iv_i)=0$ for all $1\leq i\leq k$, and $\veg(u_iw_i)=\veg(v_iw_i)=(-2)^{-i}$. See Figure \ref{fig:induce_imbalance} for such a construction of $\veg$. 

    There exists a circuit $\veh$ with $\supp(B\veh)\subseteq\supp(B\veg)$.  We will show that in any such circuit $\veh$, $|\veh(w_it_{i+1})| = 2|\veh(w_{i+1}t_{i+2})|$ for all $0\leq i\leq k-2$.  We have that for all $1\leq i\leq k-1$,
    \begin{align}
    \veg(E[\{w_{i-1},t_i,u_i,v_i,w_i\}])=0,\label{eq:sum1}&\\
    \veg(E[\{w_{i-1},t_i,u_i,w_i,t_{i+1}\}])=0,\label{eq:sum2}&\text{ and}\\
    \veg(E[\{w_{i-1},t_i,v_i,w_i,t_{i+1}\}])=0.\label{eq:sum3}&
    \end{align}
    Since $\supp(B\veh)\subseteq\supp(B\veg)$, we also have that for all $1\leq i\leq k-1$,
     \begin{align*}
    \veh(E[\{w_{i-1},t_i,u_i,v_i,w_i\}])=0,&\\
    \veh(E[\{w_{i-1},t_i,u_i,w_i,t_{i+1}\}])=0,&\text{ and}\\
    \veh(E[\{w_{i-1},t_i,v_i,w_i,t_{i+1}\}])=0.&
    \end{align*}

    If for all $1\leq i\leq k-1$  we let $a=-\veh(w_{i-1}t_i)$, the above implies that
    
     \begin{alignat*}{6}
    \veh(u_iw_i) &+& \veh(v_iw_i) & &              &= a,& \\
                 & & \veh(v_iw_i) &+& \veh(w_it_{i+1}) &= a,& \text{ and}\\
    \veh(u_iw_i) & &              &+& \veh(w_it_{i+1}) &= a.
    \end{alignat*}

    By (\ref{eq:all_half}), $\veh(u_iw_i)=\veh(v_iw_i)=\veh(w_it_{i+1})=\frac{1}{2}a$.  It follows that $|\veh(w_{i-1}t_i)|=2|\veh(w_it_{i+1})|$ for all $1\leq i\leq k-1$.  Thus, $\kappa(\mh)\in \Omega(2^{|E|})$, as desired.

    Finally, we observe that in the above construction, the exact same relations between $\veh(u_iw_i), \veh(v_iw_i)$, and $\veh(w_it_{i+1})$ can be achieved if $\mh\supseteq\{H: H\text{ is the edge set of a path}, |H|=4\}$ or if $\mh \supseteq\{H:|H|=3\}$, giving the desired results. In particular, consider the edge sets $E[\{w_{i-1},t_i,u_i,v_i,w_i\}]$, $E[\{w_{i-1},t_i,u_i,w_i,t_{i+1}\}]$, and $E[\{w_{i-1},t_i,v_i,w_i,t_{i+1}\}]$ utilized in equations (\ref{eq:sum1}), (\ref{eq:sum2}), and (\ref{eq:sum3}), respectively.  Denote by $S$ the set 
    \[E[\{w_{i-1},t_i,u_i,v_i,w_i\}]\cap \supp(\veg) = \{w_{i-1}t_i,u_iw_i,w_iv_i\}.\] 
    
    If we let $P$ denote the path $\{w_{i-1}t_i,t_iu_i,u_iw_i,w_iv_i\}$, then we have that $P\cap \supp(\veg)=S$ and $|P|=4$.  Thus, the equation (\ref{eq:sum1}) holds in the case that $\mh\supseteq\{H:H \ \text{is the edge set of a path}, |H|=4\}$.  Analogous paths exist for equations (\ref{eq:sum2}) and (\ref{eq:sum3}).  Thus, in this case we infer the same relationships between the same variables, as desired.

    Likewise, the set $S$ defined above is of size 3, and of course $S\cap\supp(\veg)=S$.  Thus, the equation (\ref{eq:sum1}) holds in the case that $\mh \supseteq\{H:|H|=3\}$.  Analogous inferences hold for (\ref{eq:sum2}) and (\ref{eq:sum3}).  Thus, in this case we again infer the same relationships between the same variables, as desired.
\end{proof}
\begin{figure}[htb]
        \centering
        \begin{tikzpicture}
            \tikzset{vertex/.style = {shape=circle,draw,minimum size=2em}}
            \node[vertex] (w0) at (0,2) {$w_0$};
            
            \node[vertex] (t1) at (2,0) {$t_1$};
            \node[vertex] (u1) at (2,2) {$u_1$};
            \node[vertex] (v1) at (4,0) {$v_1$};
            \node[vertex] (w1) at (4,2) {$w_1$};

            \node[vertex] (t2) at (6,0) {$t_2$};
            \node[vertex] (u2) at (6,2) {$u_2$};
            \node[vertex] (v2) at (8,0) {$v_2$};
            \node[vertex] (w2) at (8,2) {$w_2$};

            \node[vertex] (t3) at (10,0) {$t_3$};

             \node at (11,1) {\ldots};

            \draw[thick] (w0) to node[midway, fill=white] {$1$} (t1) {};

            \draw[thick,dashed] (t1) to (u1) {};
            \draw[thick,dashed] (t1) to (v1) {};
            \draw[thick] (v1) to node[midway, fill=white] {$-\frac{1}{2}$} (w1) {};
            \draw[thick] (u1) to node[midway, fill=white] {$-\frac{1}{2}$} (w1) {};
            \draw[thick] (w1) to node[midway, fill=white] {$-\frac{1}{2}$} (t2) {};

            \draw[thick,dashed] (t2) to (u2) {};
            \draw[thick,dashed] (t2) to (v2) {};
            \draw[thick] (v2) to node[midway, fill=white] {$\frac{1}{4}$} (w2) {};
            \draw[thick] (u2) to node[midway, fill=white] {$\frac{1}{4}$} (w2) {};
            \draw[thick] (w2) to node[midway, fill=white] {$\frac{1}{4}$} (t3) {};

            \end{tikzpicture}
            \caption{The construction of the vector $\veg$ in the proof of Theorem \ref{thm:induce_imbalance}. Each edge is labeled with its corresponding value in $\veg$, where dashed lines indicate a value of 0.}
            \label{fig:induce_imbalance}
    \end{figure}
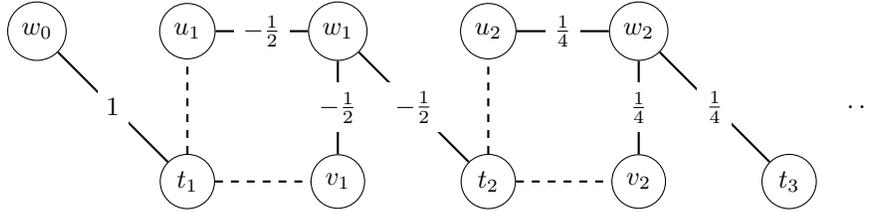

In fact, we can show that we get at least exponential circuit imbalance if we consider only paths of size 3.

\begin{theorem}\label{thm:path_imbalance}
    If $\mh\supseteq\{H:\text{$H$ is the edge set of a path}, |H|=3\}$, then $\kappa(\mh) \in \Omega(2^{|E|})$.
\end{theorem}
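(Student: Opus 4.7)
The plan is to construct, for each $k$, a graph $G_k$ with $|E(G_k)| = 5k$ and a circuit $\veg \in \mathcal{C}(0, B(G_k, \mh))$ whose component ratio reaches $2^k$. The gadget I use is $K_4$ minus a single edge: on vertices $\{a_i, b_i, c_i, d_i\}$ take the edges $\{a_ib_i, a_ic_i, b_ic_i, b_id_i, c_id_i\}$, and set $\veg$ to have value $p_i = (-2)^{i-1}$ on the ``triangle'' edges $a_ib_i, a_ic_i, b_ic_i$ and value $-2 p_i$ on the ``tail'' edges $b_id_i, c_id_i$. To form $G_k$, I chain $k$ copies of this gadget by identifying $d_i$ with $a_{i+1}$ for $1 \leq i < k$.

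Within a single gadget, the graph $K_4 - a_id_i$ has six Hamiltonian (hence 3-edge) paths, of which exactly four --- those using only one of the two tail edges --- have $\veg$-sum $p_i + p_i - 2p_i = 0$. Two of the resulting equations on $\veh$ immediately give $\veh(b_id_i) = \veh(c_id_i)$; setting $a = -\veh(b_id_i)$, the other three equations assert that each pair of values in $\{\veh(a_ib_i), \veh(a_ic_i), \veh(b_ic_i)\}$ sums to $a$. The identity (\ref{eq:all_half}) then forces all three triangle values to equal $a/2$, whence $\veh(b_id_i) = \veh(c_id_i) = -2\,\veh(a_ib_i)$, so inside each gadget $\veh$ is a scalar multiple of $\veg$. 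Across gadgets, a case analysis at each shared vertex $d_i = a_{i+1}$ (of degree $4$) shows that the 3-edge paths of vanishing $\veg$-sum that cross a gadget boundary are exactly those of the form $v$-$d_i$-$v'$-$d_{i+1}$ with $v \in \{b_i, c_i\}$ and $v' \in \{b_{i+1}, c_{i+1}\}$; each such equation propagates the doubling $\veh(a_{i+1}b_{i+1}) = -2\,\veh(a_ib_i)$ from one gadget to the next.

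The main obstacle is the bookkeeping required to certify that $\veg$ is in fact a circuit, i.e.\ that no additional 3-edge path in $G_k$ has vanishing $\veg$-sum in a way that imposes a contradictory constraint. I would handle this by case-splitting on the vertex types of $G_k$ --- $a_1$ and $d_k$ have degree $2$, each $b_i, c_i$ has degree $3$, and each shared $d_i$ has degree $4$ --- and verifying that every remaining 3-edge path has $\veg$-sum $\pm 3 p_i$ or $\pm 6 p_i$, contributing no constraint on $\veh$. Because any 3-edge path has only four vertices while the two consecutive shared vertices $d_i, d_{i+1}$ are non-adjacent in $G_k$ (the edge $a_{i+1} d_{i+1}$ being precisely the one omitted from $K_4$ in gadget $i+1$), no path can span three gadgets, so the analysis is local to consecutive pairs.

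Once this verification is complete, every $\veh$ with $\supp(B\veh) \subseteq \supp(B\veg)$ must be a scalar multiple of $\veg$, so $\veg$ is a circuit. Its entries attain magnitudes $1$ (in gadget $1$) and $2^k$ (in gadget $k$), so $\kappa(\mh) \geq 2^k = 2^{|E|/5} \in \Omega(2^{|E|})$, as claimed.
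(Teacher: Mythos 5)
Your proposal is correct, but it takes a genuinely different route from the paper's. The paper chains a tree-like (caterpillar) gadget on vertices $u_{i-1},t_i,u_i,v_i,w_i$ and, crucially, never proves that its constructed vector $\veg$ is a circuit: it only invokes the existence of \emph{some} circuit $\veh$ with $\supp(B\veh)\subseteq\supp(B\veg)$ and shows that a few zero-sum path constraints, together with (\ref{eq:all_half}), force the doubling $|\veh(u_{i-1}t_i)|=2|\veh(u_it_{i+1})|$, which already yields the exponential imbalance. You instead chain copies of $K_4$ minus an edge and aim to prove that your $\veg$ itself is a circuit. Your within-gadget computation (four zero-sum Hamiltonian paths, two of which give $\veh(b_id_i)=\veh(c_id_i)$, after which (\ref{eq:all_half}) forces the three triangle values to equal $a/2$) and your boundary constraint (the paths $v$--$d_i$--$v'$--$d_{i+1}$, which give $\veh(a_{i+1}b_{i+1})=-2\,\veh(a_ib_i)$) both check out, and together they already pin any $\veh$ with $\supp(B\veh)\subseteq\supp(B\veg)$ down to a scalar multiple of $\veg$. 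What you flag as the main obstacle---verifying that no further 3-edge path has vanishing $\veg$-sum ``in a way that imposes a contradictory constraint''---is not actually needed: any additional zero-sum path is a constraint that $\veg$, and hence every scalar multiple of $\veg$, satisfies automatically, so such rows can only further restrict $\veh$ and can never contradict the conclusion you have already derived. So your argument is essentially complete as sketched (conclude $\veh=\lambda\veg$, hence $\veg$ is a circuit with imbalance $2^{k}$); alternatively, you could drop the circuit-hood claim altogether and argue, as the paper does, about an arbitrary circuit of contained support, which is what makes the paper's write-up shorter. One shared caveat: your bound is $2^{|E|/5}$, which is exponential in $|E|$ but not literally $\Omega(2^{|E|})$; the paper's own construction (ratio $2^{k-1}$ with $|E|=4k$) commits the same abuse of notation, so this is not a defect of your argument relative to theirs.
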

\begin{proof}
    Consider the graph $G_k$ with vertex set $V=\{u_0\}\cup\{t_i,u_i,v_i,w_i:1\leq i\leq k\}$ and edge set $E=\{u_{i-1}t_i,t_iu_i,u_iv_i,v_iw_i\}$.
    Let $\veg$ be such that $\veg(u_it_{i+1})=(-2)^{-i}$ for all $0\leq i\leq k-1$, $\veg(t_iu_i)=\veg(u_iv_i)=(-2)^{-i}$ for all $1\leq i\leq k$, and $\veg(v_iw_i)=(-2)^{-(i-1)}$ for all $1\leq i\leq k$. See Figure \ref{fig:path_imbalance} for such a construction of $\veg$.

    There exists a circuit $\veh$ with $\supp(B\veh)\subseteq \supp(B\veg)$.  We will show that for any such circuit, $|\veh(u_{i-1}t_i)| = 2|\veh(u_it_{i+1})|$ for all $1\leq i\leq k-1$.  We have that for all $1\leq i \leq k-1$

     \begin{align*}
    \veg(\{u_{i-1}t_i,t_iu_i,u_iv_i\})=0,&\\
    \veg(\{t_iu_i,u_iv_i,v_iw_i\})=0,&\\
    \veg(\{w_iv_i,v_iu_i,u_it_{i+1}\})=0,&\text{ and}\\
    \veg(\{u_{i-1}t_i,t_iu_i,u_it_{i+1}\}\})=0.&
    \end{align*}

    Since $\supp(B\veh)\subseteq\supp(B\veg)$, we also have that for all $1\leq i\leq k-1$

    \begin{align}
    \veh(\{u_{i-1}t_i,t_iu_i,u_iv_i\})=0,&\label{e1}\\
    \veh(\{t_iu_i,u_iv_i,v_iw_i\})=0,&\label{e2}\\
    \veh(\{w_iv_i,v_iu_i,u_it_{i+1}\})=0,&\text{ and}\label{e3}\\
    \veh(\{u_{i-1}t_i,t_iu_i,u_it_{i+1}\})=0.\label{e4}&
    \end{align}

    It follows from equations (\ref{e1}) and (\ref{e2}) that for all $1\leq i\leq k$
    \[
    \veh(u_{i-1}t_i) = - (\veh(t_iu_i) + \veh(u_iv_i)) = \veh(v_iw_i)
    \]
    Then, if for all $1\leq i\leq k-1$ we let $a=-\veh(u_{i-1}t_i)=-\veh(v_iw_i)$, it follows from equations (\ref{e2}), (\ref{e3}), and (\ref{e4}) that
    \begin{alignat*}{6}
    \veh(t_iu_i) &+& \veh(u_iv_i) & &              &= a,& \\
                 & & \veh(u_iv_i) &+& \veh(u_it_{i+1}) &= a,& \text{ and}\\
    \veh(t_iu_i) & &              &+& \veh(u_it_{i+1}) &= a.
    \end{alignat*}
    By (\ref{eq:all_half}), $\veh(t_iu_i)=\veh(u_iv_i)=\veh(u_it_{i+1})=\frac{1}{2}a$.  It follows that $|\veh(u_{i-1}t_i)|=2|\veh(u_it_{i+1})|$ for all $1\leq i\leq k-1$.  Thus, $\kappa(\mh)\geq\Omega(2^{|E|})$, as desired.
\end{proof}
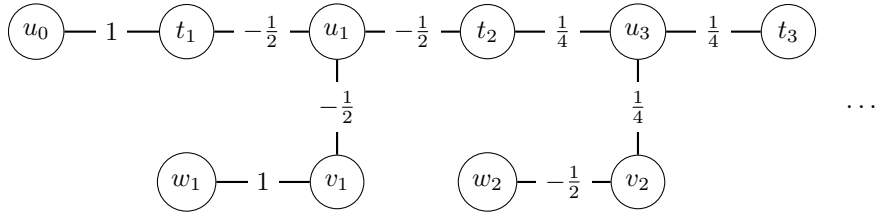
\begin{figure}[htb]
        \centering
        \begin{tikzpicture}
            \tikzset{vertex/.style = {shape=circle,draw,minimum size=2em}}
            \node[vertex] (u0) at (0,2) {$u_0$};
            \node[vertex] (v1) at (2,0) {$w_1$};
            \node[vertex] (u1) at (2,2) {$t_1$};
            \node[vertex] (v2) at (4,0) {$v_1$};
            \node[vertex] (u2) at (4,2) {$u_1$};
            \node[vertex] (v3) at (6,0) {$w_2$};
            \node[vertex] (u3) at (6,2) {$t_2$};
            \node[vertex] (v4) at (8,0) {$v_2$};
            \node[vertex] (u4) at (8,2) {$u_3$};
            \node[vertex] (u5) at (10,2) {$t_3$};

             \node at (11,1) {\ldots};

             \draw[thick] (u0) to node[midway, fill=white] {$1$} (u1) {};
             
             \draw[thick] (u2) to node[midway, fill=white] {$-\frac{1}{2}$} (v2) {};
             \draw[thick] (v1) to node[midway, fill=white] {$1$} (v2) {};
             \draw[thick] (u1) to node[midway, fill=white] {$-\frac{1}{2}$} (u2) {};
             \draw[thick] (u2) to node[midway, fill=white] {$-\frac{1}{2}$} (u3) {};

             \draw[thick] (u4) to node[midway, fill=white] {$\frac{1}{4}$} (v4) {};
             \draw[thick] (v3) to node[midway, fill=white] {$-\frac{1}{2}$} (v4) {};
             \draw[thick] (u3) to node[midway, fill=white] {$\frac{1}{4}$} (u4) {};

             \draw[thick] (u4) to node[midway, fill=white] {$\frac{1}{4}$} (u5) {};

            \end{tikzpicture}
            \caption{The construction of the vector $\veg$ in the proof of Theorem \ref{thm:path_imbalance}. Each edge is labeled with its corresponding value in $\veg$.}
            \label{fig:path_imbalance}
    \end{figure}
Given that the circuit imbalance measure is unbounded for edge sets of short paths, a natural question is whether this holds true for edge sets of short cycles. We see this is true by a very similar construction in the next result. 
\begin{theorem}\label{thm:cycle_imbalance}
If $\mh\supseteq\{H:\text{$H$ is the edge set of a cycle}, |H|\leq 4\}$, then $\kappa(\mh) \in \Omega(2^{|E|})$.   
\end{theorem}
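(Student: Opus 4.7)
The plan is to adapt the construction of Theorem~\ref{thm:path_imbalance} by augmenting the graph so that each of the four 3-edge path equations driving the exponential blowup there can be recovered as a 4-cycle equation with a single ``null'' edge. The idea is that if $\veg$ vanishes on an edge $e$, then the support constraint $\supp(B\veh)\subseteq\supp(B\veg)$ (using in particular the identity rows of $B$) forces $\veh(e)=0$, so any 4-cycle constraint through $e$ collapses to the 3-path constraint on the remaining three edges.

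Concretely, starting from the graph $G_k$ of Theorem~\ref{thm:path_imbalance}, I would add the ``closing'' edges $u_{i-1}v_i$ and $t_iw_i$ for $1 \le i \le k$, together with $w_it_{i+1}$ and $u_{i-1}t_{i+1}$ for $1 \le i \le k-1$. These are precisely the edges that turn the four 3-paths used in the proof of Theorem~\ref{thm:path_imbalance} into 4-cycles. Extend $\veg$ to this larger graph by keeping the values on the original edges of $G_k$ and setting $\veg(e)=0$ on every new edge. Then for any circuit $\veh$ with $\supp(B\veh)\subseteq\supp(B\veg)$, $\veh$ must vanish on the new edges, and each of the four planned 4-cycles in gadget $i$ yields exactly one of the equations (\ref{e1})--(\ref{e4}) from Theorem~\ref{thm:path_imbalance}. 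The derivation there then produces $|\veh(u_{i-1}t_i)|=2|\veh(u_it_{i+1})|$ for all $1 \le i \le k-1$, giving $\kappa(\mh)\in\Omega(2^{|E|})$.

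The main obstacle is verifying that no other cycle of length at most 4 in the augmented graph introduces a constraint that obstructs this derivation. The key observation is that the subgraph induced on $\{u_{i-1}, u_i, w_i, t_i, v_i, t_{i+1}\}$ is precisely the complete bipartite graph $K_{3,3}$ with sides $\{u_{i-1}, u_i, w_i\}$ and $\{t_i, v_i, t_{i+1}\}$. This subgraph is triangle-free, ruling out 3-cycle constraints entirely, and contains exactly nine 4-cycles; the four planned ones are active, and a direct case-by-case computation shows that for each of the other five the sum $\sum_{e \in C}\veg(e)$ is a nonzero integer multiple of $(-2)^{-(i-1)}$ or $(-2)^{-i}$, so those cycles impose no constraint on $\veh$. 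A short additional check, using the fact that no vertex unique to gadget $i$ is adjacent to any vertex unique to gadget $i+1$, rules out cross-gadget 4-cycles other than ones that already coincide with a planned cycle of one of the two gadgets. This completes the verification, and the algebraic step then reduces identically to the one in Theorem~\ref{thm:path_imbalance}.
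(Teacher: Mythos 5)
Your proposal is correct, but it reaches Theorem \ref{thm:cycle_imbalance} by a different construction than the paper's. The paper does not recycle the path gadget: it builds a fresh chain of $K_4$'s on vertex pairs $\{u_i,v_i\}$, assigns $(-2)^{-i}$ to $u_iv_i$ and to two of the four cross edges between consecutive pairs (zero to the other two), and per step extracts exactly three active constraints---one triangle and two $4$-cycles, each of the latter containing one zero-valued filler edge---which feed directly into observation (\ref{eq:all_half}) to halve $|\veh(u_iv_i)|$ at every step. You instead keep the graph of Theorem \ref{thm:path_imbalance} and add four zero-valued chords per gadget so that each of the $3$-paths behind (\ref{e1})--(\ref{e4}) closes into a $4$-cycle; the identity rows of $B$ force any admissible circuit to vanish on the chords, so the four path equations are recovered verbatim and the algebra of Theorem \ref{thm:path_imbalance} applies unchanged. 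Your verification is sound: the gadget does induce $K_{3,3}$ on $\{u_{i-1},u_i,w_i\}$ versus $\{t_i,v_i,t_{i+1}\}$, the five unplanned $4$-cycles have nonzero $\veg$-sums, and since the augmented graph is bipartite with parts $\{u_j\}\cup\{w_j\}$ and $\{t_j\}\cup\{v_j\}$, there are no triangles and (as neighborhoods of vertices in different gadgets meet in at most one vertex) no $4$-cycles spanning two gadgets. Note, though, that this enumeration is more than the logic strictly requires: any additional row that happens to be active is satisfied by $\veg$ itself, so it can only add equations for $\veh$ and can never invalidate the four you use. What your route buys is modularity and a mild strengthening---because your construction is triangle-free, only $4$-cycle constraints are ever invoked, so your argument in fact shows that $\mh\supseteq\{H:H\text{ is the edge set of a cycle},\ |H|=4\}$ already forces exponential imbalance, which the paper's proof (relying on a triangle constraint) does not directly give. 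What the paper's route buys is economy: fewer vertices and edges per step (hence a slightly better constant in the exponent), only three active constraints per gadget, and no dependence on Theorem \ref{thm:path_imbalance}.
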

\begin{proof}
    Consider the graph $G_k$ with vertex set $V=\{u_i,v_i:0\leq i\leq k\}$ and edge set
    \[    E=\{u_iv_i,u_iu_{i+1},u_iv_{i+1},v_iu_{i+1},v_iv_{i+1}:0\leq i\leq k-1\}\cup\{u_kv_k\}.
    \]
    Let $\veg$ be such that $\veg(u_iv_i)=(-2)^{-i}$ for all $0\leq i\leq k$, 
    \[
    \veg(u_{i-1}u_{i})=\veg(v_{i-1}u_{i})=(-2)^{-i}
    \] 
    for all $1\leq i\leq k$, and 
    \[
    \veg(u_{i-1}v_i)=\veg(v_{i-1}v_i)=0
    \]
    for all $1\leq i\leq k$. See Figure \ref{fig:cycle_imbalance} for such a construction of $\veg$.

    There exists a circuit $\veh$ with $\supp(B\veh)\subseteq\supp(B\veg)$.  We will show that for any such circuit, $|\veh(u_iv_i)|=2|\veh(u_{i+1}v_{i+1})|$ for all $0\leq i\leq k-1$. We have that for all $0\leq i\leq k-1$,
    \begin{align*}
    \veg(\{u_iv_i,v_iu_{i+1},u_iu_{i+1}\})=0,&\\    
    \veg(\{u_iv_i,v_iu_{i+1},u_{i+1}v_{i+1},u_iv_{i+1}\})=0,&\text{ and}\\
    \veg(\{u_iv_i,v_iv_{i+1},v_{i+1}u_{i+1},u_iu_{i+1}\})=0.&
    \end{align*}

     Since $\supp(B\veh)\subseteq\supp(B\veg)$, we also have that for all $0\leq i\leq k-1$

     \begin{align*}
    \veh(\{u_iv_i,v_iu_{i+1},u_iu_{i+1}\})=0,&\\    
    \veh(\{u_iv_i,v_iu_{i+1},u_{i+1}v_{i+1},u_iv_{i+1}\})=0,&\text{ and}\\
    \veh(\{u_iv_i,v_iv_{i+1},v_{i+1}u_{i+1},u_iu_{i+1}\})=0.&
    \end{align*}
    
    If for all $0\leq i\leq k-1$ we let $a=-\veh(u_iv_i)$, the above implies that
    \begin{alignat*}{6}
    \veh(u_iu_{i+1}) &+& \veh(v_iu_{i+1}) & &              &= a,& \\
                 & & \veh(v_iu_{i+1}) &+& \veh(u_{i+1}v_{i+1}) &= a,& \text{ and}\\
    \veh(u_iu_{i+1}) & &              &+& \veh(u_{i+1}v_{i+1}) &= a.
    \end{alignat*}
    By (\ref{eq:all_half}), $\veh(u_iu_{i+1})=\veh(v_iu_{i+1})=\veh(u_{i+1}v_{i+1})=\frac{1}{2}a$.  It follows that $|\veh(u_iv_i)|=2|\veh(u_{i+1}v_{i+1})|$ for all $1\leq i\leq k-1$.  Thus, $\kappa(\mh)\geq\Omega(2^{|E|})$, as desired.
\end{proof}
\begin{figure}[htb]
        \centering
        \begin{tikzpicture}
            \tikzset{vertex/.style = {shape=circle,draw,minimum size=2em}}
            \node[vertex] (v0) at (0,0) {$v_0$};
            \node[vertex] (u0) at (0,2) {$u_0$};
            \node[vertex] (v1) at (2,0) {$v_1$};
            \node[vertex] (u1) at (2,2) {$u_1$};
            \node[vertex] (v2) at (4,0) {$v_2$};
            \node[vertex] (u2) at (4,2) {$u_2$};
            \node[vertex] (v3) at (6,0) {$v_3$};
            \node[vertex] (u3) at (6,2) {$u_3$};

             \node at (7,1) {\ldots};

             \draw[thick,dashed] (u0) to (u1) {};
             \draw[thick,dashed] (v0) to (u1) {};

             \draw[thick] (u0) to node[midway, fill=white] {$1$} (v0) {};
             \draw[thick] (u0) to node[midway, fill=white] {$-\frac{1}{2}$} (v1) {};
             \draw[thick] (v0) to node[midway, fill=white] {$-\frac{1}{2}$} (v1) {};

             \draw[thick,dashed] (u1) to (u2) {};
             \draw[thick,dashed] (v1) to (u2) {};

             \draw[thick] (u1) to node[midway, fill=white] {$\frac{1}{2}$} (v1) {};
             \draw[thick] (u1) to node[midway, fill=white] {$-\frac{1}{4}$} (v2) {};
             \draw[thick] (v1) to node[midway, fill=white] {$-\frac{1}{4}$} (v2) {};

             \draw[thick,dashed] (u2) to (u3) {};
             \draw[thick,dashed] (v2) to (u3) {};

             \draw[thick] (u2) to node[midway, fill=white] {$\frac{1}{4}$} (v2) {};
             \draw[thick] (u2) to node[midway, fill=white] {$-\frac{1}{8}$} (v3) {};
             \draw[thick] (v2) to node[midway, fill=white] {$-\frac{1}{8}$} (v3) {};

             \draw[thick] (u3) to node[midway, fill=white] {$\frac{1}{8}$} (v3) {};
            
            \end{tikzpicture}
            \caption{The construction of the vector $\veg$ in the proof of Theorem \ref{thm:cycle_imbalance}. Each edge is labeled with it's corresponding value in $\veg$, where dashed lines indicate a value of 0.}
            \label{fig:cycle_imbalance}
    \end{figure}

Finally, we note that $\mh = \{\delta(v):v\in V\}$---the collection such that $B(G,\mh)$ is the inequality constraint matrix of the fractional matching polytope---can be equivalently expressed as 
\[\mh = \{H:H\text{ is the edge set of a maximal star subgraph}\},\]
where $H$ is the edge set of a \textit{maximal} star subgraph if there does not exist $e\in E$ such that $H\cup\{e\}$ is also the edge set of a star subgraph of $G$. As noted in Section \ref{sec:contributions}, this collection $\mh$ gives constant circuit imbalance \cite{dks-19}.  Since the collection of all \textit{maximal} star subgraphs gives constant circuit imbalance, it is natural to wonder whether the collection of all star subgraphs gives sub-exponential circuit imbalance.  We now show that this is not the case.

\begin{theorem}\label{thm:star_imbalance}
If $\mh\supseteq\{H:\text{$H$ is the edge set of a star subgraph}, |H|\geq 2\}$, then $\kappa(\mh) \in \Omega(2^{|E|})$.   
\end{theorem}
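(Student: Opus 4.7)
The plan is to mimic the structure of Theorems \ref{thm:induce_imbalance}--\ref{thm:cycle_imbalance}: construct, for each $k \geq 2$, a graph $G_k$ and a vector $\veg$ whose support forces any circuit $\veh$ with $\supp(B\veh) \subseteq \supp(B\veg)$ to satisfy, at each ``link'' of a chain, the hypothesis of observation (\ref{eq:all_half}), thereby propagating a halving of $|\veh|$ along a path of length $\Theta(|E|)$. The novelty compared with the previous theorems is that our only available constraints are characteristic vectors of star edge sets, so the entire gadget at each link must sit inside the edge-neighborhood of a single vertex.

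I would take $G_k$ to be a caterpillar graph: a backbone path on $v_0, v_1, \ldots, v_k$ with two pendant leaves $\ell_i^1, \ell_i^2$ attached to each interior vertex $v_i$ for $1 \leq i \leq k-1$, so each interior vertex has degree four. Write $e_i = v_{i-1}v_i$ for the backbone edges and define
\[
\veg(e_i) = (-2)^{-(i-1)} \quad (1 \leq i \leq k), \qquad \veg(v_i\ell_i^1) = \veg(v_i\ell_i^2) = (-2)^{-i} \quad (1 \leq i \leq k-1).
\]
The key case check is that at each interior vertex $v_i$, the subsets $H \subseteq \delta(v_i)$ with $|H| \geq 2$ and $\veg(H) = 0$ are \emph{exactly} the three size-three stars
\[
\{e_i, e_{i+1}, v_i\ell_i^1\}, \quad \{e_i, e_{i+1}, v_i\ell_i^2\}, \quad \{e_i, v_i\ell_i^1, v_i\ell_i^2\};
\]
all other stars at $v_i$ (size two, the remaining size-three star $\{e_{i+1}, v_i\ell_i^1, v_i\ell_i^2\}$, and the size-four star) have nonzero $\veg$-sum. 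At the leaves and at the backbone endpoints $v_0, v_k$, every vertex has degree one, so no stars of size at least two arise.

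Given any circuit $\veh$ with $\supp(B\veh) \subseteq \supp(B\veg)$, the three designated stars at $v_i$ yield $\veh$-sums equal to zero, and after setting $a := -\veh(e_i)$ these equations become
\[
\veh(e_{i+1}) + \veh(v_i\ell_i^1) = a,\quad \veh(e_{i+1}) + \veh(v_i\ell_i^2) = a,\quad \veh(v_i\ell_i^1) + \veh(v_i\ell_i^2) = a,
\]
which is precisely the hypothesis of (\ref{eq:all_half}). Thus $\veh(e_{i+1}) = -\veh(e_i)/2$, and iterating for $i = 1, \ldots, k-1$ gives $|\veh(e_1)| = 2^{k-1}|\veh(e_k)|$. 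Since $|E(G_k)| = 3k-2$, we conclude $\kappa(\mh) \in \Omega(2^{|E|})$.

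The main obstacle I anticipate is the case check above. Because the family $\{H : H \text{ is the edge set of a star}, |H| \geq 2\}$ is very rich, every subset of $\delta(v_i)$ of size at least two supplies a candidate constraint, and any extraneous zero-sum star would give a further linear relation on $\veh$ that could collapse it on this support or otherwise break the doubling. The calibration $\veg(e_i) = -2\veg(v_i\ell_i^j)$ at each interior vertex is chosen precisely so that the three ``propagating'' stars are zero-sum while the competing size-two, remaining size-three, and size-four stars at the same vertex are not; verifying this finite list is routine but essential.
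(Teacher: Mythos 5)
Your proposal is correct and takes essentially the same approach as the paper: build a chain graph with a geometrically decaying vector $\veg$, invoke the existence of a circuit $\veh$ with $\supp(B\veh)\subseteq\supp(B\veg)$, and use zero-sum star constraints to force the entries of $\veh$ to halve along the chain, exactly as in the paper's proofs in this section. The only difference is the gadget: you use a caterpillar whose degree-four backbone vertices supply three size-three zero-sum stars feeding observation (\ref{eq:all_half}), whereas the paper uses a doubled path and derives the same halving from one size-three and two size-two stars; also, your full case check of non-zero-sum stars, while careful, is not actually needed, since any additional zero-sum relations are satisfied by $\veg$ itself and the three listed stars already pin $\veh$ down to a scalar multiple of $\veg$.
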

\begin{proof}
    Consider the graph $G_k$ with vertex set $V=\{u_i,v_i:0\leq i\leq k\}$ and edge set 
    \[
    E=\{u_iv_i,v_iu_{i+1}v_iv_{i+1}:0\leq i\leq k-1\}\cup \{u_kv_k\}
    \]
    Let $\veg$ be such that $\veg(u_iv_i)=2^{-i}$ and $\veg(v_{i-1}u_i)=\veg(v_{i-1}v_i)=-(2^{-i})$ for $1\leq i\leq k$. See Figure \ref{fig:star_imbalance} for such a construction of $\veg$.

     There exists a circuit $\veh$ with $\supp(B\veh)\subseteq\supp(B\veg)$.  We will show that for any such circuit, $\veh(u_iv_i)=2\veh(u_{i+1}v_{i+1})$ for all $0\leq i\leq k-1$. We have that for all $0\leq i\leq k-1$,
    \begin{align*}
    \veg(\{u_iv_i,v_iu_{i+1},v_iv_{i+1}\})=0,&\\    
    \veg(\{v_iu_{i+1},u_{i+1}v_{i+1}\})=0,&\text{ and}\\
    \veg(\{v_iv_{i+1},u_{i+1}v_{i+1}\})=0.&
    \end{align*}

    Since $\supp(B\veh)\subseteq\supp(B\veg)$, we also have that for all $0\leq i\leq k-1$

    \begin{align}
    \veh(\{u_iv_i,v_iu_{i+1},v_iv_{i+1}\})=0,&\label{f1}\\    
    \veh(\{v_iu_{i+1},u_{i+1}v_{i+1}\})=0,&\text{ and}\label{f2}\\
    \veh(\{v_iv_{i+1},u_{i+1}v_{i+1}\})=0.\label{f3}&
    \end{align}

    By equations (\ref{f2}) and (\ref{f3}) we have that for $0\leq i\leq k-1$, 
    \[
    -\veh(v_iu_{i+1}) = \veh(u_{i+1}v_{i+1}) = - \veh(v_iv_{i+1}),
    \]
    and so by equation (\ref{f1}) we have that for $0\leq i\leq k-1$
    \[
    \veh(u_iv_i) = - (\veh(v_iu_{i+1}) + \veh(v_iv_{i+1})) = 2\veh(u_{i+1}v_{i+1}).
    \]
    Thus, $\kappa(\mh)\in\Omega(2^{|E|})$, as desired.
\end{proof}
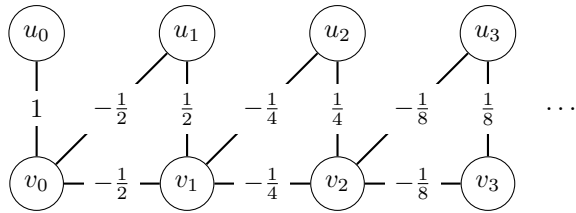
\begin{figure}[htb]
        \centering
        \begin{tikzpicture}
            \tikzset{vertex/.style = {shape=circle,draw,minimum size=2em}}
            \node[vertex] (v0) at (0,0) {$v_0$};
            \node[vertex] (u0) at (0,2) {$u_0$};
            \node[vertex] (v1) at (2,0) {$v_1$};
            \node[vertex] (u1) at (2,2) {$u_1$};
            \node[vertex] (v2) at (4,0) {$v_2$};
            \node[vertex] (u2) at (4,2) {$u_2$};
            \node[vertex] (v3) at (6,0) {$v_3$};
            \node[vertex] (u3) at (6,2) {$u_3$};

             \node at (7,1) {\ldots};

             \draw[thick] (u0) to node[midway, fill=white] {$1$} (v0) {};
             \draw[thick] (v0) to node[midway, fill=white] {$-\frac{1}{2}$} (u1) {};
             \draw[thick] (v0) to node[midway, fill=white] {$-\frac{1}{2}$} (v1) {};

             \draw[thick] (u1) to node[midway, fill=white] {$\frac{1}{2}$} (v1) {};
             \draw[thick] (v1) to node[midway, fill=white] {$-\frac{1}{4}$} (u2) {};
             \draw[thick] (v1) to node[midway, fill=white] {$-\frac{1}{4}$} (v2) {};

             \draw[thick] (u2) to node[midway, fill=white] {$\frac{1}{4}$} (v2) {};
             \draw[thick] (v2) to node[midway, fill=white] {$-\frac{1}{8}$} (u3) {};
             \draw[thick] (v2) to node[midway, fill=white] {$-\frac{1}{8}$} (v3) {};

             \draw[thick] (u3) to node[midway, fill=white] {$\frac{1}{8}$} (v3) {};
            
            \end{tikzpicture}
            \caption{The construction of the vector $\veg$ in the proof of Theorem \ref{thm:star_imbalance}. Each edge is labeled with its corresponding value in $\veg$.}
            \label{fig:star_imbalance}
    \end{figure}

\section{The Coloring Problem}
\label{sec:col}
In this section, we focus on the problem of vertex-coloring a graph with $n$ vertices.  Specifically, we are interested in analyzing the circuits of the \textit{fractional} coloring LP, i.e., the linear programming relaxation of the coloring problem (see Definition \ref{def:fractional_coloring_lp}). 
As we will show in Section \ref{sec:kempe_imbal}, this LP has at least exponential circuit imbalance measure.  In keeping with the broad themes and objectives of this work, we therefore explore the degree to which the 0/1 circuits alone are sufficiently useful in settings where one might ordinarily utilize the full set of circuits.  What we find is that the 0/1 circuits of the fractional coloring LP suffice to allow one to move between any pair of \textit{proper} colorings (i.e., between the integer solutions of the LP).  

As we show in Theorem \ref{thm:colcir} and Corollary \ref{cor:kemp_equiv}, the 0/1 circuits that move from a proper coloring to a proper coloring can be interpreted as a very natural generalization of so-called Kempe swaps.  Kempe swaps are a classical notion in the theory of graph coloring which give a natural and, in some sense, minimal way to reconfigure a coloring.  However, it is known that in some circumstances, Kempe swaps do not suffice to allow one to reconfigure \textit{any} coloring to \textit{any other} coloring.  In contrast, a consequence of our results is that these generalized Kempe swaps which come from 0/1 circuits \textit{do} suffice for this purpose.  

Moreover, we show that this is \textit{not} because the 0/1 circuits constitute so large a portion of $\{0,\pm1\}^n$ as to make it trivial to move between two colorings.  To explain this by way of contrast, it is known that the 0/1 circuits of the Matching and TSP LPs contain so many vectors from $\{0,\pm 1\}^n$ that almost all pairs of matchings (resp. TSP tours) are one circuit step apart \cite{kps-19}.  In Theorem \ref{thm:coloring-walks} we show that reconfiguring two proper colorings via these generalized Kempe swaps may require up to $n$ steps.

Before proceeding to the results, we will first define all the necessary concepts and terminology.

\subsection{Kempe Dynamics}
\label{sec:kempe_dy}

We begin with the graph-theoretic definitions needed to study the polytopes in this section and Section \ref{sec:tree}. Let $G = (V,E)$ be a graph with the vertex set $V$, the edge set  $E$, and the minimum degree $\delta(G)$. Let $S\subseteq V$ be any subset of vertices of G. The \emph{induced subgraph}, $G[S]$ is the graph whose vertex set is $S$ and whose edge set consists of all the edges in $E$ that have both endpoints in $S$. An \emph{independent set} in a graph $G$, is a set $I\subseteq V$ such that no two vertices of $I$ are adjacent in $G$. 

A coloring of the vertices of a graph is an assignment of colors to the vertices of a graph. We say that a coloring of the vertices of $G$ is \emph{proper} if any two adjacent vertices receive different colors. A graph $G$ is \emph{t-colorable} if it has a proper coloring using at most \emph{t} colors. Given a set $\tau=\{1,2, \ldots t\}$, we say that the graph is $\tau$-colored if we color the vertices using all the colors in the set $\tau$. We denote a $\tau$-colored graph as $G(\tau)$. Note that this notation does not, in itself, specify the coloring, and each color class forms an independent set. The \emph{chromatic number} of a graph $G$, denoted $\chi(G)$, is the smallest integer $t$ such that $G$ is $t$-colorable. We formally define this as follows.  
\begin{definition}
    Let $G=(V,E)$ be a graph and let $t$ be a positive integer. A \textbf{$t$-coloring} of $G$ is a function $\phi:V\rightarrow\{1,2,\ldots t\}$ such that if $v_i$ and $v_j$ are adjacent then $\phi(v_i) \neq \phi(v_j)$. The numbers $1,2,\ldots, t $ are called the colors of the coloring $\phi$. 
\end{definition}

\begin{figure}[!ht]
\centering
\begin{tikzpicture}
\tikzstyle{every node}=[font=\LARGE]

% Left group with labels
\draw (-3.75,17.25) node[circle, draw, minimum size=0.5cm] (v1) {} node[left=6pt] {$1$};
\draw (-1.25,17.25) node[circle, draw, minimum size=0.5cm] (v2) {} node[right=6pt] {$2$};
\draw (-3.75,14.75) node[circle, draw, minimum size=0.5cm] (v3) {} node[left=6pt] {$3$};
\draw (-1.25,14.75) node[circle, draw, minimum size=0.5cm] (v4) {} node[right=6pt] {$1$};

% Right group with labels
\draw (2.5,17.25) node[circle, draw, minimum size=0.5cm] (v5) {} node[left=6pt]  {$1$};
\draw (5,17.25) node[circle, draw, minimum size=0.5cm] (v6) {} node[right=6pt]  {$3$};
\draw (2.5,14.75) node[circle, draw, minimum size=0.5cm] (v7) {} node[left=6pt]  {$3$};
\draw (5,14.75) node[circle, draw, minimum size=0.5cm] (v8) {} node[right=6pt] {$1$};

% Edges
\draw (v3) -- (v4);
\draw (v2) -- (v4);
\draw (v1) -- (v2);
\draw (v1) -- (v3);

\draw (v5) -- (v6);
\draw (v6) -- (v8);
\draw (v7) -- (v8);
\draw (v5) -- (v7);

\end{tikzpicture}
\caption{Left: Proper coloring of the cycle on four vertices using colors \{1,2,3\}. Right: Proper coloring using the minimum number of colors of the cycle on four vertices using colors \{1,3\}.} 
\end{figure}
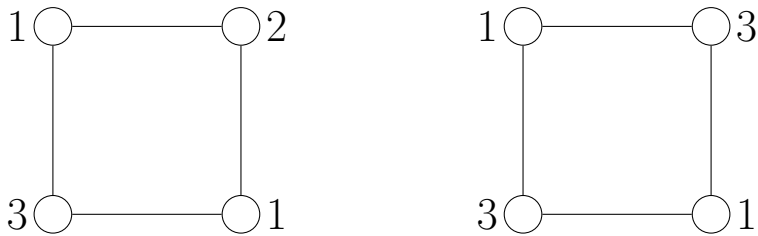

 We follow the standard notation found in \cite{l-16} for the following definitions. The \emph{coloring problem} is defined as determining whether it is possible to color the vertices of a graph with a limited number of colors, say $t$, such that the coloring is proper. The $t$-colorability problem for $t\geq 3$ is an NP-complete problem \cite{k-72}, and as a result determining the chromatic number of a graph is NP-complete. Traditionally, the coloring problem is modeled as the integer program below.

\begin{definition} \label{def:col}
Let $G = (V, E)$ be a graph and $\tau = \{ t_1, \ldots, t_r \}$ a set of colors. The following system is the integer program formulation for the coloring problem. 
    \begin{align*}
    \vex \in \R ^{|V| \times r} &: \sum_{i \in K} \vex({v, i}) = 1 \  \forall  v \in V \\
    & \hspace{-.1cm} \vex({v,i}) + \vex({u, i}) \leq 1 \ \forall  i \in \tau, \, uv \in E \\
    & \hspace{.26cm} \vex({v,i}) \in \{0,1\} \ \forall v \in V, \forall i\in\tau
\end{align*}
\end{definition}

The first set of constraints ensures that each vertex in our graph receives a color. The second set of constraints ensures that if two vertices $u$ and $v$ are connected via an edge they must receive a different color. We define the coloring characteristic vector to index which vertices have been assigned which color.

\begin{definition}
Let $G = (V, E)$ be a graph and $\tau = \{ t_1, \ldots, t_r \}$ a set of colors. Let $G(\tau)$ be a vertex $\tau$-colored graph. 
We define the \textbf{coloring characteristic vector} of $G(\tau)$ to be \textbf{$\ve{X}(G(\tau))$} $ \in \R^{|V| \times |\tau|}$ where $\ve{X}(G(\tau))_{j,t} = 1$ if vertex $j$ is colored with color $t$ and $0$ otherwise. 
\end{definition}
\noindent The entries of this vector are exactly the values of the variables $\vex(v,i)$. Relaxing the integrality constraints in the above integer programming formulation allows us to investigate the circuits associated with the coloring problem. In this relaxation, extreme-point solutions of the feasible region are able to have fractional values.  This corresponds to assigning a single vertex a ``fractional mix" of the available colors (provided this assignment still adheres to the constraints).  
This gives rise to the following definition of the fractional chromatic number. 
\begin{definition}\label{def:fractional_chromatic}
     Let $I(G)$ denote the set of all independent sets of vertices of a graph G, and let $I(G,u)$ denote the independent sets of $G$ that contain the vertex $u$. A \textbf{fractional coloring} of $G$ is a non-negative real function $f$ on $I(G)$ such that for any vertex $u$ of $G$,
     \begin{align*}
         \sum_{S \in I(G,u)}f(S)\geq1. 	
     \end{align*}
The sum of the values of $f$ is called the fractional coloring's weight, and the minimum possible weight of a fractional coloring is called the fractional chromatic number $\chi_f(G)$. 
\end{definition}

The following LP relaxation of the coloring IP is the formulation of the fractional coloring problem which we will consider.  In contrast to Definition \ref{def:fractional_chromatic}, which assigns weights to independent sets (which are understood to be sets of vertices who receive a common fraction of a common color), it directly assigns fractions of colors to vertices one at a time.

\begin{definition}\label{def:fractional_coloring_lp}
Let $G = (V, E)$ be a graph and $\tau = \{ t_1, \ldots, t_r \}$ a set of colors. The {\bf fractional coloring problem} has the following formulation:
\begin{align*}
    \vex \in \R ^{|V| \times r}: \sum_{i \in K} \vex({v, i}) = &  \;\; 1  \;\;\ \forall v \in V \\
    %& \hspace{-.1cm} 
    \vex({v,i}) + \vex({u, i}) \leq & \;\; 1  \;\;\ \forall i \in \tau, \, uv \in E \\
    %& \hspace{.95cm} 
    \vex({v,i}) \geq &  \;\; 0  \;\;\ \forall v \in V, \forall i\in\tau
\end{align*}

\end{definition}

The polytope associated with this continuous relaxation is the \emph{fractional coloring polytope}. This is in contrast to the \emph{coloring polytope} which is defined as the convex hull of the feasible solutions to the IP formulation found in Definition \ref{def:col}. 
We note that, although the fractional coloring polytope has fractional extreme-points, we will still be predominantly concerned with the use of circuits to move between integral extreme-points.  That is, in this context, the circuits are being considered to the extent that they allow us to reconfigure proper $t$-colorings into other proper $t$-colorings.

In graph-theoretic terms, we say that two $t$-colorings, $\phi_1$ and $\phi_2$, are \emph{Kempe equivalent} (or simply equivalent) if there exists a sequence of so-called Kempe swaps (defined below) which transform the first coloring into the second coloring. By the definition of a Kempe swap, all intermediate colorings are proper. Equivalently, we can define the reconfiguration graph $H$ whose vertex set is the set of $t$-colorings of $G$, and where two colorings are adjacent if they differ by a single Kempe swap. Two colorings $\phi_1$ and $\phi_2$ are hence equivalent if their corresponding vertices lie in the same connected component of $H$. Below, we formally define these terms. 

\begin{definition}
    Let $G=(V,E)$ be a graph and $\phi:V\rightarrow \tau$ be a proper coloring where $|\tau|\geq 2$, and for each $x\in \tau$, let $V^x=\{v\in V:\phi(v)=x\}$. Given distinct colors $a,b\in \tau$, an \textbf{$(a,b)$-Kempe chain} of $(G,\phi)$ is a connected component of $G[V^a\cup V^b]$.  We say a subgraph $C$ of $G$ is a Kempe chain of $(G,\phi)$ if it is an $(a,b)$-Kempe chain for some colors $a$ and $b$.  When $\phi$ is clear from context, we may just refer to $(a,b)$-Kempe chains and Kempe chains of $G$.
\end{definition}

Given an $(a,b)$-Kempe chain, we can swap the colors $a$ and $b$ preforming a so-called Kempe swap. 

\begin{definition}
Given $G=(V,E)$, a proper coloring $\phi:V\rightarrow S$ with $|S|\geq 2$, distinct colors $a,b\in S$, and an $(a,b)$-Kempe chain $C$ of $(G,\phi)$, we say a coloring $\phi':V\rightarrow S$ is obtained from $\phi$ via a \textbf{Kempe swap} on $C$ if $\phi'(v)=\phi(v)$ for all $v\notin C$, $\phi'(v)=a$ if $v\in C$ and $\phi(v)=b$, and $\phi'(v)=b$ if $v\in C$ and $\phi(v)=a$. 
\end{definition}

The following classical notion of Kempe equivalence will play a crucial role in defining the circuits of the fractional coloring polytope.
We provide an example of two colorings that are Kempe equivalent in Figure \ref{fig:ex}.
\begin{definition}
    Two colorings are \textbf{Kempe equivalent} if we can transform one into the other through a sequence of Kempe swaps. If all pairs of $t$-colorings are Kempe equivalent, then we say the graph is \textbf{$t$-Kempe mixing}.
\end{definition}
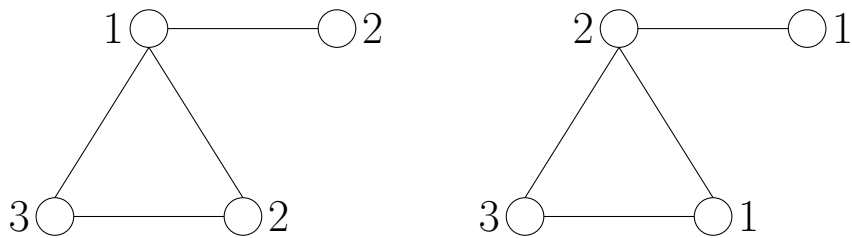
\begin{figure}[!ht]
\centering
\begin{tikzpicture}
\tikzstyle{every node}=[font=\LARGE]
\draw (11.25,14.75) node[circle, draw, minimum size=0.5cm] (v1) {} node[left=6pt] {$1$};
\draw (10,12.25) node[circle, draw, minimum size=0.5cm] (v2) {} node[left=6pt] {$3$};
\draw (12.5,12.25) node[circle, draw, minimum size=0.5cm] (v3) {} node[right=6pt] {$2$};
\draw (13.75,14.75) node[circle, draw, minimum size=0.5cm] (v4) {} node[right=6pt] {$2$};
\draw  (10,12.5) -- (11.25,14.5);
\draw  (11.25,14.5) -- (12.5,12.5);
\draw  (10.25,12.25) -- (12.25,12.25);
\draw  (11.5,14.75) -- (13.5,14.75);
\draw (17.5,14.75) node[circle, draw, minimum size=0.5cm] (v1) {} node[left=6pt] {$2$};
\draw (16.25,12.25) node[circle, draw, minimum size=0.5cm] (v2) {} node[left=6pt] {$3$};
\draw (18.75,12.25) node[circle, draw, minimum size=0.5cm] (v3) {} node[right=6pt] {$1$};
\draw (20,14.75) node[circle, draw, minimum size=0.5cm] (v4) {} node[right=6pt] {$1$};
\draw  (17.75,14.75) -- (19.75,14.75);
\draw  (16.5,12.25) -- (18.5,12.25);
\draw  (17.5,14.5) -- (18.75,12.5);
\draw  (17.5,14.5) -- (16.25,12.5);
\end{tikzpicture}
\caption{Two Kempe equivalent colorings that differ by a Kempe swap.}\label{fig:ex}
\end{figure}

An important note here is that Kempe chains almost always suffice to achieve reachability between any pair of colorings (i.e., graphs are usually $t$-Kempe mixing) as long as the graph in question has a low enough minimum degree in relation to the number of colors available. Combining several previous results of other authors, we get that two colorings are Kempe equivalent if there are enough colors related to the maximum degree of your graph.

\begin{theorem}[see \cite{b-19}, \cite{v-81}, \cite{f-17}]
    Let $3 \leq \Delta \leq k$. If $G$ is a connected graph with maximum degree $\Delta$, then all k-colorings of $G$ are Kempe-equivalent, unless $G$ is the triangular prism. 
\end{theorem}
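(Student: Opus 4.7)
The plan is to prove the result by induction on $|V(G)|$, splitting into the two cases $k \geq \Delta + 1$ and $k = \Delta$. The first case is by now classical and relatively smooth; the second case is the hard part and is where the triangular prism exception appears.

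In the case $k \geq \Delta + 1$, the plan is to pick any vertex $v$ and work with $G - v$, which still has maximum degree at most $\Delta$, so the inductive hypothesis applies. First I would use Kempe swaps in $G$ to equalize $\phi_1(v)$ and $\phi_2(v)$: because $v$ has at most $\Delta < k$ neighbors, at least one color is absent from its neighborhood, and a single Kempe swap on a chain containing $v$ and that spare color can shift $\phi_1(v)$ to any desired value. Once $\phi_1(v) = \phi_2(v)$, I apply induction to the restrictions $\phi_1|_{G-v}$ and $\phi_2|_{G-v}$; each Kempe swap in the smaller graph lifts to $G$ (possibly after one preparatory swap using the spare color at $v$) while leaving the color of $v$ fixed.

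For the tight case $k = \Delta \geq 3$, the plan mirrors the approach of the cited works, most directly \cite{f-17}. Take a minimum counterexample $G$ and derive structural constraints: every vertex must have degree exactly $\Delta$ (otherwise the previous case applies at that vertex), and for any $k$-coloring $\phi$, the $\Delta$ neighbors of each vertex realize exactly $\Delta - 1$ of the available colors, with exactly one color appearing twice. I would then analyze Kempe chains rooted at such a vertex: either some chain-swap produces a coloring strictly closer (in Hamming distance) to the target $\phi_2$, in which case a nested induction on this distance closes the argument, or the chain structure forces a very rigid local configuration. A detailed case analysis on these rigid configurations should show that the only possibility is the triangular prism, and a direct computation on that graph verifies that its 3-colorings split into two genuine Kempe-equivalence classes, confirming it is a real exception and not an artifact of the argument.

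The main obstacle is the structural analysis in the tight case. The difficulty is precisely that the coloring constraints are saturated at every vertex, so one cannot use slack to manoeuvre locally; instead, every attempted Kempe swap either makes progress or locks in another piece of rigid structure. The expected strategy is a double induction (on the Hamming distance between $\phi_1$ and $\phi_2$, nested within the outer induction on $|V(G)|$), where each step either decreases the distance via a swap or pins down more of the forbidden substructure. Isolating the triangular prism as the unique obstruction amounts to showing it is the only graph in which every Kempe chain is \emph{frozen} in this rigidity sense, and this is where the bulk of the case analysis, and the combinatorial novelty of \cite{f-17}, lives.
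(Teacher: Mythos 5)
This theorem is not proved in the paper at all: it is quoted as a known result, assembled from the cited works --- Las Vergnas--Meyniel \cite{v-81} for the non-tight case, Feghali--Johnson--Paulusma \cite{f-17} for $k=\Delta=3$, and Bonamy--Bousquet--Feghali--Johnson \cite{b-19} for $k=\Delta\geq 4$ (Mohar's conjecture for regular graphs). So there is no in-paper argument to compare against; your proposal has to stand on its own as a proof, and it does not.

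The genuine gap is the tight case $k=\Delta$. Your outline says that analyzing Kempe chains at a vertex of full degree either makes progress in Hamming distance or ``forces a very rigid local configuration,'' and that ``a detailed case analysis on these rigid configurations should show that the only possibility is the triangular prism.'' That single sentence is the entire content of \cite{f-17} and \cite{b-19}: the cubic case alone required a lengthy argument involving careful analysis of how chains interact globally (not just locally around one vertex), and the $\Delta\geq 4$ case is a separate substantial paper. Nothing in your sketch identifies the actual mechanism --- e.g.\ why local rigidity at every vertex propagates to a global structure, why only one exceptional graph survives, or even why a minimum counterexample must be $\Delta$-regular \emph{and} have every pair of colorings frozen rather than just one bad pair. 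Asserting that a case analysis ``should'' terminate in the prism is a statement of the theorem, not a proof of it. Secondary, fixable issues: in the case $k\geq\Delta+1$ your lifting step is also glossed --- a Kempe chain of $G-v$ need not be a Kempe chain of $G$ (the chain in $G$ may pass through $v$ and recolor it), and the ``possibly after one preparatory swap'' remark is exactly where the standard degeneracy-based argument of \cite{v-81} needs care; also note that for $\Delta$-regular graphs the degeneracy equals $\Delta$, so the easy induction only covers $k\geq\Delta+2$ cleanly and $k=\Delta+1$ already needs the non-regularity of $G-v$ or a separate argument. As written, the proposal is a roadmap that defers the theorem's real difficulty to the very papers it is meant to replace.
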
 

Due to this condition, current research on Kempe chains is concerned with the so-called $t$-Kempe diameter of any two $t$-colorings \cite{bhi-20,c-23}. 
\begin{definition}
    The \textbf{$t$-Kempe-diameter} of a graph $G$ is defined to be the maximum length of the shortest transformation between any two $t$-colorings of $G$. If a graph is not $t$-Kempe mixing, then we say the diameter is $\infty$.
\end{definition}
An important observation here is that Kempe swaps correspond to edges of the coloring polytope (though there are edges that do not correspond to Kempe swaps). Since each vertex in the reconfiguration graph will also correspond to an extreme-point of the coloring polytope, and since each Kempe swap corresponds to movement along a circuit, the Kempe diameter gives an upper bound on the circuit diameter. 

We now define a natural generalization of Kempe chains which allow for more than two colors in each chain. As we will show, swaps along these generalized Kempe chains will correspond to 0/1 circuits of the fractional coloring polytope which are feasible at extreme-points corresponding to proper colorings (Proposition \ref{prop:kem}).

\begin{definition}
    Let $G=(V,E)$ be a graph and $\phi:V\rightarrow \tau$ be a proper coloring where $|\tau|\geq k$, and for each $x\in \tau$, let $V^x=\{v\in V:\phi(v)=x\}$. Given distinct colors $a_1,\ldots,a_k\in S$, an \textbf{$(a_1,\ldots,a_k)$-Kempe chain} of $(G,\phi)$ is a connected component of $G[V^{a_1}\cup \cdots \cup V^{a_k}]$.  We say a subgraph $C$ of $G$ is a generalized Kempe chain of $(G,\phi)$ if it is an $(a_1,\ldots,a_k)$-Kempe chain of $(G,\phi)$ for some colors $a_1,\ldots,a_k$.  When $\phi$ is clear from context, we may just refer to $(a_1,\ldots,a_k)$-Kempe chains as generalized Kempe chains of $G$.
\end{definition}
We define a similar notion for a generalized Kempe swap.
\begin{definition}
Given $G=(V,E)$, a proper coloring $\phi:V\rightarrow \tau$ with $|\tau|\geq k$, distinct colors $a_1,\ldots,a_k\in \tau$, and an $(a_1,\ldots,a_k)$-Kempe chain $C$ of $(G,\phi)$, we say a proper $\tau$-coloring $\phi':V\rightarrow \tau$ is obtained from $\phi$ via a \textbf{generalized Kempe swap} on $C$ if 
\begin{enumerate}
\item $\phi'(v)=\phi(v)$ for all $v\notin C$,
\item $\phi'(v)\neq \phi(v)$ if $v\in C$, and
\item for all edges $uv$ in $C$, either $\phi'(u)=\phi(v)$ or $\phi'(v)=\phi(u)$.
\end{enumerate}
\end{definition}

This final condition requires that if two adjacent vertices change colors, at least one of them receives the other's color.
Note that in the above definition, if $k=2$ we recover exactly the definition of an $(a,b)$- Kempe chain. We can now define a correspondence from the coloring characteristic vector to a Kempe chain.

\begin{definition}
    Given $\tau$-colored graph $G(\tau)_1$ and $G(\tau)_2$ with colorings  $\phi_1$ and $\phi_2$ such that $\phi_2$ is obtained from $\phi_1$ via a generalized Kempe swap, we say that this Kempe swap \textbf{corresponds} to the vector $\ves = \ve{X}(G(\tau)_2)-\ve{X}(G(\tau)_1)$.
\end{definition}

Next, we show the circumstances under which two characteristic vectors of proper colorings differ by a circuit.

\begin{theorem} \label{thm:colcir}
Let $G = (V, E)$ be a graph and $G(\tau)_1$ and $G(\tau)_2$ be two different proper $\tau$-colorings of $G$. The vector $\ves := \ve{X}(G(\tau)_2) - \ve{X}(G(\tau)_1)$ is a circuit of the fractional coloring polytope if and only if the subgraph $G(s)$ given by the vertices 
\[
    V(\ves) = \{ v : \ves({v, i}) \neq 0 \text{ for some } i \in \tau \} 
\]
and by the edges
\[
    E(\ves) = \{ uv \in E : \ves({u, i}) + \ves({v, i}) = 0, \ves({u, i}) \neq 0 \text{ for some } i \in \tau \}  
\]
is connected.
\end{theorem}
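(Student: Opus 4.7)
The plan is to analyze the structure of $\supp(B\ves)$ directly from the LP and use it to determine when $\ves$ is support-minimal. First I would observe that $\ves \in \{-1,0,1\}^{V \times \tau}$: at each vertex $v$ where the two colorings differ, $\ves(v, i^-(v)) = -1$ for the old color $i^-(v)$ and $\ves(v, i^+(v)) = 1$ for the new color $i^+(v)$, and at vertices where the colorings agree, all entries are zero. Since both colorings satisfy $\sum_i \ve{X}(v,i) = 1$, we get $\ves \in \ker(A)$, and since the nonzero entries lie in $\{\pm 1\}$, $\ves$ is automatically normalized to coprime integer components. Moreover, $\supp(B\ves)$ splits into (i) all pairs $(v,i)$ with $\ves(v,i) \neq 0$ (non-negativity rows), and (ii) all edge-color pairs $(uv, i)$ with $\ves(u,i) + \ves(v,i) \neq 0$ (edge rows). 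Thus edges $uv \in E(\ves)$ are exactly those that have a color $i$ where $\ves(u,i) + \ves(v,i) = 0$ with both entries nonzero, so this pair $(uv,i)$ is \emph{absent} from $\supp(B\ves)$.

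For the forward direction I would prove the contrapositive: if $G(\ves)$ is disconnected, let $C_1$ be the vertex set of one connected component, a proper nonempty subset of $V(\ves)$, and define $\veg$ by $\veg(v,i) = \ves(v,i)$ if $v \in C_1$ and $\veg(v,i) = 0$ otherwise. Then $\veg \in \ker(A) \setminus \{\veo\}$, and the non-negativity part of $\supp(B\veg)$ is strictly contained in that of $\supp(B\ves)$ (vertices in $V(\ves) \setminus C_1$ contribute to $\supp(B\ves)$ but not to $\supp(B\veg)$). The only delicate point is cross-component edges $uv$ with $u \in C_1,\ v \notin C_1$: since $uv \notin E(\ves)$, for every color $i$ we cannot have $\ves(u,i)$ and $\ves(v,i)$ both nonzero with sum zero; the remaining option of both nonzero with nonzero sum would force $\ves(u,i) = \ves(v,i) \in \{\pm 1\}$, contradicting properness of one of the colorings at the edge $uv$. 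The other subcases either match the support contribution of $\ves$ exactly or weakly drop out of $\supp(B\veg)$, so $\supp(B\veg) \subsetneq \supp(B\ves)$ as required.

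For the backward direction, assume $G(\ves)$ is connected and let $\veg \in \ker(A) \setminus \{\veo\}$ satisfy $\supp(B\veg) \subseteq \supp(B\ves)$. The non-negativity part of the containment forces $\supp(\veg) \subseteq \supp(\ves)$, so at each $v \in V(\ves)$ the only potentially nonzero coordinates of $\veg$ are at $i^+(v)$ and $i^-(v)$; combined with the kernel equation $\sum_i \veg(v,i) = 0$, this yields $\veg(v, i^+(v)) = -\veg(v, i^-(v)) =: c_v$, and $c_v = 0$ for $v \notin V(\ves)$. The edge-constraint part of the containment, applied to any $uv \in E(\ves)$ with a witnessing color $i$ satisfying (WLOG) $i = i^+(u) = i^-(v)$, demands $\veg(u,i) + \veg(v,i) = 0$, i.e. $c_u - c_v = 0$. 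Connectedness of $G(\ves)$ then propagates a single common value $c$ across $V(\ves)$, giving $\veg = c\,\ves$. Since $\veg \neq \veo$ we have $c \neq 0$, so $\supp(B\veg) = \supp(B\ves)$ and no strict containment exists, proving $\ves$ is a circuit.

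The main obstacle I anticipate is the careful case analysis in the forward direction to verify that zeroing out the coordinates outside $C_1$ really does strictly shrink $\supp(B\veg)$ without introducing spurious entries at cross-component edges; the properness of the two colorings plus the definition of $E(\ves)$ conspire exactly right to rule out the problematic cases. Once that is handled, the backward direction reduces to a clean propagation argument along edges of $G(\ves)$.
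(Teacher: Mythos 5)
Your proof is correct. The forward (disconnected $\Rightarrow$ not a circuit) direction is the same construction the paper uses --- restrict $\ves$ to one component of $G(\ves)$ --- except that you carry out the cross-component case analysis that the paper dismisses as ``easy to see''; your observation that a cross edge can never have both endpoints nonzero in a common color (zero sum is excluded because the edge would lie in $E(\ves)$, nonzero sum is excluded by properness) is exactly the point that needs checking, and it is right. In the connected direction your route differs from the paper's in a useful way: the paper argues by contradiction in two stages, first showing that any competing $\ves'$ must have the same coordinate support as $\ves$ via a propagation argument, and then ruling out a newly tight edge row by analyzing $\ves(u,k)+\ves(v,k)\in\{0,\pm 1,\pm 2\}$ and excluding $\pm 2$ by properness; it also treats the single-vertex case $V(\ves)=\{v\}$ separately. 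You instead parametrize any kernel vector $\veg$ with $\supp(B\veg)\subseteq\supp(B\ves)$ by one scalar $c_v$ per vertex (using the nonnegativity rows to pin the coordinate support and the equality rows to force $\veg(v,i^+(v))=-\veg(v,i^-(v))$), and propagate $c_u=c_v$ along the witnessing color of each edge of $G(\ves)$ to conclude $\veg=c\,\ves$ outright. This yields the stronger scalar-multiple statement directly, absorbs the paper's single-vertex base case with no extra work, and makes the role of the witnessing colors in the propagation explicit (the paper's propagation step fixes a color $k$ and is stated more loosely on this point). The trade-off is minor: the paper's version makes visible where properness enters in the connected direction, whereas in your argument properness is only needed in the disconnected direction; both are complete proofs.
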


\begin{proof}
We first handle the case where $V(\ves)=\{v\}$ for some vertex $v$.  In this case, $G(\tau)_1$ and $G(\tau)_2$ only differ at $v$.  Without loss of generality, say $v$ has color 1 in $G(\tau)_1$ and color 2 in $G(\tau)_2$.  If $\ves$ is not a circuit, there exists a circuit $\ves'$ whose support is contained in the support of $\ves$, i.e., $\ves'({u,k})=0$ for all $(u,k)\notin\{(v,1),(v,2)\}$.  Moreover, since $\ves({v,1})+\ves({v,2})=0$, we have that $\ves'({v,1})+\ves'({v,2})=0$.  Since $\ves'\neq 0$, it follows that $\ves'$ is a scalar multiple of $\ves$.

Now, suppose that $G(\ves)$ is disconnected. Let $G'(\ves)$ be a connected component of $G(\ves)$. Let $\ves'$ be the vector given by $\ves'({v, k}) = \ves({v, k})$ if $v$ is a vertex of $G'(\ves)$ and zero otherwise. It is easy to see that the support of $\ves'$ is strictly contained in the support of $\ves$ and $\ves'({u, k}) + \ves'({v, k}) = 0$ whenever $\ves({u, k}) + \ves({v, k}) = 0$. Thus, $\ves$ is not a circuit.

Now, suppose that $G(\ves)$ is connected. If $\ves$ is not a circuit, then there exists $\ves' \neq 0$ such that $\ves'({v, k}) = 0$ whenever $\ves({v, k}) = 0$ and $\ves'({u, k}) + \ves'({v, k}) = 0$ whenever $\ves({u, k}) + \ves({v, k}) = 0$. 

First, we show that if $\ves({v,k}) \neq 0$, then $\ves'({v, k})\neq 0$.  Suppose for the sake of a contradiction that there exist vertices $v$ and colors $k$ such that $|\ves({v,k})|>\ves'({v,k})=0$, and let $(v,k)$ be any such pair of vertex and color.  Since $G(\ves)$ is connected and contains more than one vertex, there exists a neighbor $u$ of $v$ such that $\ves({u,k})+\ves({v,k})=0$. Then $\ves'({u,k})+\ves'({v,k})=0$, and thus $0=\ves'({u,k})<|\ves({u,k})|$.  By the connectedness of $G(\ves)$, it follows that $\ves'({v,k})=0$ for all $v\in V(s)$ and all colors $k$, contradicting that $\ves'\neq 0$.  

Now, since the support of $\ves'$ is equal to the support of $\ves$ then there exist some $u, v, \text{and } k$ such that $\ves({u, k}) + \ves({v, k}) \neq 0$ and $\ves'({u, k}) + \ves'({v, k}) = 0$. Since $\ves$ is the difference of two vectors in $\{ 0, 1 \}^{V\times\tau}$, it follows that $\ves({u, k}) + \ves({v, k}) \in \{ 0, \pm 1, \pm 2 \}$. By assumption, $\ves({u, k}) + \ves({v, k}) \neq 0$, and if $\ves({u, k}) + \ves({v, k}) = \pm 1$ then exactly one of $\ves({u, k})$ or $\ves({v, k})$ is zero, and thus exactly one of $\ves'({u,k})$ or $\ves'({v,k})$ is 0. Since $\ves'({u, k}) + \ves'({v, k}) = 0$, we have that $\ves'({u, k}) = \ves'({v, k}) = 0$, contradicting that the support of $\ves$ is equal to the support of $\ves'$. Finally, $\ves({u, k}) + \ves({v, k}) \neq \pm 2$ because this would imply that  in one of the colorings $G(\tau)_1$ or $G(\tau)_2$, $u, v$ are adjacent vertices with the same color, a contradiction. Thus, no such $\ves'$ exists, and therefore $\ves$ is a circuit.
\end{proof}

\begin{corollary}\label{cor:kemp_equiv}
If two proper $\tau$-colored graphs satisfy the hypotheses of Theorem \ref{thm:colcir}, then the difference of their characteristic vectors corresponds to a generalized Kempe swap.
For any colors $a_1,\ldots,a_k$, any $(a_1,\ldots,a_k)$-Kempe swap corresponds to a circuit of the fractional matching polytope.
\end{corollary}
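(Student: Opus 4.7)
The corollary has two directions connecting the circuits of the fractional coloring polytope with generalized Kempe swaps, and my plan is to handle each separately, using Theorem~\ref{thm:colcir} as the main tool. Throughout, write $\phi_1, \phi_2$ for the two proper $\tau$-colorings and $\ves = \ve{X}(G(\tau)_2) - \ve{X}(G(\tau)_1)$ for their difference.

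For the forward direction, assume we are in the setting where Theorem~\ref{thm:colcir} gives us that $\ves$ is a circuit, equivalently that $G(\ves)$ is connected. I would take $C := V(\ves)$ as the candidate chain and define $A := \{\phi_1(v) : v \in C\} \cup \{\phi_2(v) : v \in C\}$ as the set of colors used on $C$. Writing $A = \{a_1, \ldots, a_k\}$, my goal is to show that $C$ is an $(a_1, \ldots, a_k)$-Kempe chain of $(G, \phi_1)$ and that the three conditions of a generalized Kempe swap hold. Conditions (1) and (2) are immediate from $C = V(\ves)$. For condition (3), I would use the key observation that whenever $u, v \in C$ and $uv \in E$, the disjunction ``$\phi_2(u) = \phi_1(v)$ or $\phi_2(v) = \phi_1(u)$'' is logically equivalent to the existence of a color $i$ with $\ves(u,i) + \ves(v,i) = 0$ and $\ves(u,i) \neq 0$, i.e., to $uv \in E(\ves)$; this follows because $\ves$ is a difference of two 0/1 vectors, analogously to the case analysis performed in the proof of Theorem~\ref{thm:colcir}. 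The main obstacle is the structural step of verifying that $C$ is not merely contained in, but is exactly, a connected component of $G[V^{a_1} \cup \cdots \cup V^{a_k}]$. Containment in a single component follows readily from the connectivity of $G(\ves)$ together with the fact that every edge of $G(\ves)$ lies in this induced subgraph. The reverse containment is the delicate part, and I would argue it by a boundary analysis: if some $w \notin C$ with $\phi_1(w) \in A$ were adjacent in $G$ to a vertex $v \in C$, I would derive a contradiction from the combined properness of $\phi_1$ and $\phi_2$ together with the support-minimality implied by $\ves$ being a circuit.

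For the reverse direction, suppose $\phi_2$ is obtained from $\phi_1$ via a generalized Kempe swap on a Kempe chain $C$. Conditions (1) and (2) force $V(\ves) = C$. Condition (3) tells us that for every edge $uv$ of $G[C]$, there is a color $i$ (namely $\phi_1(v)$ or $\phi_1(u)$) for which $\ves(u,i) + \ves(v,i) = 0$ and $\ves(u,i) \neq 0$, so $uv \in E(\ves)$. Because $C$ is a connected component of $G[V^{a_1} \cup \cdots \cup V^{a_k}]$, the subgraph $G[C]$ is connected, which yields a spanning connected edge set in $G(\ves)$ over $V(\ves) = C$. Theorem~\ref{thm:colcir} then gives that $\ves$ is a circuit of the fractional coloring polytope. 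I expect this reverse direction to be the simpler of the two, being essentially a direct unpacking of the definitions.
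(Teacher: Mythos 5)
Your reverse direction (a generalized Kempe swap yields a circuit of the fractional \emph{coloring} polytope -- ``matching'' in the statement is a typo) is correct and is exactly the intended reading: conditions (1)--(2) of the swap force $V(\ves)=C$, condition (3) places every edge of the chain in $E(\ves)$, and connectedness of the component transfers to $G(\ves)$, so Theorem~\ref{thm:colcir} applies. Note that the paper offers no proof of this corollary at all; it is asserted as an immediate consequence of Theorem~\ref{thm:colcir} and the definitions, so the only real question is whether your forward argument can be completed.

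As proposed, it cannot, and the obstruction is concrete. Your handling of condition (3) establishes the required disjunction only for edges $uv\in E(\ves)$; but condition (3) quantifies over \emph{all} edges of the chain, i.e.\ all edges of $G$ with both endpoints in $V(\ves)$, and connectivity of $G(\ves)$ does not give $E(G[V(\ves)])\subseteq E(\ves)$. On the triangle $uvw$ with $\phi_1=(u\mapsto 1,\ v\mapsto 3,\ w\mapsto 2)$ and $\phi_2=(u\mapsto 2,\ v\mapsto 4,\ w\mapsto 3)$, both colorings are proper and $G(\ves)$ consists of the edges $uw$ and $wv$, so $\ves$ is a circuit by Theorem~\ref{thm:colcir}; yet on the edge $uv$ neither $\phi_2(u)=\phi_1(v)$ nor $\phi_2(v)=\phi_1(u)$, so no generalized Kempe swap takes $\phi_1$ to $\phi_2$. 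The proposed ``boundary analysis'' also cannot be completed: on the path with edges $uv$ and $vw$, with $\phi_1=(u\mapsto 1,\ v\mapsto 2,\ w\mapsto 1)$ and $\phi_2=(u\mapsto 2,\ v\mapsto 3,\ w\mapsto 1)$, the vector $\ves$ is a circuit ($G(\ves)$ is the single edge $uv$), but any color set admitting $u$ and $v$ must contain colors $1$ and $2$ and hence pulls $w$ into the component, so $V(\ves)=\{u,v\}$ is not an $(a_1,\ldots,a_k)$-Kempe chain of $(G,\phi_1)$ at all; there is no contradiction to be extracted from properness plus support-minimality, because nothing is contradicted. These examples show the gap is not a missing lemma you could supply: the first sentence of the corollary, read literally against the paper's definition of a generalized Kempe swap, fails, and a complete argument would first have to repair or reinterpret the statement (e.g.\ imposing condition (3) only on the edges of $E(\ves)$), a point the paper hides by giving no proof.
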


Although we have shown that the difference of  characteristic vectors of two \textit{proper} colorings is a circuit if and only if those colorings differ by a generalized Kempe swap (which are themselves only defined on proper colorings), we observe that there are nevertheless 0/1 circuits of the coloring polytope which do not correspond to Kempe swaps.  In particular, there exist improper colorings whose characteristic vectors differ by a circuit.   Figure \ref{fig:improper} exhibits two improper colorings of $K_3$, and it can be easily verified that the difference of their characteristic vectors is indeed a circuit of the coloring polytope.

\begin{figure}[!ht]
\centering
\begin{tikzpicture}
\tikzstyle{every node}=[font=\LARGE]
\tikzset{graph node/.style={shape=circle,draw=black,inner sep=0pt, minimum size=.4cm}}
\draw (6,13) node[circle, draw, minimum size=0.5cm] (v1) {} node[left=6pt] {$1$};
\draw (7.5,13) node[circle, draw, minimum size=0.5cm] (v2) {} node[right=6pt] {$1$};
\draw (6.75,14.25) node[circle, draw, minimum size=0.5cm] (v3) {} node[left=6pt] {$2$};

\draw (v1)--(v2)--(v3)--(v1);

\draw (12,13) node[circle, draw, minimum size=0.5cm] (u1) {} node[left=6pt] {$2$};
\draw (13.5,13) node[circle, draw, minimum size=0.5cm] (u2) {} node[right=6pt] {$2$};
\draw (12.75,14.25) node[circle, draw, minimum size=0.5cm] (u3) {} node[left=6pt] {$1$};

\draw (u1)--(u2)--(u3)--(u1);

\end{tikzpicture}
\caption{Two improper colorings of $K_3$ whose characteristic vectors differ by a circuit.}\label{fig:improper}
\end{figure}
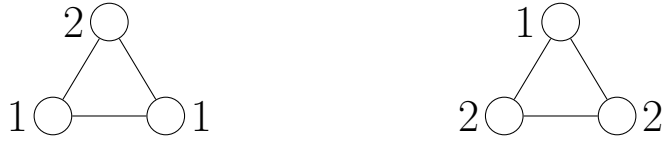

As mentioned in Section \ref{sec:intro}, 0/1 circuits are a useful tool in circuit augmentation, and so it is valuable to have a complete characterization of the set of 0/1 circuits.  Recall also that we are primarily concerned with 0/1 circuits that allow us to move between \textit{integral} extreme-point solutions of the fractional coloring polytope (i.e., those circuits which are differences of characteristic vectors of proper colorings) Although the example above shows that Theorem \ref{thm:colcir} does not describe all 0/1 circuits, it does succeed in describing all of the 0/1 circuits that we care about.

Next, we show that, at an extreme-point of the coloring polytope which corresponds to a proper coloring, all feasible 0/1 circuits correspond to generalized Kempe swaps.  Note that this is not already implied by the previous results.  In general, at an extreme-point solution in $\{0,1\}^n$, it is possible that moving along a feasible 0/1 circuit causes one to arrive at a non-integral (or even non-extreme-point) solution.   
We must first define the following, and then provide a formal statement and proof.

\begin{definition}
    Given a graph $G$ and a set of colors $\tau$, we say that a circuit $\ves$ of the fractional coloring polytope uses colors $\tau'\subset\tau$ if for all vertices $v$ of $G$ and all colors $j\in\tau\setminus\tau'$, $\ves(v,j)=0$.
\end{definition}

\begin{proposition}\label{prop:kem}
    Let $G(\tau)$ be a $\tau$-colored graph with corresponding coloring $\phi$ where $G(\tau)$ corresponds to an extreme-point solution $\ve{X}(G(\tau))$ of the fractional coloring polytope. If $\ves$ is a 0/1 circuit which is feasible at $\ve{X}(G(\tau))$ and which uses $\tau'=\{a_1,\ldots,a_k\}$, then $\ves$ corresponds to an $(a_1,\ldots,a_k)$-Kempe swap at $\phi$.
\end{proposition}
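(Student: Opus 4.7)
The plan is to show that $\ves$ induces a well-defined new coloring $\phi'$ that is proper, and that the transition $\phi \to \phi'$ satisfies the three axioms of a generalized $(a_1, \ldots, a_k)$-Kempe swap on a connected component of $G[V^{a_1} \cup \cdots \cup V^{a_k}]$. The argument has three parts: (i) a per-vertex description of $\ves$; (ii) verifying that the resulting $\phi'$ is proper; (iii) applying Theorem \ref{thm:colcir} to identify the Kempe chain and check the swap axiom.

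First, I characterize the structure of $\ves$ vertex by vertex. The non-negativity constraint $x(v, i) \geq 0$ is tight at $\ve{X}(\phi)$ for every pair $(v, i)$ with $i \neq \phi(v)$, so feasibility of $\ves$ at $\ve{X}(\phi)$ forces $\ves(v, i) \geq 0$ whenever $i \neq \phi(v)$. Combined with the kernel equation $\sum_i \ves(v, i) = 0$ and the 0/1 hypothesis on $\ves$, this yields the following dichotomy at each vertex $v$: either $\ves(v, \cdot) \equiv 0$, or else $\ves(v, \phi(v)) = -1$ together with $\ves(v, j) = 1$ for exactly one $j \neq \phi(v)$. Because $\ves$ uses only colors in $\tau'$, both $\phi(v)$ and the new color $j$ lie in $\tau'$ whenever $v$ moves, so the set $W$ of changing vertices is contained in $V^{a_1} \cup \cdots \cup V^{a_k}$. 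I then define $\phi'(v) := \phi(v)$ for $v \notin W$ and $\phi'(v) := j$ for $v \in W$.

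The key step is to prove that $\phi'$ is a proper coloring. The tight adjacency constraints at $\ve{X}(\phi)$ yield the inequality $\ves(u, i) + \ves(v, i) \leq 0$ whenever $uv \in E$ and $i \in \{\phi(u), \phi(v)\}$, which directly rules out the case where some vertex moves into the current color of a non-moving neighbor. The delicate case is that of adjacent $u, v \in W$ both moving to a common new color $i \notin \{\phi(u), \phi(v)\}$. To exclude this scenario, I plan to exploit the support-minimality of $\supp(B\ves)$: by a local modification of $\ves$ (for instance, re-routing one of the two offending moves along a suitable path in $G(\ves)$), I aim to produce a vector $\veg \in \ker(A) \setminus \{\veo\}$ with $\supp(B\veg) \subsetneq \supp(B\ves)$, contradicting that $\ves$ is a circuit. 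Once $\phi'$ is proper, $\ves$ is a difference of two proper colorings, Theorem \ref{thm:colcir} applies, and the auxiliary graph $G(\ves)$ is connected. A further minimality argument then forces $W$ to be a full connected component of $G[V^{a_1} \cup \cdots \cup V^{a_k}]$, i.e., a Kempe chain $C$. The remaining Kempe-swap axiom---that every edge $uv$ in $C$ satisfies $\phi'(u) = \phi(v)$ or $\phi'(v) = \phi(u)$---is immediate from the definition of $E(\ves)$, since an edge in $E(\ves)$ is precisely one for which some color is simultaneously gained by one endpoint and lost by the other, so in the 0/1 setting one endpoint's new color is the other's old color.

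The main obstacle is establishing the properness of $\phi'$. Feasibility and the 0/1 hypothesis alone do not preclude two adjacent vertices of $W$ from moving to a common new color, so the crucial leverage must come from $\ves$ being a circuit rather than merely a feasible 0/1 direction. Constructing the explicit competing vector $\veg$ with strictly smaller $B$-support (and verifying that the local modification indeed removes at least one constraint from the support without introducing new ones) is the technical heart of the proof.
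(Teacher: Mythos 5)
Your step (i) coincides exactly with the paper's proof: feasibility at $\ve{X}(G(\tau))$ forces $\ves(v,i)\ge 0$ for $i\neq\phi(v)$, and together with $\ves\in\ker(A)$ and the 0/1 hypothesis this gives, at each $v\in V(\ves)$, that $\ves(v,\phi(v))=-1$ and $\ves(v,i^v)=+1$ for exactly one new color $i^v$. After that, however, the paper's proof is far shorter than what you attempt: it defines $\phi'(v)=i^v$, notes $\ves=\ve{X}(G(\tau)')-\ve{X}(G(\tau))$, and concludes directly via Theorem \ref{thm:colcir} and Corollary \ref{cor:kemp_equiv}; it never performs the properness verification you single out, nor the argument that the recolored set is a full component of $G[V^{a_1}\cup\cdots\cup V^{a_k}]$. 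So you have correctly located the delicate points, but your write-up leaves exactly those points as plans (``I plan to exploit support-minimality\dots'', ``a further minimality argument then forces\dots''), which is a genuine gap: the technical heart is announced, not proved.

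More seriously, the first of those plans cannot be carried out from the stated hypotheses, because feasibility in the paper's sense (some $\epsilon>0$) combined with support-minimality does not exclude your delicate case. Concretely, let $G=C_4$ with edges $uv,vz,zw,wu$, let $\phi(u)=\phi(z)=1$ and $\phi(v)=\phi(w)=2$, and let $\ves$ recolor $u\colon 1\to 3$, $v\colon 2\to 3$, $z\colon 1\to 2$, $w\colon 2\to 1$. Every constraint tight at $\ve{X}(\phi)$ has nonpositive directional value (the only row with positive value, edge $uv$ in color 3, is slack), so $\ves$ is feasible for all $\epsilon\le \tfrac12$; and the zero rows of $B\ves$ --- the nonnegativity rows outside $\supp(\ves)$ together with the cancellations on $vz$ (color 2), $zw$ (colors 1 and 2), and $wu$ (color 1) --- force any $\vey\in\ker(A)\setminus\{\veo\}$ with $\supp(B\vey)\subseteq\supp(B\ves)$ to be a scalar multiple of $\ves$, so $\ves$ is a 0/1 circuit. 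Yet the adjacent vertices $u$ and $v$ both gain color 3, so $\ve{X}(\phi)+\ves$ is improper and $\ves$ corresponds to no generalized Kempe swap at $\phi$. Hence no re-routing argument can rule this out; the conclusion is only available under a stronger hypothesis, e.g.\ that the maximal feasible step along $\ves$ has length 1 and lands on an integral point, which is the setting the paper actually uses later for proper walks. Your second plan is also not innocuous: even with a proper target the moved set need not be a full Kempe chain (recolor $u\colon 1\to 2$, $v\colon 2\to 3$ on the path $u$--$v$--$w$ with $\phi=(1,2,1)$: a feasible 0/1 circuit whose moved vertices form a proper subset of the unique $(1,2,3)$-chain), and your appeal to $E(\ves)$ gives the third swap axiom only for edges of $E(\ves)$, not for all edges of the chain. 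In short, your instinct about where the difficulty lies is sound --- indeed sharper than the paper's own write-up, which skips it --- but the proposal does not close the gap, and as planned it cannot be closed without strengthening the hypotheses of Proposition \ref{prop:kem}.
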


\begin{proof}
        We will define a proper $\tau$-coloring $\phi'$ such that the resulting $\tau$-colored graph $G(\tau)'$ satisfies $\ves = \ve{X}(G(\tau)')-\ve{X}(G(\tau))$.     
        Let $G(\ves), V(\ves),$ and $E(\ves)$ be defined as in Theorem \ref{thm:colcir}. Since $\ves$ is feasible at $\ve{X}(G(\tau))$ and 0/1, we have that
        \begin{itemize}
            \item for all $v\in V(\ves)$, $\sum_{i\in \tau} \ves(v,i)=0$,  
            \item $\ves(v,\phi(v))=-1$ for all $v \in V(\ves)$, and
            \item $\ves(v,i)\geq 0$ for all $v\in V(\ves)$ and $i\neq \phi(v)$.
        \end{itemize}
        We therefore have that for all $v\in V(\ves)$, there is exactly one color $i^v$ such that $\ves(v,i^v)=1$, and for all colors $j\in\tau\setminus\{\phi(v),i^v\}$, $\ves(v,j)=0$.  We define $\phi'(v)=i^v$ for all $v\in V(\ves)$ and $\phi'(v)=\phi(v)$ otherwise. We let $G(\tau)'$ be the $\tau$-colored graph colored by $\phi'$.
        Then $\ves = \ve{X}(G(\tau)')-\ve{X}(G(\tau))$.  Since $\ves$ is a circuit, it follows from Theorem \ref{thm:colcir} that $G(\ves)$ is connected, and thus by Corollary \ref{cor:kemp_equiv}, $\ves$ corresponds to an $(a_1,\ldots,a_k)$-Kempe swap at $\phi$, as desired.
\end{proof} 

We note that we can indeed find 0/1 circuits that do not correspond to ordinary Kempe swaps (i.e., those using only two colors) Figure \ref{fig:kempe_coun} depicts such an example.  The depicted colorings differ by a generalized Kempe swap, and therefore the corresponding extreme-points of the fractional matching polytope differ by a circuit.  However, it can be readily seen that they do not differ by a single Kempe swap.
By reinterpreting the 0/1 circuits as a generalization of Kempe swaps, Proposition \ref{prop:kem} allows for improved reachability results in the coloring setting. 

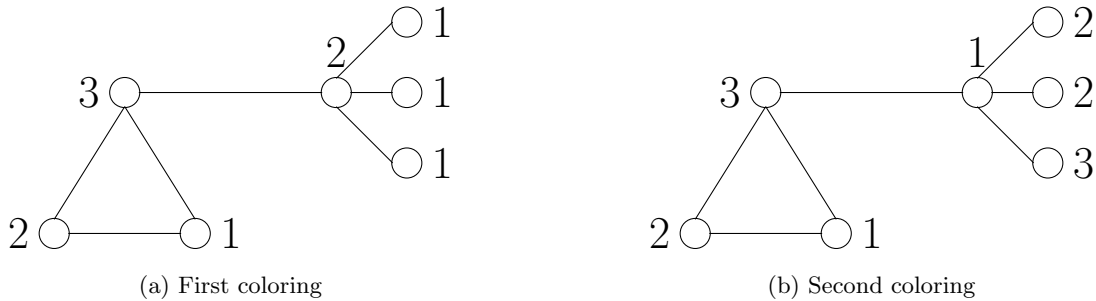
\begin{figure}[!ht]
\centering
\begin{subfigure}{0.45\textwidth}
    \centering
    \begin{tikzpicture}[scale = 0.75]
        \tikzstyle{every node}=[font=\LARGE]
        \draw (11.25,14.75) node[circle, draw, minimum size=0.4cm] (v1) {} node[left=6pt] {$3$};
        \draw  (10,12.25) node[circle, draw, minimum size=0.4cm] (v1) {} node[left=6pt] {$2$};
        \draw  (12.5,12.25) node[circle, draw, minimum size=0.4cm] (v1) {} node[right=6pt] {$1$};
        \draw (15,14.75) node[circle, draw, minimum size=0.4cm] (v1) {} node[above=6pt] {$2$};
        \draw (16.25,16) node[circle, draw, minimum size=0.4cm] (v1) {} node[right=6pt] {$1$};
        \draw (16.25,14.75) node[circle, draw, minimum size=0.4cm] (v1) {} node[right=6pt] {$1$};
        \draw (16.25,13.5) node[circle, draw, minimum size=0.4cm] (v1) {} node[right=6pt] {$1$};
        \draw  (10,12.5) -- (11.25,14.5);
        \draw  (11.25,14.5) -- (12.5,12.5);
        \draw  (10.25,12.25) -- (12.25,12.25);
        \draw  (11.5,14.75) -- (14.75,14.75);
        \draw  (15,15) -- (16,16);
        \draw  (15.25,14.75) -- (16,14.75);
        \draw  (15,14.5) -- (16,13.5);
    \end{tikzpicture}
    \caption{First coloring}
\end{subfigure}
\hfill
\begin{subfigure}{0.45\textwidth}
    \centering
    \begin{tikzpicture}[scale = 0.75]
        \tikzstyle{every node}=[font=\LARGE]
        \draw (11.25,14.75) node[circle, draw, minimum size=0.4cm] (v1) {} node[left=6pt] {$3$};
        \draw  (10,12.25) node[circle, draw, minimum size=0.4cm] (v1) {} node[left=6pt] {$2$};
        \draw (12.5,12.25) node[circle, draw, minimum size=0.4cm] (v1) {} node[right=6pt] {$1$};
        \draw (15,14.75) node[circle, draw, minimum size=0.4cm] (v1) {} node[above=6pt] {$1$};
        \draw (16.25,16) node[circle, draw, minimum size=0.4cm] (v1) {} node[right=6pt] {$2$};
        \draw(16.25,14.75) node[circle, draw, minimum size=0.4cm] (v1) {} node[right=6pt] {$2$};
        \draw (16.25,13.5) node[circle, draw, minimum size=0.4cm] (v1) {} node[right=6pt] {$3$};
        \draw  (10,12.5) -- (11.25,14.5);
        \draw  (11.25,14.5) -- (12.5,12.5);
        \draw  (10.25,12.25) -- (12.25,12.25);
        \draw  (11.5,14.75) -- (14.75,14.75);
        \draw  (15,15) -- (16,16);
        \draw  (15.25,14.75) -- (16,14.75);
        \draw  (15,14.5) -- (16,13.5);
    \end{tikzpicture}
    \caption{Second coloring}
\end{subfigure}

\caption{Two colorings whose corresponding extreme-points differ by a circuit, but are not a single (ordinary) Kempe swap apart.}
\label{fig:kempe_coun}
\end{figure}

\begin{theorem}\label{thm:coloring-walks}
    Let $G$ be a $k$-colorable graph where $k\geq \chi(G)$. Every $k$-coloring of $G$ is equivalent via generalized Kempe swaps. 
\end{theorem}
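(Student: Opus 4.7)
The plan is to induct on the number of vertices where two given proper $k$-colorings disagree. Given two proper $k$-colorings $\phi_1$ and $\phi_2$ of $G$ using colors $\tau = \{1,\ldots,k\}$, let $G(\tau)_1, G(\tau)_2$ be the corresponding $\tau$-colored graphs and set $\ves := \ve{X}(G(\tau)_2) - \ve{X}(G(\tau)_1)$. If $\phi_1 = \phi_2$, there is nothing to prove. Otherwise, I would consider the graph $G(\ves)$ from Theorem \ref{thm:colcir}, pick any connected component $C$ of $G(\ves)$, and exhibit a single generalized Kempe swap that recolors precisely the vertices in $C$ from their $\phi_1$-colors to their $\phi_2$-colors. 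The resulting coloring $\phi'$ will agree with $\phi_2$ on strictly more vertices than $\phi_1$ does, so iterating finitely many times (one component of $G(\ves)$ at a time) produces the desired sequence of generalized Kempe swaps.

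The main technical step is to verify that $\phi'$ is proper. Edges internal to $C$ inherit their colors from $\phi_2$ and so are properly colored; edges disjoint from $V(\ves)$ inherit from $\phi_1$ and are proper; edges $uv$ with $u\in C$ and $v\notin V(\ves)$ are safe because $v$ is unchanged and properness of $\phi_2$ gives $\phi'(u)=\phi_2(u)\neq\phi_2(v)=\phi_1(v)=\phi'(v)$. The only nontrivial case is an edge $uv\in E$ joining $C$ to another component $C'$ of $G(\ves)$, and this is exactly where the definition of $E(\ves)$ does the work: if $\phi'(u)=\phi_2(u)$ were equal to $\phi_1(v)=\phi'(v)$, then since $v$ changes color we would have $\ves(u,\phi_2(u))=1=-\ves(v,\phi_1(v))$, forcing $uv\in E(\ves)$ and hence placing $u$ and $v$ in the same component of $G(\ves)$, contradicting $C\neq C'$.

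With $\phi'$ shown to be proper, Theorem \ref{thm:colcir} and Corollary \ref{cor:kemp_equiv} finish the iteration step: letting $\ves|_C$ denote the vector obtained from $\ves$ by zeroing out all components $(v,i)$ with $v\notin C$, we have $\ves|_C = \ve{X}(G(\tau)')-\ve{X}(G(\tau)_1)$ for the $\tau$-colored graph $G(\tau)'$ associated with $\phi'$, and the graph $G(\ves|_C)$ is precisely the induced subgraph $G(\ves)[C]$, which is connected by the choice of $C$. Hence $\ves|_C$ is a circuit, and by Corollary \ref{cor:kemp_equiv} the transition from $\phi_1$ to $\phi'$ corresponds to a generalized Kempe swap. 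The hypothesis $k\geq\chi(G)$ enters only to guarantee that both $\phi_1$ and $\phi_2$ exist in the first place.

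The main obstacle is the propriety check in the second paragraph --- specifically, ruling out color clashes on edges that cross between two different components of $G(\ves)$. Every other piece is routine manipulation with the definitions of $V(\ves)$, $E(\ves)$, and generalized Kempe swaps from Section \ref{sec:kempe_dy}.
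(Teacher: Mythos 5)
Your proposal is correct and follows the route the paper intends: the paper states this theorem without an explicit proof, treating it as a consequence of Theorem \ref{thm:colcir} and Corollary \ref{cor:kemp_equiv}, and your component-by-component recoloring of $G(\ves)$ --- with the properness check for edges joining two different components of $G(\ves)$, which is indeed the only delicate point --- is exactly the argument needed to make that implication precise. In particular, your observation that the restriction $\ves|_C$ to a single connected component is again the difference of two proper $\tau$-colorings with $G(\ves|_C)=G(\ves)[C]$ connected, hence a circuit corresponding to a generalized Kempe swap, is the intended use of those results, and the iteration over the components terminates as you describe.
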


As in the setting of ordinary Kempe swaps, we can use the results proved here to analyze the Kempe diameter when generalized Kempe swaps (especially in those cases where ordinary Kempe swaps do not achieve reachability). In \cite{bhi-20}, the authors explore subclasses of perfect graphs and give bounds on the Kempe diameter. For example, they show that bipartite graphs and cographs have Kempe diameter at most $3n/2$ and $2n\log(n)$ respectively. Although generalized Kempe swaps are far more permissive than ordinary Kempe swaps, we show that there is still a linear lower bound on the Kempe diameter even when using generalized Kempe swaps.

\begin{definition}
    Given a graph $G$, a color set $\tau$, and two proper $\tau$-colorings $G(\tau)_1$ and $G(\tau)_2$, we say a {\bf circuit walk from $\ve{X}(G(\tau)_1)$ to $\ve{X}(G(\tau)_2)$ in the fractional coloring polytope is a proper walk} if each point visited in the walk is the characteristic vector of a proper $\tau$-coloring of $G$.
\end{definition}

Note that, by definition, the circuits in a proper walk are necessarily 0/1 circuits, and thus correspond to generalized Kempe swaps.

\begin{theorem}\label{thm:long_proper_walk}
    For all $n$, there exists a graph $G$ on $n$ vertices, a color set $\tau$, and proper $\tau$-colorings $G(\tau)_1$ and $G(\tau)_2$ such that a shortest proper walk from $\ve{X}(G(\tau)_1)$ to $\ve{X}(G(\tau)_2)$ has length $n$. 
\end{theorem}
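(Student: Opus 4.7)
My plan is to exhibit an edgeless graph on $n$ vertices for which every generalized Kempe swap can affect only a single vertex, so that matching two colorings that disagree everywhere requires $n$ separate steps. Specifically, I take $G=(V,E)$ with $V=\{v_1,\ldots,v_n\}$ and $E=\emptyset$, set $\tau=\{1,2\}$, and define $\phi_1(v)=1$ and $\phi_2(v)=2$ for every $v\in V$. Both colorings are trivially proper, and their characteristic vectors $\ve{X}(G(\tau)_1)$ and $\ve{X}(G(\tau)_2)$ are integer extreme points of the fractional coloring polytope.

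For the upper bound, I would write down an explicit proper walk of length $n$ that flips one vertex at each step: step $i$ moves in the direction $\ves_i$ defined by $\ves_i(v_i,1)=-1$, $\ves_i(v_i,2)=+1$, and zero elsewhere. By Theorem~\ref{thm:colcir}, each $\ves_i$ is a circuit, because the associated subgraph $G(\ves_i)$ consists of the single vertex $v_i$ and is trivially connected. Every intermediate point is the characteristic vector of a (trivially) proper coloring, so the walk is proper.

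For the lower bound, any proper walk visits only points in $\{0,1\}^{|V|\times|\tau|}$, so consecutive differences lie in $\{-1,0,1\}^{|V|\times|\tau|}$ and each step is a 0/1 circuit. By Proposition~\ref{prop:kem}, each such step corresponds to a generalized Kempe swap. Because $G$ has no edges, for every $\tau'\subseteq\tau$ the induced subgraph $G[V^{\tau'}]$ is edgeless, and every connected component of $G[V^{\tau'}]$ — i.e., every generalized Kempe chain of $G$ — is a single vertex. Hence every step of a proper walk changes the color of exactly one vertex, and since $\phi_1$ and $\phi_2$ disagree at all $n$ vertices, a proper walk must use at least $n$ steps, matching the upper bound.

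There is no serious technical obstacle beyond invoking Proposition~\ref{prop:kem} and Theorem~\ref{thm:colcir}: the crux is simply that an edgeless $G$ forces every Kempe chain to be a singleton, reducing the lower bound to counting disagreements between the two colorings. If a connected example were preferred one could attempt variants (e.g., disjoint edges or paths with restricted color palettes), but these typically reduce the lower bound by a constant factor via Kempe chains of size greater than $1$, so the edgeless construction is the cleanest way to attain length exactly $n$.
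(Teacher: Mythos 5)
Your proposal is correct, but it proves the theorem by a genuinely different construction than the paper. The paper takes $G=K_n$ with a doubled palette of $2n$ colors ($v_i$ colored $c_i$ in the first coloring and $d_i$ in the second) and proves the lower bound by an auxiliary digraph argument: orienting $A(\ves)=\{(u,v):\ves(u,i)+\ves(v,i)=0,\ \ves(u,i)<0\}$ and using that a colored complete graph cannot have two sources on a path, it shows each circuit step between proper colorings of $K_n$ introduces at most one color not present before, so $n$ new colors force $n$ steps. You instead take the edgeless graph with $\tau=\{1,2\}$, where $E(\ves)=\emptyset$ forces every 0/1 circuit between proper colorings to recolor a single vertex (equivalently, every generalized Kempe chain is a singleton), so two colorings disagreeing at all $n$ vertices are at proper-walk distance exactly $n$; your upper-bound walk and the maximality of each unit step check out, and the lower bound can be run either through Proposition \ref{prop:kem} as you do or directly from Theorem \ref{thm:colcir} (connectivity of $G(\ves)$ with no edges forces $|V(\ves)|=1$). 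Your argument is shorter and more elementary, but the example is degenerate in the sense that the polytope simply has no large 0/1 circuits between proper colorings; the paper's $K_n$ example is stronger in spirit, since it shows $n$ steps are needed even when large generalized Kempe swaps are available, and its $A(\ves)$ machinery is reused in the subsequent proposition showing that with exactly $n$ colors on $K_n$ two steps always suffice. Both establish the literal statement.
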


\begin{proof}
    Let $G=K_n$, the complete graph on $n$ vertices with vertex set $\{v_1,\ldots,v_n\}$.  Let $\tau\supseteq\{c_1,\ldots,c_n,d_1,\ldots,d_n\}$, let $G(\tau)_1$ be the coloring that colors the vertex $v_i$ with the color $c_i$, and let $G(\tau)_2$ be the coloring that colors the vertex $v_i$ with the color $d_i$.  Clearly, $n$ 0/1 circuit steps suffice to move from $\ve{X}(G(\tau)_1)$ to $\ve{X}(G(\tau)_2)$ since the color of each vertex can be changed one at a time.

    Next, we will show as an intermediate step that for any $\tau$-colorings $G(\tau)'$ and $G(\tau)''$, if $\ve{X}(G(\tau)'')$ is a single circuit step away from $\ve{X}(G(\tau)')$, then there is at most one color that appears in $G(\tau)''$ but not in $G(\tau)'$. By Theorem \ref{thm:colcir}, if we let $\ves = \ve{X}(G(\tau)'')-\ve{X}(G(\tau)')$, then $(V(\ves),E(\ves))$ is connected.  We now consider the digraph $(V(\ves),A(\ves))$ where
    \[
    A(\ves) = \{ (u,v): \ves(u,i)+\ves(v,i) = 0, \ves(u,i)< 0 \text{ for some }i\in\tau\}
    \]
    That is, an arc is directed from $u$ to $v$ if $v$'s color in $G(\tau)''$ is the same as $u$'s color in $G(\tau)'$.

    Next we note that if a color appears in $G(\tau)''$ but not in $G(\tau)'$, then this corresponds to a source in $(V(\ves),A(\ves))$, i.e., a vertex not incident to any in-arcs.  Now, assume for the sake of a contradiction that $(V(\ves),A(\ves))$ has at least two sources $w_1$ and $w_2$.  Consider any simple path $P$ from $w_1$ to $w_2$ in $(V(\ves),E(\ves))$. The edge of $P$ incident to $w_i$ is an out-arc in $(V(\ves),A(\ves))$ since $w_i$ is a source.  This means that there is at least one vertex $u$ in $P$ whose incident edges in $P$ are both in-arcs in $(V(\ves),A(\ves))$. By definition, this means that both neighbors of $u$ in $P$ have the same color in $G(\tau)'$.  However, since $G$ is the complete graph, this contradicts that $G(\tau)'$ is a proper $\tau$-coloring of $G$.

    Thus, any circuit step between two proper $\tau$-colorings of $G$ can introduce at most one new color.  It follows that $\ve{X}(G(\tau)_2)$ cannot be reached from $\ve{X}(G(\tau)_1)$ in fewer than $n$ circuit steps if we require those steps to move between the proper $\tau$ colorings of $G$.
\end{proof}

We note that the length of a shortest proper walk between two proper colorings depends heavily on the number of available colors.  Theorem \ref{thm:long_proper_walk} relies on the fact that the number of colors available is twice the number of colors necessary to color the graph.  As we show below, if we have only $n$ available colors to color $K_n$, then for any two proper colorings, there is a proper walk between them of length at most 2.

\begin{proposition}
    Let $G=K_n$ and $|\tau|=n$.  Then for any proper $\tau$-colorings $G(\tau)_1$ and $G(\tau)_2$, there exists a proper walk between $\ve{X}(G(\tau)_1)$ and $\ve{X}(G(\tau)_2)$ of length at most two.
\end{proposition}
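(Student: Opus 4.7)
The key observation is that when $G=K_n$ and $|\tau|=n$, every proper $\tau$-coloring must use each color exactly once, since all vertices are pairwise adjacent. Thus every proper coloring is a bijection $V\to\tau$, and any two proper colorings $\phi_1,\phi_2$ of $K_n$ are related by the color permutation $\pi=\phi_2\circ\phi_1^{-1}$ of $\tau$. My plan is to reduce the proposition to the purely algebraic claim that every permutation $\pi$ of $\tau$ factors as $\pi=\sigma\rho$, where each of $\sigma$ and $\rho$ is either the identity or a single cycle on a subset of $\tau$.

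Granted such a factorization, set $\phi_0=\rho\circ\phi_1$; this is automatically a bijection and hence a proper $\tau$-coloring of $K_n$, and $\phi_2=\sigma\circ\phi_0$. For any single-cycle color permutation $\pi'$ applied to a bijective coloring $\phi$, the difference $\ves=\ve{X}(\pi'\circ\phi)-\ve{X}(\phi)$ has $V(\ves)$ equal to the preimages of the cycle of $\pi'$, and $E(\ves)$ precisely records the cyclic sequence of those vertices (as edges in $K_n$), so $G(\ves)$ is a single cycle (or a single edge when $\pi'$ is a transposition), which is connected. By Theorem~\ref{thm:colcir}, $\ves$ is thus a circuit. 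A step of size $1$ in this direction is maximal because $\ve{X}(\phi)$ has entries equal to $1$ at every coordinate where $\ves$ equals $-1$, so any larger step would violate non-negativity. Applying this reasoning to the two transitions $\phi_1\to\phi_0$ and $\phi_0\to\phi_2$ produces a proper walk of length at most $2$.

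To supply the algebraic factorization: if $\pi$ is the identity or a single cycle, take the trivial choice (one of $\sigma,\rho$ is the identity and the other is $\pi$). Otherwise write the non-trivial cycles of $\pi$ as $c_1,\dots,c_m$ with $c_i=(a_{i,1}\,a_{i,2}\,\cdots\,a_{i,k_i})$ and $m\geq 2$, and let $\sigma=(a_{1,1}\,\cdots\,a_{1,k_1}\,a_{2,1}\,\cdots\,a_{m,k_m})$ be the single long cycle obtained by concatenating the cycles of $\pi$ in order. A short symbol-chase shows that $\sigma^{-1}\pi$ fixes $a_{i,j}$ whenever $j<k_i$ and sends $a_{i,k_i}\mapsto a_{i-1,k_{i-1}}$ for $i\geq 2$ together with $a_{1,k_1}\mapsto a_{m,k_m}$; hence $\sigma^{-1}\pi$ is a single $m$-cycle on the ``last elements'' of the cycles of $\pi$. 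Setting $\rho=\sigma^{-1}\pi$ yields $\pi=\sigma\rho$ as required. The only mildly technical step of the proof is verifying this cycle computation, but it reduces immediately to the concatenation formula for $\sigma$; everything else is a direct appeal to Theorem~\ref{thm:colcir} together with the bijection property forced on colorings of $K_n$.
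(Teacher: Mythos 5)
Your proposal is correct and takes essentially the same route as the paper: both decompose the permutation relating the two bijective colorings of $K_n$ into (at most) two single cycles---a long cycle concatenating all the nontrivial cycles and a short cycle on one representative element per original cycle---so that each of the two resulting recolorings has connected $G(\ves)$ and is hence a circuit by Theorem~\ref{thm:colcir}. The only differences are cosmetic: you phrase the construction as an explicit permutation factorization $\pi=\sigma\rho$ (applying the short cycle first), while the paper works with the auxiliary digraph $(V(\ves),A(\ves))$ on vertices and performs the long cyclic shift first.
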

\begin{proof}
    We may assume that $\ves = \ve{X}(G(\tau)_1)-\ve{X}(G(\tau)_2)$ is not a circuit. We reuse the notation $A(\ves)$ introduced in the proof of Theorem \ref{thm:long_proper_walk}.  Since all $n$ colors appear in both $G(\tau)_1$ and $G(\tau)_2$, we have that $(V(\ves),A(\ves))$ is the disjoint union of $t$ directed cycles for some $t$.  Since $\ves$ is not a circuit, we have $t\geq 2$.  Assume without loss of generality that these cycles have vertex sets $V_1,\ldots,V_t$ where $V_1 = \{v_1,\ldots,v_{n_1}\}$ and for $2\leq i\leq t$, $V_i = \{v_{n_{i-1}+1},\ldots, v_{n_i}\}$.  Further assume without loss of generality that the cycle $V_i$ has arcs $\{(v_j,v_{j+1}): n_{i-1}+1 \leq j \leq v_{n_i} - 1\} \cup \{(v_{n_i},v_{n_{i-1}+1}\}$.  Note then that for $k> n_t$, the vertex $v_k$ has the same color in $G(\tau)_1$ and $G(\tau)_2$.

    Now, let $G(\tau)'$ be obtained from $G(\tau)_1$ in the following way.  For $k> n_t$, $v_k$ receives the same color in $G(\tau)'$ as in $G(\tau)_1$.  For $2\leq k\leq n_t$, in $G(\tau)'$ $v_k$ receives the color of $v_{k-1}$ in $G(\tau_1)$, and in $G(\tau)'$ $v_1$ receives the color of $v_{n_t}$ in $G(\tau_1)$.  Then if $\veg_1 = \ve{X}(G(\tau)') - \ve{X}(G(\tau)_1)$, it is not hard to see that $(V(\veg_1),E(\veg_1))$ is a single cycle, and so $\veg_1$ is a circuit by Theorem \ref{thm:colcir}.  Likewise, it is not hard to see that if $\veg_2 = \ve{X}(G(\tau)_2)-\ve{X}(G(\tau)')$ then $V(\veg_2)=\{v_1,v_{n_1+1},v_{n_2+1},\ldots, v_{n_{t-1}+1}\}$, and $(V(\veg_2),E(\veg_2))$ is a cycle.  Then $\veg_2$ is also a circuit by Theorem \ref{thm:colcir}.  Therefore, there exists a proper walk between $\ve{X}(G(\tau)_1)$ and $\ve{X}(G(\tau)_2)$ of length at most two, as desired.
\end{proof}

In the next section, we give an explicit example that the circuits of the fractional coloring polytope have unbounded circuit imbalance measure.

\subsection{Exponential Circuit Imbalance}
\label{sec:kempe_imbal}

We show that the fractional coloring polytope has circuit imbalance which grows at least exponentially in the size of the underlying graph. To do so, we give an explicit construction. In particular, we show this is true even when restricted to a constant number of colors.
\begin{lemma}\label{lem:unbounded_imbalance}
    Given a graph $G=(V,E)$, let $A$ and $B$ refer to the equality and inequality constraint matrices (respectively) of the fractional coloring polytope.  Then $\kappa(A,B)\in \Omega(2^{|V|})$, even for a constant number of colors.
\end{lemma}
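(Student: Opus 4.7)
The plan is to exhibit, for each $k$, a graph $G_k$ on $O(k)$ vertices together with an explicit vector $\veg_k \in \ker A$ that is a circuit of the fractional coloring polytope over $G_k$ with three colors, whose ratio of largest to smallest nonzero component is at least $2^k$. The construction is modeled on the halving gadgets of Section~\ref{sec:imbal}: $G_k$ will be a chain $H_1, \ldots, H_k$ of small subgraphs (each a triangle or $K_4$) connected sequentially by a single edge or shared vertex. The value $\veg_k(v,c)$ at a vertex-color pair in gadget $H_i$ will be proportional to $(-2)^{-i}$, and the zero pattern of $\veg_k$ and of the cancellations $\veg_k(u,c)+\veg_k(v,c)=0$ will be engineered so that at each gadget, one obtains the three-equation system $b+c=a,\ c+d=a,\ b+d=a$ of observation~(\ref{eq:all_half}), with $a$ carried over from $H_{i-1}$ and $b,c,d$ at $H_i$; this forces $b=c=d=a/2$.

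First I would define $\veg_k$ explicitly, verify $\sum_c \veg_k(v,c)=0$ at every vertex (so $\veg_k\in\ker A$), and record the exact support of $B\veg_k$: the non-negativity rows at pairs with $\veg_k(v,c)\neq 0$ and the edge-color rows where the cancellation $\veg_k(u,c)+\veg_k(v,c)$ is nonzero. Next I would take an arbitrary $\veh\in\ker A$ with $\supp(B\veh)\subseteq\supp(B\veg_k)$ and extract, at each gadget $H_i$, the conditions (a) $\veh(u,c)+\veh(v,c)=0$ whenever the corresponding edge-color entry of $B\veg_k$ is zero, (b) $\veh(v,c)=0$ whenever $\veg_k(v,c)=0$, and (c) the kernel equation $\sum_c \veh(v,c)=0$. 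The plan is that combining (a), (b), (c) at $H_i$ reproduces exactly the three-equation system above, giving $\veh$'s entries at $H_i$ equal to $\tfrac{1}{2}$ times those at $H_{i-1}$. Induction on $i$ then shows that $\veh$ is a scalar multiple of $\veg_k$, so $\veg_k$ is a circuit with $\kappa(A,B)\ge 2^k = 2^{\Omega(|V_k|)}$.

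The main obstacle will be Step 1, the gadget design. In Theorem~\ref{thm:induce_imbalance} the three-term relations forcing the halving came from rows of $B$ of support at least 3, whereas in the fractional coloring polytope every inequality row has support exactly 2 (edges) or 1 (non-negativity). The three-term relations must therefore be synthesized indirectly by combining the always-on kernel equation $\sum_c \veh(v,c)=0$ (which is not conditional on the support of $B\veg_k$) with several carefully placed two-term edge-color cancellations meeting at a common vertex or around a short cycle of each gadget. A related subtlety is that the kernel relation is an unconditional \emph{equality} for $\veh$, not an inequality support constraint, so one must verify not only that the halving holds \emph{locally} at each gadget but also that no ``shortcut'' $\veh$ collapses the support of $B\veg_k$ globally by zeroing out an entire gadget while remaining nonzero elsewhere. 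This last check is where the chain linkage between successive gadgets comes in, and will be handled by the same inductive argument that propagates the halving.
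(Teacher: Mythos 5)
Your outline coincides with the paper's proof of Lemma \ref{lem:unbounded_imbalance} in strategy: a chain of small gadgets carrying values proportional to $2^{-i}$, followed by the argument that any circuit $\veh$ with $\supp(B\veh)\subseteq\supp(B\veg)$ must exhibit the same geometric decay, using exactly the three ingredients you list (zero entries forced by the nonnegativity rows, two-term cancellations forced by the edge-color rows, and the unconditional vertex equations from $\ker(A)$). The gap is that the step you defer --- the gadget design --- \emph{is} the technical content of the lemma, and as written you neither exhibit $\veg_k$, nor verify that the synthesized relations actually close into a halving system, nor settle whether three colors suffice. The paper resolves precisely this with \emph{four} colors $\{a,b,c,d\}$ on a chain of triangles $u_iv_iw_i$ joined by the edges $v_iu_{i+1}$: even-indexed triangles carry nonzero values only on colors $a,b,c$ and odd-indexed ones only on $a,c,d$ (Figure \ref{fig:unbounded_imbalance}), and the halving is extracted not as the literal system of observation (\ref{eq:all_half}) but as a chain of substitutions alternating vertex kernel equations with edge-color cancellations, giving $\veh(u_i,a)=-2\veh(v_i,c)=2\veh(u_{i+1},c)=\cdots=4\veh(u_{i+2},a)$. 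It is not at all clear that your three-color version exists (the paper's pattern needs two different color triples on consecutive gadgets); since the statement only requires a constant number of colors, you should simply allow yourself four.

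Two smaller points. First, you plan to prove that $\veh$ is a scalar multiple of $\veg_k$, i.e., that $\veg_k$ is itself a circuit; this is more than needed and is the harder thing to check. The paper only derives the ratio relations $\veh(u_i,a)=4\veh(u_{i+2},a)$ for an arbitrary circuit $\veh$ supported in $\supp(B\veg)$, which already yields $\kappa(A,B)\in\Omega(2^{|V|})$ once one notes that these equalities, combined with the forced zeros and edge cancellations, prevent a nonzero circuit from vanishing along the chain. Second, your ``global shortcut'' worry is handled by that same propagation, so no separate induction is required --- but none of this can be carried out until the explicit value pattern is written down, which is what your proposal still owes.
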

\begin{proof}
    Let $t\geq 3$ be odd and let $G=(V,E)$ where $V=\bigcup_{i = 0}^t\{u_i, v_i, w_i\}$ and \\ $E=\{u_tv_t, v_tw_t, w_tu_t\}\cup \bigcup_{i=0}^{t-1}\{u_iv_i,v_iw_i,w_iu_i,v_iu_{i+1}\}$.  Let $K=\{a,b,c,d\}$ be the set of colors.  Let $\veg$ (see Figure \ref{fig:unbounded_imbalance}) be such that for all even $i\in\{0,\ldots,t\}$,

\begin{align*}
    \veg(u_i,a)= \frac{1}{2^i}
    &&\veg(u_i,b)= -\frac{1}{2^{i+1}}
    &&\veg(u_i,c)=-\frac{1}{2^{i+1}}
    &&\veg(u_i,d)=0\\
    \veg(v_i,a)= 0
    &&\veg(v_i,b)= \frac{1}{2^{i+1}}
    &&\veg(v_i,c)= -\frac{1}{2^{i+1}}   
    &&\veg(v_i,d)= 0\\
    \veg(w_i,a)= 0
    &&\veg(w_i,b)= -\frac{1}{2^{i+1}}
    &&\veg(w_i,c)= \frac{1}{2^{i+1}}
    &&\veg(w_i,d)= 0\\
    \veg(u_{i+1},a)= -\frac{1}{2^{i+2}}
    &&\veg(u_{i+1},b)= 0
    &&\veg(u_{i+1},c)= \frac{1}{2^{i+1}}
    &&\veg(u_{i+1},d)= -\frac{1}{2^{i+2}}\\
    \veg(v_{i+1},a)= -\frac{1}{2^{i+2}}
    &&\veg(v_{i+1},b)= 0
    &&\veg(v_{i+1},c)= 0
    &&\veg(v_{i+1},d)= \frac{1}{2^{i+2}}\\
    \veg(w_{i+1},a)= \frac{1}{2^{i+2}}
    &&\veg(w_{i+1},b)= 0
    &&\veg(w_{i+1},c)= 0
    &&\veg(w_{i+1},d)= -\frac{1}{2^{i+2}}.
\end{align*}

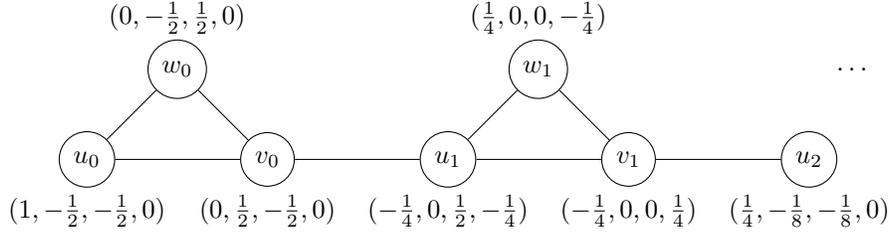
\begin{figure}[!ht]
	\begin{center}
			\begin{tikzpicture}[scale=1.2]	
				\tikzset{vertex/.style = {shape=circle,draw,minimum size=2em}}
                \tikzset{
                empty/.style={shape=circle,draw=none,inner sep=0pt, minimum size=4pt}
			}
				\tikzset{
                dot edge/.style={black, dotted , line width=.4mm}
			}		
                \node[vertex] (u0) at (0,0){$u_0$};
                \node[vertex] (v0) at (2,0){$v_0$};
                \node[vertex] (w0) at (1,1){$w_0$};

                \node[empty, label={[label distance=1]below:$(1,-\frac{1}{2},-\frac{1}{2},0)$}] (lu0) at (0,-.2){};
                \node[empty, label={[label distance=1]below:$(0,\frac{1}{2},-\frac{1}{2},0)$}] (lv0) at (2,-.2){};
                \node[empty, label={[label distance=1]above:$(0,-\frac{1}{2},\frac{1}{2},0)$}] (lw0) at (1,1.2){};

                \node[vertex] (u1) at (4,0){$u_1$};
                \node[vertex] (v1) at (6,0){$v_1$};
                \node[vertex] (w1) at (5,1){$w_1$};

                \node[empty, label={[label distance=1]below:$(-\frac{1}{4},0,\frac{1}{2},-\frac{1}{4})$}] (lu1) at (4,-.2){};
                \node[empty, label={[label distance=1]below:$(-\frac{1}{4},0,0,\frac{1}{4})$}] (lv1) at (6,-.2){};
                \node[empty, label={[label distance=1]above:$(\frac{1}{4},0,0,-\frac{1}{4})$}] (lw1) at (5,1.2){};

                \node[vertex] (u2) at (8,0){$u_2$};
                \node at (8.5,1) {\ldots};

                \node[empty, label={[label distance=1]below:$(\frac{1}{4},-\frac{1}{8},-\frac{1}{8},0)$}] (lu1) at (8,-.2){};

				\draw (u0)--(v0);
				\draw (u0)--(w0);
				\draw (v0)--(w0);

                \draw (v0)--(u1);

                \draw (u1)--(v1);
				\draw (u1)--(w1);
				\draw (v1)--(w1);

                \draw (v1)--(u2);

                \node[empty] (e1) at (8.5,0){};
                \node[empty] (e2) at (8.5,0.5){};
                
			\end{tikzpicture}
	\end{center}
	\caption{The construction of the vector $\veg$ in the proof of Lemma \ref{lem:unbounded_imbalance}. Each vertex $z_i$ is labeled by $(\veg(z_i,a),\veg(z_i,d),\veg(z_i,c),\veg(z_i,d))$, the values of the vector $\veg$ corresponding to that vertex.}
	\label{fig:unbounded_imbalance}
\end{figure}

Clearly, $\veg\in\ker(A)$.  Note that $\veg(u_i,a) = 4\veg(u_{i+2},a)$ for all $i\in\{0,\ldots t-3\}$.  If $\veg$ is a circuit, the result follows.
If $\veg$ is not a circuit, there exists a circuit $\veh$ with $\supp(B\veh)\subseteq\supp(B\veg)$. We will show that in any such circuit $\veh$,  $\veh(u_i,a) = 4\veh(u_{i+2},a)$.

The fact that $\supp(B\veh)\subseteq\supp(B\veg)$ implies that for any row $i$ of $B$ such that $B_i\veg = 0$, we have that $B_i\veh = 0$.  Then, since the identity matrix is a submatrix of $B$, we have that for all vertices $v$ and colors $f$ with $\veg(v,f)=0$, $\veh(v,f)=0$. 
Likewise, for all edges $uv$ and colors $f$ with $\veg(u,f) = -\veg(v,f)$, $\veh(u,f) = -\veh(v,f)$.
Finally, the fact that $\veh\in\ker(A)$ implies that for all vertices $z$,
\[
\veh(z,a) + \veh(z,b) + \veh(z,c) + \veh(z,d) = 0.
\]
By the above, we obtain the following equalities:
\begin{align*}
\veh(u_i,a) 
&= -\veh(u_i,b)-\veh(u_i,c)\\ 
&= \veh(v_i,b)+\veh(w_i,c)\\
&= -\veh(v_i,c) - \veh(v_i,c)
= -2\veh(v_i,c)
= 2\veh(u_{i+1},c)\\
&= 2(-\veh(u_{i+1},a)-\veh(u_{i+1},d))\\
&= 2(\veh(w_{i+1},a)+\veh(v_{i+1},d))\\
&= 2(-\veh(v_{i+1},a) +\veh(v_{i+1},d))
= 4(-\veh(v_{i+1},a))
= 4\veh(u_{i+2},a),
\end{align*}
as desired.
\end{proof}

We note that although the graph $G$ in the construction of Lemma \ref{lem:unbounded_imbalance} does not itself require 4 colors to be properly colored, the circuit $\veh$ constructed in the proof would still be a circuit even if $G$ were merely a \textit{subgraph} of the graph to be colored.  As such, this proof does not rely on the fact that it uses more colors than are necessary.

\section{The Maximum Weight Forest Problem}
\label{sec:tree}
In this section, we devise some insight into the $0/1$ circuits for the constraint system of the maximum weight forest problem for a simple graph $G$ (i.e., a graph with no self loops and no parallel edges). Recall that for a graph $G = (V,E)$ with vertex set $V$ and edge set $E$ (or $V(G)$ and $E(G)$ when the graph $G$ is not clear from the context), a corresponding system can be stated as follows. 

\begin{definition}
    Let $G = (V,E)$ be a simple graph. The {\bf maximum weight forest problem} has a feasible set that corresponds to the extreme-points of the polytope given by the following system of constraints:
    \begin{equation*}
\begin{array}{rcrcllr}
   \vex \in \R^{|E|}:  &&\sum_{e \in E(G[U])}\vex(e) &\leq& |U|-1 &\quad \forall \emptyset \neq U \subseteq V&\\
    && \vex(e) &\geq& 0 &\quad \forall e \in E,& \tag{MWF}
\end{array}\label{MWF}
\end{equation*}
where $G[U]$ is the subgraph of $G$ induced by $U \subseteq V$.
\end{definition}

Here $\vex(e)$ denotes the entry in vector $\vex \in \R^{|E|}$ corresponding to edge $e \in E$. We will use the notation $\vex_{e}$ to denote the unit vector where only $\vex(e)$ is nonzero. The first set of constraints, known as \emph{rank inequality constraints}, enforces that at an integral extreme-point, the support of $\vex$ contains no cycles. In matroid theory, these rank inequalities appear as constraints in the independent matroid polytope for graphic matroids; 
see \cite{oxley-06} for further details. Here, they ensure that the solutions to (\ref{MWF}) correspond to forests, i.e., independent sets of the graphic matroid associated with $G$. 

We note that the matrix rows for the constraints $\vex(e) \geq 0$ appear also in the rank inequality constraints for sets $U=\{u,v\}$ with $uv \in E$. Thus, for the purpose of characterizing circuits or non-circuits of (\ref{MWF}) in this section, it suffices to consider only the constraints

\begin{equation*}
\begin{array}{rcrclcr}
    &&\sum_{e \in E(G[U])}\vex(e) &\leq& |U|-1 &\quad \text{for all } \emptyset \neq U \subseteq V.& \tag{Rank}
\end{array}\label{Rank}
\end{equation*}

It follows immediately from Theorem \ref{thm:induce_imbalance} that (\ref{Rank}) (as well as any system of constraints containing the constraints of (\ref{Rank})) has at least exponential circuit imbalance.

\begin{corollary}\label{cor:rank_imbalance}
    Let $B$ be the inequality constraint matrix corresponding to (\ref{Rank}) or (\ref{MWF}) over a graph $G = (V,E)$. Then $\kappa(0, B)\in \Omega(2^{|E|})$.
\end{corollary}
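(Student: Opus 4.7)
The plan is to invoke Theorem \ref{thm:induce_imbalance} condition 1 directly, applied to the family of graphs $G_k$ constructed in its proof. The key observation is that the rank inequality constraints of (\ref{Rank}) over any graph $G=(V,E)$ have supports of the form $E[U]$ for all nonempty $U \subseteq V$. In particular, the underlying collection $\mh_{\text{Rank}} = \{E[U] : \emptyset \neq U \subseteq V\}$ contains $\{E[U] : U \subseteq V, |U|=5\}$, which is exactly the hypothesis of Theorem \ref{thm:induce_imbalance} condition 1.

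First I would verify that the argument in the proof of Theorem \ref{thm:induce_imbalance} transfers from the matrix $B(G,\mh)$ of Section \ref{sec:imbal} — which tacks on an identity block — to the rank constraint matrix of (\ref{Rank}), which does not. This is harmless: the rank inequalities for two-element sets $U = \{u,v\}$ with $uv \in E$ give rows $\ve{X}(\{uv\}) = e_{uv}$, so the constraint matrix of (\ref{Rank}) effectively contains a full identity on $\R^{|E|}$. Hence any circuit $\veh$ with $\supp(B\veh) \subseteq \supp(B\veg)$, for the vector $\veg$ constructed in the proof of Theorem \ref{thm:induce_imbalance}, still satisfies $\supp(\veh) \subseteq \supp(\veg)$, which is what the derivation requires.

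Next I would note that the presence of many additional rank inequality rows (for all other nonempty $U$) does not break the argument. These extra rows only add further equalities $B_i\veh = 0$ for $\veh$; the three specific equalities on sets of size $5$ used in the proof of Theorem \ref{thm:induce_imbalance} (around each triple $w_{i-1},t_i,u_i,v_i,w_i,t_{i+1}$) remain in force, so (\ref{eq:all_half}) still yields $|\veh(w_{i-1}t_i)| = 2|\veh(w_it_{i+1})|$ for $1 \le i \le k-1$, giving $\kappa(0,B) \in \Omega(2^{|E|})$. The existence of such a circuit $\veh$ in $\supp(B\veg)$ follows from $B\veg \neq 0$ (witnessed, e.g., by the row $e_{u_1 w_1}$).

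For (\ref{MWF}) the conclusion is then immediate: its constraint matrix contains all rank inequality rows together with the nonnegativity rows $-e_e$, and each of the latter has the same support as a row already present from $U=\{u,v\}$ with $uv \in E$, so the set of circuits (and in particular the imbalance) is unchanged. There is no substantial obstacle; the only care needed is the bookkeeping above to confirm that the construction of Theorem \ref{thm:induce_imbalance} truly plugs into (\ref{Rank}) and (\ref{MWF}) despite the differences in how the constraint matrices are assembled.
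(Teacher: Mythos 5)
Your proposal is correct and follows essentially the same route as the paper, which derives Corollary \ref{cor:rank_imbalance} as an immediate consequence of Theorem \ref{thm:induce_imbalance} (point 1), since the supports of the rank inequalities contain all $E[U]$ with $|U|=5$. Your extra bookkeeping---that the rows for $U=\{u,v\}$ with $uv\in E$ supply the identity block and that additional rank rows only impose further (harmless) equalities on any circuit $\veh$ with $\supp(B\veh)\subseteq\supp(B\veg)$---is exactly the verification the paper leaves implicit.
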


We also immediately obtain two known types of 0/1 circuits of (\ref{Rank}) from the connection to matroids (and polymatroids): unit vectors and vectors that are the difference of two unit vectors are circuits of (\ref{Rank}) \cite{t-84}. They correspond to the addition of an edge to (resp. the simultaneous addition of and removal of an edge from) a forest. Our goal in this section is to build a deeper understanding of the more complicated 0/1 circuits of (\ref{Rank}) and of 0/1 vectors that are not circuits, and to then use this insight to devise bounds on the circuit diameter of (\ref{MWF}). 

For our discussion, we specify types of vectors using the following language. If a vector $\veg$ has at least one positive entry and at least one negative entry, we call $\veg$ a \emph{mixed-sign vector}. In particular, a mixed-sign $0/1$ vector has at least one entry $1$ and at least one entry $-1$. Otherwise (i.e., if all entries have the same sign), we call $\veg$ a \emph{uniform-sign vector}. Comparable to the differences of matchings in \cite{kps-19}, the 0/1 vectors we consider often arise from the differences of forests in $G$, since the extreme-points of (\ref{MWF}) correspond precisely to the forests of $G$.

Recall that $\ve{X}(\cdot) \in \R^{|E|}$ denotes the characteristic vector for a collection of edges in $G$. 
For a graph $G$ with forests $F_1$ and $F_2$ in $G$ let $\veg = \ve{X}(F_1) - \ve{X}(F_2)$. Then the entries $\veg(e)$ of $\veg$ are defined as:
\[\veg(e) = 
\begin{cases}
    1 & e \in E(F_1) \text{ and } e \notin E(F_2)\\
    -1 &  e \notin E(F_1) \text{ and } e \in E(F_2)\\
    0 & \text{else}.
\end{cases}
\]

As in Corollary \ref{cor:rank_imbalance}, we let $B$ denote the inequality constraint matrix of (\ref{Rank}) in all that follows.
Using this notation, circuits $\veg$ of (\ref{Rank}) exhibit support-minimality with respect to $B$.
Recall that a vector is support-minimal with respect to a matrix $B$ if there does not exists a nonzero $\vey \neq \veg \in \R^n$ such that $\supp{(B\vey)} \subsetneq \supp{(B\veg)}$. That is, $\veg$ is \textit{not} a circuit if and only if there exists a $\vey \neq \ve0$ such that $\sum_{e\in E(G[U])} \vey(e) = 0$ for all $U$ where $\sum_{e\in E(G[U])} \veg(e) = 0$, and there exists some $U'\subseteq V$ such that $\sum_{e\in E(G[U'])} \vey(e) = 0$ while $\sum_{e\in E(G[U'])} \veg(e) \neq 0$.

\subsection{Simple Uniform and Mixed-Sign Vectors}\label{sec:simplerank}

We begin with a partial characterization of the $0/1$ circuits of (\ref{Rank}). In particular, we prove that a variety of differences of two forests, i.e., $\ve{X}(F_1) - \ve{X}(F_2)$, are circuits. As we will see, the fact that (\ref{Rank}) has an exponential number of constraints poses a major challenge in this task, but it also gives us some tools to this end.

We begin with a simple observation: the only uniform-sign vectors that are circuits are unit vectors. 

\begin{lemma}\label{lem:unitvector}
Let $\veg$ be a uniform-sign vector. Then $\veg$ is a circuit if and only if it is a unit vector.
\end{lemma}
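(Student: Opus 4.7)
The plan is to handle the two directions separately. The backward direction (that unit vectors are circuits) is essentially folklore from matroid/polymatroid theory and is already asserted in the text, but I would give a direct combinatorial proof to keep the lemma self-contained: for $\veg = \pm\ve{X}_e$ with $e=uv \in E$, the support of $B\veg$ consists of all $U \subseteq V$ with $\{u,v\} \subseteq U$. If $\vey$ satisfied $\supp(B\vey) \subseteq \supp(B\veg)$, then for every edge $e'=u'v' \neq e$ the rank constraint for $U'=\{u',v'\}$ would force $\vey(e') = 0$, since $U' \notin \supp(B\veg)$ (the simple graph assumption guarantees $e \notin E(G[U'])$). Thus $\vey$ would be a scalar multiple of $\veg$, precluding strict containment.

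For the forward direction, I would argue by contrapositive: suppose $\veg$ is uniform-sign and $|\supp(\veg)| \geq 2$; I show $\veg$ is not a circuit. Without loss of generality assume $\veg \geq \veo$. The crucial observation is that for any uniform-sign $\veg$, the row sum $\sum_{e \in E(G[U])} \veg(e)$ vanishes if and only if $E(G[U]) \cap \supp(\veg) = \emptyset$, so
\[
\supp(B\veg) = \{\, U \subseteq V : E(G[U]) \cap \supp(\veg) \neq \emptyset \,\}.
\]
Pick any $e \in \supp(\veg)$ and set $\vey = \ve{X}_e$. Then $\supp(B\vey) = \{U : e \in E(G[U])\} \subseteq \supp(B\veg)$ because any $U$ containing $e$ in its induced edge set trivially meets $\supp(\veg)$ via $e$.

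To upgrade this inclusion to strict containment, I would use a second edge $e' \in \supp(\veg)$ with $e' \neq e$ (available since $|\supp(\veg)| \geq 2$) and take $U = V(e')$, the two-vertex endpoint set of $e'$. Then $E(G[U]) = \{e'\}$ by simplicity of $G$, so $U \in \supp(B\veg)$ while $U \notin \supp(B\vey)$ (because $e \neq e'$ cannot lie in $E(G[\{u',v'\}])$). This furnishes a nonzero $\vey$ with $\supp(B\vey) \subsetneq \supp(B\veg)$, so $\veg$ is not support-minimal and hence not a circuit. Combined with the coprime-integer normalization convention of Definition \ref{def:circuits}, this forces any uniform-sign circuit to be exactly $\pm\ve{X}_e$ for some $e \in E$.

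I do not expect a main obstacle here; the argument is short and relies only on the elementary fact that a rank constraint indexed by $U=\{u',v'\}$ isolates the single edge $e'=u'v'$, which is the polytopal reflection of the simple-graph hypothesis. The only thing to be careful about is book-keeping the sign (treating the $\veg \leq \veo$ case symmetrically) and distinguishing the zero vector as a degenerate non-circuit case. This lemma will then let later sections reduce the circuit characterization problem entirely to the mixed-sign setting considered in Section \ref{sec:mixedsign}.
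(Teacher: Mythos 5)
Your proposal is correct and follows essentially the same route as the paper: for the ``unit vectors are circuits'' direction, both arguments use the two-vertex rank constraints of the simple graph to force any $\vey$ with $\supp(B\vey)\subseteq\supp(B\veg_e)$ to be supported only on $e$ (hence a scalar multiple of $\veg_e$), and for the converse both drop to a unit vector on one edge of the uniform-sign support and observe strict containment of the $B$-support via a second edge's endpoint set. The only difference is cosmetic: you spell out the endpoint-set witness for strictness that the paper leaves implicit.
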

\begin{proof}
    Let $\veg_{e_1}$ be a unit vector with nonzero entry for $e_1\in E$, and note that it has coprime integer components. 
    Then $\supp(B\veg_{e_1})$ contains all sets $U\subseteq V$ with $e_1\in E(G[U])$. 
    Consider any vector $\vey\neq\veo$ with $\supp(B\vey)\subseteq\supp(B\veg_{e_1})$. Note that this implies $\supp(\vey)\subseteq\supp(\veg_{e_1})=\{e_1\}$. Then for all $U$, $\sum_{e \in E(G[U])}\vey(e) = 0$ if an only if $\sum_{e \in E(G[U])}\veg_{e_1}(e) = 0$. Thus, $\supp(B\vey)=\supp(B\veg_{e_1})$, and so $\veg_{e_1}$ is a circuit.
    
    Conversely, let $\veg$ be a uniform-sign vector with at least two nonzero entries and let $\vey$ be a unit vector corresponding to one of these nonzero entries, denoted $e_2$. 
    For any $U\subseteq V$, $\sum_{e \in E(G[U])}\vey(e) \neq 0$ if and only if $e_2\in E(G[U])$. Further, for any $U\subseteq V$ with $e_2\in E(G[U])$, we also have $\sum_{e \in E(G[U])}\veg(e) \neq 0$, as $\veg$ is a uniform-sign vector. Thus $\supp(B\vey)\subseteq \supp(B\veg)$ and, in fact, $\supp(B\vey)\subsetneq \supp(B\veg)$, as $\supp(\vey) \subsetneq \supp(\veg)$.  This shows that $\veg$ is not a circuit.
\end{proof}

Due to Lemma \ref{lem:unitvector}, it suffices to consider mixed-sign vectors $\veg$ for the remainder of this section. 
We call both $U\subseteq V$ and its corresponding induced subgraph $G[U]$ a \emph{balanced set} or \emph{balanced subgraph} (with respect to $\veg$) if $\sum_{e \in E(G[U])}\veg(e) = 0$, i.e., if the row corresponding to $U$ in $B$ is not in $\supp(B\veg)$. Otherwise, we call the set \emph{imbalanced} (with respect to $\veg$). When $\veg$ is clear from the context, we will leave out the `with respect to' phrase and simply refer to a balanced or imbalanced set. 
We say that an edge $e$ is contained in a balanced (or imbalanced) set $U$ if $e \in E(G[U])$. Finally, we define a {\em balanced pair} $(e,f)$ with respect to $\veg$ as a pair of edges $e,f \in E$ such that there exists a balanced subgraph $G[U]$ with respect to $\veg$ with $\{e,f\} = E(G[U])\cap\supp(\veg)$. Note then that $\veg(e)=-\veg(f)$ for a balanced pair.

Circuits are easily shown to be unique with respect to $\supp(B\veg)$ \cite{f-14,g-75}, but do not have to be unique with respect to $\supp(\veg)$ itself, i.e., the corresponding edge set. Still, the consideration of vectors $\vey$ that satisfy $\supp(\vey)\subsetneq \supp(\veg)$ will prove to be a valuable tool. We can form such a vector $\vey$ through the {\em drop} of an edge or edge set (defined below) as well as the \textit{drop} of a vector: 

\begin{definition}\label{def:drop1}
    Let $\veg$ be a vector. We say that {\bf an edge $e$ or edge set $F$ is dropped} from $\veg$ (or $\supp(\veg)$) to obtain a vector $\vey$ when $e\in\supp(\veg)\setminus\supp(\vey)$ or when $F\subseteq\supp(\veg)\setminus\supp(\vey)$, respectively.
\end{definition}

\begin{definition}\label{def:drop2}
    Let $\veg$ be a vector. We say that {\bf a vector $\vez$ with $\supp(\vez)\subseteq\supp(\veg)$ is dropped} from $\veg$ when $\vez$ is subtracted from $\veg$ to obtain vector $\vey=\veg-\vez$ such that a new zero entry in $\vey$ is created, i.e., $\supp(\vey)\subsetneq\supp(\veg)$. 
\end{definition}

Note that the drop of an edge set or vector from $\veg$ to arrive at $\vey$ does not require $\supp(B\vey)\subseteq \supp(B\veg)$. Further, the removal of edges from the support $\supp(\veg)$  does not necessarily mean that the edges that remain in the support have not changed values. That is, dropping an edge set $F$ from $\supp(\veg)$ can correspond to the subtraction of a vector in two ways: first, the edges of $F$ can be dropped so that all entries in $\supp(\veg)\backslash F$ remain the same; second, such that the entries in $\supp(\veg)\backslash F$ can change to arbitrary nonzero values.  That is, for a single edge set $F$, there are many choices of a vector $\vez$ such that dropping $\vez$ corresponds to dropping $F$. In the following, it will always be clear which of these situations arises. As we will see, in most of our arguments the specification of the set $F$ will imply exactly what happens to the entries in $\supp(\veg)\backslash F$.

Next, we observe some special circumstances under which a single edge can be dropped from a mixed-sign vector $\veg$ to acquire a vector $\vey$ with $\supp(B\vey)\subsetneq\supp(B\veg)$. For a simple wording, we say that such a $\vey$ has \emph{reduced support}; likewise, we may refer to \emph{reducing the support} to mean modifying a vector such that its support with respect to $B$ is reduced. We begin by connecting the concept of a drop of an edge (or corresponding unit vector) with the concept of balanced sets. 

\begin{lemma}\label{lem:singleedgedrop}
Let $\veg$ be a mixed-sign vector and let $e\in \supp(\veg)$. If the (scaled) unit vector $\lambda \veg_e$ corresponding to edge $e$ (and some $\lambda \geq 0$) 
can be dropped from $\veg$ to arrive at a vector $\vey = \veg - \lambda\veg_e$ with $\supp(B\vey)\subsetneq \supp(B\veg)$, then the edge $e$ is not in a balanced set. Conversely, if an edge $e$ is not in a balanced set, it can be dropped to reduce the support.
\end{lemma}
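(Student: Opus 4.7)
The plan is to verify both directions directly from Definitions \ref{def:drop1} and \ref{def:drop2} together with the definitions of $\supp(B\cdot)$ and of balanced sets. The key observation is that subtracting $\lambda\veg_e$ from $\veg$ alters, for any $U\subseteq V$, the sum $\sum_{f\in E(G[U])}\veg(f)$ precisely by $-\lambda\cdot\mathbb{1}[e\in E(G[U])]$, so changes in $B$-support between $\veg$ and $\vey=\veg-\lambda\veg_e$ can occur only at sets $U$ with $e\in E(G[U])$. Both directions then reduce to tracking what happens at exactly those sets.

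For the forward direction, I would assume for contradiction that $e$ lies in some balanced set $U$ with respect to $\veg$. Then $\sum_{f\in E(G[U])}\veg(f)=0$, so $U\notin\supp(B\veg)$. Since $e\in E(G[U])$ and dropping forces $\vey(e)=\veg(e)-\lambda=0$ (with $\lambda=\veg(e)\neq 0$), we get $\sum_{f\in E(G[U])}\vey(f)=0-\lambda\neq 0$. Hence $U\in\supp(B\vey)\setminus\supp(B\veg)$, contradicting the hypothesis $\supp(B\vey)\subsetneq\supp(B\veg)$.

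For the converse, assume $e$ is in no balanced set. I would take $\lambda=\veg(e)$ and $\vey=\veg-\lambda\veg_e$, so $\vey(e)=0$. For $U$ with $e\notin E(G[U])$ the two sums agree, so membership in $\supp(B\cdot)$ is unchanged. For $U$ with $e\in E(G[U])$, the assumption says $U$ is not balanced, i.e., $U\in\supp(B\veg)$, so whether or not $U$ happens to land in $\supp(B\vey)$, the inclusion $\supp(B\vey)\subseteq\supp(B\veg)$ is preserved. Strictness is witnessed by the two-vertex set $U=\{u,v\}$ where $e=uv$: then $E(G[U])=\{e\}$ and the $\veg$-sum is $\veg(e)\neq 0$ while the $\vey$-sum is $0$, so $U\in\supp(B\veg)\setminus\supp(B\vey)$.

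The argument is mostly bookkeeping; the only delicate point is the sign/value of $\lambda$. A drop in the sense of Definition \ref{def:drop2} forces $\lambda=\veg(e)$, which may be negative, whereas the lemma writes $\lambda\geq 0$. I would handle this by treating $\veg_e$ as the signed unit vector in the direction of $\veg(e)$ (so $\lambda=|\veg(e)|\geq 0$), or equivalently by noting that the sign of $\lambda$ plays no role in the contradiction in the forward direction (we only use $\lambda\neq 0$) and that the converse construction $\vey=\veg-\veg(e)\veg_e$ is independent of the sign convention. With that caveat, no further obstacle arises.
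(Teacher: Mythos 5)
Your proof is correct and follows essentially the same route as the paper: the forward direction by contradiction via a balanced set containing $e$ whose sum becomes nonzero after the drop, and the converse by checking that only sets containing $e$ (all imbalanced by assumption) can change status, with strictness witnessed at the endpoints of $e$. Your explicit strictness witness $U=\{u,v\}$ and the remark on the sign of $\lambda$ only make precise details the paper leaves implicit.
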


\begin{proof}
    Let $\veg$ be a mixed-sign vector and let $e_1 \in \supp(\veg)$, and suppose that the unit vector $\veg_{e_1}$ is dropped from $\veg$ to obtain $\vey = \veg - \veg_{e_1}$ with $\supp(B\vey)\subsetneq \supp(B\veg)$. 
    
    First, suppose for the sake of a contradiction that there exists a balanced set $U \subseteq V$ with respect to $\veg$ which contains $e_1$.
    Then we have $\sum_{e \in E(G[U])}\veg(e) = 0$, but $\sum_{e \in E(G[U])}\vey(e) \neq 0$. Thus, $\supp(B\vey)$ is not a subset of $\supp(B\veg)$, a contradiction. This proves the first claim.
    
    Now, suppose that $e_1$ is not in any balanced sets with respect to $\veg$. For all balanced sets $U \subseteq V$, $\sum_{e \in E(G[U])}\veg(e) = \sum_{e \in E(G[U])}\vey(e) = 0$, as they do not contain $e_1$. Thus, $\supp(B\vey) \subseteq \supp(B\veg)$ and, in fact, $\supp(B\vey) \subsetneq \supp(B\veg)$ because $\supp(\vey) \subsetneq \supp(\veg)$.
\end{proof}

For a mixed-sign circuit $\veg$, Lemma \ref{lem:singleedgedrop} in particular implies that all edges in its support lie in \emph{some} balanced set. To set proper expectations, note that the number of vertex subsets $U$ of $V$ is exponential in $|V|$, and thus there are many `options' for this set. When restricted to $0/1$ vectors, this suggests that the property of an edge \emph{not} being in a balanced set for some $\veg$ is quite restrictive. This can be validated already by considering vertex subsets $U$ of sizes up to $|U|=4$, as follows:

Let $\veg$ be a mixed-sign vector. There exists a \emph{$+$-edge $e$} (i.e., an edge  with $\veg(e)>0$), and there exists a \emph{$-$-edge f} (i.e., an edge  with $\veg(f)<0$). If $\veg$ is a $0/1$ vector, this implies $\veg(e)=1$ and $\veg(f)=-1$. Now, consider the subgraph $G[U]$ induced by the vertices of $e$ and $f$. Suppose that $U$ is an imbalanced set. If $e$ and $f$ are incident, then $E(G[U])\cap\supp(\veg)$ must be a triangle; if $e$ and $f$ are not incident, then in $E(G[U])\cap\supp(\veg)$ there must be an edge between the endpoints of $e$ and $f$. For an edge $e$ to not lie in {\em any} balanced set, it must satisfy this (triangle or connectedness) condition for {\em all} other edges $f$ of opposite sign; this gives a restrictive property overall.

The above example on the importance of incidence and connectedness of edge pairs can serve as a starting point for studying a more general question. 
Recall that $0/1$ mixed-sign vectors which contain one positive entry and one negative entry are circuits \cite{t-84}. This type of circuit corresponds to swapping a pair of edges when moving between two extreme-points of (\ref{MWF}). 
One may then ask how many such swaps can be performed in a single circuit step.
First, let us consider the consequence of combining two circuits whose supports are disconnected from each other.

As a motivating example, let $\veg$ be a 0/1 circuit such that $\veg(e_1)  = 1$, $\veg(e_2) = -1$, and $\veg(e) = 0$ for all $e \in E \setminus\{e_1, e_2\}$. If $\vex = \veg + \veg_{e_3}$ for $e_3 \in E$ such that $e_3$ does not share vertices with $e_1$ and $e_2$
then $\vex$ is a circuit, as dropping (only) $e_3$ causes the balanced pair $(e_2, e_3)$ to become imbalanced. To retain the balanced pair $(e_2,e_3)$, $e_2$ must also be dropped, but this in turn forces $(e_1, e_2)$ to become imbalanced. Thus, a drop of any edge would force the drop of all edges if the result needs to have reduced support (and in particular, retain all balanced pairs).

In the following proofs, we will rely on Lemma \ref{lem:uniquewithinsupport2} whose proof is delayed until Section \ref{sec:mixedsign}.  It says that if a vector $\veg\neq 0$ is not a circuit of (\ref{Rank}), then not only does there exist a circuit $\vey$ with $\supp(B\vey)\subsetneq\supp(B\veg)$, but in fact there exists such a circuit $\vey$ with the additional property that $\supp(\vey)\subsetneq(\supp(\veg)$. 

\begin{lemma}\label{lem:discon_circs}
Let $\veg$ and $\veh$ be nonzero 0/1 vectors such that $V[\supp(\veg)] \cap V[\supp(\veh)] = \emptyset$ (where $V[\supp(\cdot)]$ corresponds to the vertex set induced by the edge set of the support), and such that one of the following holds: 
\begin{enumerate}
    \item $\veg$ is a uniform-sign vector, and $\veh$ is either a mixed-sign circuit of (\ref{Rank}) or a uniform-sign vector that is the opposite sign of $\veg$ or,
    \item $\veg$ and $\veh$ are both  mixed-sign circuits of (\ref{Rank}).
\end{enumerate}
Then $\vez = \veg + \veh$  is also a circuit of (\ref{Rank}).
\end{lemma}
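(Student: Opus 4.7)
The plan is to argue by contradiction. Suppose $\vez = \veg + \veh$ is not a circuit of (\ref{Rank}). By Lemma \ref{lem:uniquewithinsupport2}, there exists a circuit $\vey$ of (\ref{Rank}) with both $\supp(\vey) \subsetneq \supp(\vez)$ and $\supp(B\vey) \subsetneq \supp(B\vez)$. Since $V[\supp(\veg)] \cap V[\supp(\veh)] = \emptyset$, the edge supports $\supp(\veg)$ and $\supp(\veh)$ are also disjoint, so I can write $\vey = \vey_g + \vey_h$ with $\supp(\vey_g) \subseteq \supp(\veg)$ and $\supp(\vey_h) \subseteq \supp(\veh)$. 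The goal is to conclude that $\vey = c\vez$ for some scalar $c$, so $\supp(B\vey) = \supp(B\vez)$, contradicting the strict inclusion.

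The first key observation is structural: for any $U \subseteq V[\supp(\veg)]$, no edge of $\supp(\veh)$ can lie in $E(G[U])$, so $\sum_{e \in E(G[U])} \vez(e) = \sum_{e \in E(G[U])} \veg(e)$ and $\sum_{e \in E(G[U])} \vey(e) = \sum_{e \in E(G[U])} \vey_g(e)$. Hence every $U\subseteq V[\supp(\veg)]$ balanced for $\veg$ is also balanced for $\vey_g$, and an analogous statement holds for $\vey_h$ and $\veh$. In Case 2, and in the Case 1a subcase where $\veh$ is a mixed-sign circuit, I would apply the circuit property of $\veh$ (and of $\veg$ in Case 2) on the induced subgraph to conclude that $\vey_h = \beta\veh$ (and, in Case 2, $\vey_g = \alpha\veg$) for some scalars, unless these vectors are zero.

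The second key step is to use cross-component balanced sets to pin down and link the scalars. For a $+$-edge $e$ of $\veg$ and a $-$-edge $f$ of $\veh$ (which exist in every sub-case, by the sign assumptions on $\veg$ and $\veh$), the four-vertex set $V(e)\cup V(f)$ has induced edges in $\supp(\vez)$ equal to exactly $\{e,f\}$: any further induced edge in $\supp(\veg)\cup\supp(\veh)$ would need both endpoints in $V(e)$ or both in $V(f)$, which by the simple-graph assumption forces it to be $e$ or $f$ itself. Thus $V(e)\cup V(f)$ is balanced for $\vez$, yielding $\vey_g(e) = -\vey_h(f)$. Iterating across all such pairs gives, in Case 2, that $\alpha = \beta$ and so $\vey = \alpha\vez$; in Case 1a, that $\vey_g$ takes the common value $\beta$ on every edge of $\supp(\veg)$ so $\vey_g = \beta\veg$ and $\vey = \beta\vez$; in Case 1b, directly that $\vey_g$ and $\vey_h$ are constant of equal magnitude and opposite sign on $\supp(\veg)$ and $\supp(\veh)$ respectively, so again $\vey$ is a scalar multiple of $\vez$ without ever invoking a circuit property of $\veg$ or $\veh$.

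The main bookkeeping obstacle will be the degenerate possibilities that $\vey_g = 0$ or $\vey_h = 0$. In those cases, the identity $\vey_g(e) = -\vey_h(f)$ along a single balanced cross-pair immediately forces the nonzero half to vanish too, yielding $\vey = 0$ and contradicting $\vey\neq 0$. Once this is handled uniformly across sub-cases 1a, 1b, and 2, the rest of the argument is the scalar-matching calculation above. In every sub-case one ends with $\vey = c\vez$ for some $c\neq 0$, so $\supp(\vey) = \supp(\vez)$, contradicting $\supp(\vey)\subsetneq\supp(\vez)$. Therefore $\vez = \veg + \veh$ is a circuit of (\ref{Rank}).
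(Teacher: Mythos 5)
Your proposal is correct and takes essentially the same route as the paper's proof: assume a circuit $\vey$ with strictly reduced support via Lemma \ref{lem:uniquewithinsupport2}, use cross-component balanced pairs (possible exactly because $V[\supp(\veg)]$ and $V[\supp(\veh)]$ are disjoint) to force $\vey(e)=-\vey(f)$ for opposite-sign edges in different components, and exploit the circuit property of the mixed-sign component(s) to conclude $\vey$ is a scalar multiple of $\vez$, a contradiction. Your step of invoking circuit uniqueness to write $\vey_h=\beta\veh$ (and $\vey_g=\alpha\veg$ in Case 2) is just a repackaging of the paper's argument that restricts $\vey$ to $\supp(\veh)$ and contradicts the support-minimality of $\veh$, so the two proofs differ only in presentation.
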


\begin{proof}
    Suppose for the sake of a contradiction that there exists a circuit $\vey\neq\veo$ with $\supp(B\vey)\subsetneq\supp(B\vez)$.  This implies that if $(e,f)$ is a balanced pair with respect to $\vez$, then we have that $\vey(e)+\vey(f)=0$.  We will leverage this to show that $\vey$ is a scalar multiple of $\vez$, contradicting either that $\vey\neq\veo$ or $\supp(B\vey)\subsetneq\supp(B\vez)$.   By the later Lemma \ref{lem:uniquewithinsupport2}, we may assume further that $\supp(\vey)\subsetneq\supp(\vez)$. 

    \textbf{Case 1:} First, suppose $\veg$ and $\veh$ are both uniform-sign vectors of opposite sign.  Then for any edge $ab\in\supp(\veg)$ and edge $cd\in\supp(\veh)$, $\supp(\vez)\cap E[\{a,b,c,d\}]=\{ab,cd\}$, and since $\vez(ab)=-\vez(cd)$,we have that $(ab,cd)$ is a balanced pair with respect to $\vez$.  That is, for every $+$-edge $e$ and $-$-edge $f$ in $\supp(\vez)$, we have $\vey(e) = -\vey(f)$.  Since, by definition, $\vez(e)=-\vez(f)$ for all such pairs of edges, we have that $\vey$ is a scalar multiple of $\vez$, as desired.

    \textbf{Case 2:} Now suppose $\veg$ is a uniform-sign vector and $\veh$ is a mixed-sign circuit.  Assume without loss of generality that $\veg$ has only $-$-edges.  Then for all edges $e$ in $\supp(\veg)$ and all $+$-edges $f$ in $\supp(\veh)$, $(e,f)$ is a balanced pair in $\vez$, and thus $\vey(e)=-\vey(f)$.  

    First, suppose for the sake of a contradiction that $\vey(e)=0$ for some $e\in\supp(\veg)$.  As above, this implies that for all $+$-edges $f\in\supp(\veh)$, $\vey(f)=0$, and therefore also that for all edges $e'\in\supp(\veg)$, $\vey(e')=0$. However, this combined with the fact that $\supp(B\vey)\subsetneq \supp(B\vez)$ implies that in fact $\supp(B\vey)\subsetneq\supp(B\veh)$, contradicting that $\veh$ is a circuit.

    Thus, we may assume without loss of generality $\vey(e)=-1$ for all $e\in\supp(\veg)$, and therefore that $\vey(f)=1$ for all $+$-edges $f\in\supp(\veh)$.
    Then since $\supp(\vey)\subsetneq\supp(\vez)$, we have that $\vey(e)=0$ for some $-$-edge $e$ in $\supp(\veh)$. Consider the vector $\vey'$ obtained from $\vey$ by restricting to the entries of $\supp(\veh)$.  Since $V[\supp(\veg)]\cap V[\supp(\veh)]=\emptyset$, we have that for all sets $S\subseteq \supp(\veh)$ which are balanced with respect to $\veh$, $S$ is also balanced with respect to $\vez$.  Since $\supp(B\vey)\subsetneq\supp(B\vez)$, we have that any such set is balanced with respect to $\vey$, and therefore also $\vey'$.  Moreover, $\supp(\vey')\subsetneq\supp(\veh)$.  Together, these observations imply that $\supp(B\vey')\subsetneq\supp(B\veh)$, contradicting that $\veh$ is a circuit.

    \textbf{Case 3:} Now suppose $\veg$ and $\veh$ are both mixed-sign circuits.  
    Since $\supp(\vey)\subsetneq\supp(\vez)$, we may assume without loss of generality that there exists an edge in $\supp(\veh)\backslash\supp(\vey)$.   As in Case 2, 
    the vector $\vey'$ obtained by restricting $\vey$ to $\supp(\veh)$ provides a contradiction to the fact that $\veh$ is a circuit.
\end{proof}

In particular, Lemma \ref{lem:discon_circs} implies that combining 0/1 circuits whose edge sets do not share vertices will create a new circuit, provided their combination does not create a uniform-sign vector. Because of this, it becomes interesting to identify simple, {\em connected} edge sets that form circuits. We do so by looking at an extension of the mixed-sign circuit in \cite{bb-many-24,t-84}, i.e., a single swap of adjacent edges. We show that these adjacent swaps can be extended to alternating paths and even length alternating cycles (i.e., alternating between $+$-edges and $-$-edges) of length at least 4 in Lemma \ref{lem:alts}.

\begin{lemma}\label{lem:alts}
Let $\veg$ be a mixed-sign $0/1$ vector. If $\supp(\veg)$ corresponds to an alternating path or an even length alternating cycle of length at least 4, then $\veg$ is a circuit. 
\end{lemma}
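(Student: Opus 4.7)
Proof plan.

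The plan is to argue by contradiction using Lemma \ref{lem:uniquewithinsupport2}: assume $\veg$ is not a circuit and obtain a circuit $\vey$ with $\supp(\vey)\subsetneq\supp(\veg)$ and $\supp(B\vey)\subsetneq\supp(B\veg)$. I will then show that the alternating structure of $\supp(\veg)$ forces $\vey$ to be a scalar multiple of $\veg$ on $\supp(\veg)$, which contradicts either $\supp(\vey)\subsetneq\supp(\veg)$ or $\vey\neq\veo$.

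The key observation is that consecutive edges of the path or cycle form balanced pairs with respect to $\veg$. Write the path/cycle as $e_1,e_2,\ldots,e_k$ with $e_i=v_{i-1}v_i$ (indices mod $k$ in the cycle case). For each consecutive pair, consider the vertex set $U_i=\{v_{i-1},v_i,v_{i+1}\}$. The edges of $G[U_i]$ are $e_i$, $e_{i+1}$, and possibly the chord $v_{i-1}v_{i+1}$; by hypothesis $\supp(\veg)$ consists only of the path/cycle edges, so the chord (if present in $G$) contributes zero. Since the edges alternate in sign, $\veg(e_i)+\veg(e_{i+1})=0$, and thus $U_i$ is a balanced set with respect to $\veg$, so $(e_i,e_{i+1})$ is a balanced pair. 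Moreover, since $\supp(\vey)\subseteq\supp(\veg)$, the same chord contributes zero to any sum over $\vey$, so the condition $\supp(B\vey)\subseteq\supp(B\veg)$ forces $\vey(e_i)+\vey(e_{i+1})=0$ for every such pair.

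Iterating these equations along the path gives $\vey(e_1)=-\vey(e_2)=\vey(e_3)=-\vey(e_4)=\cdots$, so $\vey$ restricted to $\supp(\veg)$ equals $\alpha\veg$ for some scalar $\alpha$; since $\supp(\vey)\subseteq\supp(\veg)$, in fact $\vey=\alpha\veg$. In the cycle case the same relations include the wrap-around pair $(e_k,e_1)$, and this extra constraint is consistent precisely because the cycle has \emph{even} length (alternation closes up). If $\alpha=0$ we contradict $\vey\neq\veo$, and if $\alpha\neq 0$ we contradict $\supp(\vey)\subsetneq\supp(\veg)$. Either way, no such $\vey$ exists and $\veg$ is a circuit.

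The main subtlety — more than a genuine obstacle — is ruling out chords of the path/cycle in $G[U_i]$. This is handled simply by noting that such chords are not in $\supp(\veg)$ nor in $\supp(\vey)$, so they contribute nothing to the sums defining $B\veg$ and $B\vey$, and the balanced-pair propagation goes through cleanly. The only case requiring the even-length hypothesis is the cyclic one, where odd length would make the closure equation $\vey(e_k)+\vey(e_1)=0$ inconsistent with the chain of equalities derived from the other consecutive pairs.
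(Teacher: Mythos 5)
Your proof is correct and follows essentially the same approach as the paper: assume $\veg$ is not a circuit, take $\vey$ with $\supp(B\vey)\subsetneq\supp(B\veg)$, and use balanced pairs to force $\vey$ to be a scalar multiple of $\veg$, a contradiction. The only (cosmetic) differences are that you chain the equalities $\vey(e_i)=-\vey(e_{i+1})$ along consecutive edges and invoke Lemma \ref{lem:uniquewithinsupport2} to kill potential chords, whereas the paper observes directly that \emph{every} $+$-edge forms a balanced pair with \emph{every} $-$-edge of the alternating structure (the chord issue being harmless anyway since the rank constraints for two-element vertex sets already force $\supp(\vey)\subseteq\supp(\veg)$).
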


\begin{proof}
Suppose, to a contradiction, that $\veg$ is a mixed-sign vector as described in the statement and is not a circuit. Then there exists a nonzero $\vey \in \R^{|E|}$ such that $\supp(B\vey) \subsetneq \supp(B\veg)$. This implies that if $(e,f)$ is a balanced pair with respect to $\veg$, then we have that $\vey(e) + \vey(f) = 0$. 
Due to the alternating pattern of the edges in $\supp(\veg)$, every $+$-edge $e \in \supp(\veg)$ is in a balanced pair with every $-$-edge $f \in \supp(\veg)$. Thus, from $\vey(e) + \vey(f) = 0$, we have that $\vey(e) = -\vey(f)$ for all opposite sign edges. Therefore, $\vey$ is a scalar multiple of $\veg$, a contradiction.
\end{proof}

In the proof of Lemma \ref{lem:alts}, we use that every $+$-edge $e$ is contained in a balanced pair $(e,f)$ with all $-$-edges $f$. This is rather restrictive as it requires all edges of different signs to either be incident to each other or to have at least two edges in the support separating their vertices. Relaxing this restriction, i.e., there possibly existing at least one $-$-edge $f$ that is not in a balanced pair with a $+$-edge $e$, we expand the known types of 0/1 circuits to different alternating structures. The first of these is what we call {\em rooted alternating cycles}, i.e., odd length alternating cycles. Figure \ref{fig:rooted_cyc_ex} provides an example of such a cycle.

\begin{definition}[Rooted Alternating Cycle]\label{def:rooted}
    Given a graph $G$ and two forests $F_1$ and $F_2$ in $G$, $\veg = \ve{X}(F_1)-\ve{X}(F_2)$ corresponds to a {\bf rooted alternating cycle} if $\supp(\veg)$ corresponds to an alternating cycle of odd length in $G$, i.e., there is a single node along the cycle, called the root node, which is incident to two edges from the same forest.
\end{definition}

\begin{lemma}\label{lem:root_cyc}
Let $\veg$ be a mixed-sign $0/1$ vector. If $\supp(\veg)$  corresponds to a rooted (odd length) alternating cycle of length at least 5, then $\veg$ is a circuit.
\end{lemma}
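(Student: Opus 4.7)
The plan is to mirror the strategy of Lemma \ref{lem:alts}: assume $\veg$ is not a circuit, exploit balanced pairs along the cycle to force any supposed smaller-support witness $\vey$ to be a scalar multiple of $\veg$, and derive a contradiction.

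I would label the rooted alternating cycle of odd length $2k+1 \geq 5$ as $v_0,v_1,\ldots,v_{2k},v_0$ with $e_i = v_iv_{i+1}$ (indices mod $2k+1$), placing the root at $v_0$ so that $e_0$ and $e_{2k}$ are both $+$-edges while the remaining $e_i$ alternate in sign; thus $\veg(e_j) = (-1)^j$ for $0 \leq j \leq 2k-1$ and $\veg(e_{2k}) = 1$. Suppose for contradiction that $\veg$ is not a circuit. Using Lemma \ref{lem:uniquewithinsupport2} (proved later in Section \ref{sec:mixedsign}), select a nonzero $\vey$ with $\supp(B\vey) \subsetneq \supp(B\veg)$ and additionally $\supp(\vey) \subseteq \supp(\veg)$, so $\vey$ is supported only on cycle edges.

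The core step is to identify enough balanced sets. For each $i \in \{0, 1, \ldots, 2k-1\}$, take $U_i = \{v_i, v_{i+1}, v_{i+2}\}$. Because the cycle has length at least $5$, the only edges of the cycle with both endpoints in $U_i$ are $e_i$ and $e_{i+1}$; any other edge of $G[U_i]$ lies outside $\supp(\veg)$ and contributes $0$. Since $e_i$ and $e_{i+1}$ are opposite-sign, $U_i$ is balanced with respect to $\veg$, whence $\vey(e_i) + \vey(e_{i+1}) = 0$. The triple $\{v_{2k}, v_0, v_1\}$, by contrast, is imbalanced because $e_0$ and $e_{2k}$ share the sign of the root, so it imposes no equation. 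Chaining the $2k$ relations $\vey(e_{i+1}) = -\vey(e_i)$ yields $\vey(e_j) = (-1)^j \vey(e_0) = \vey(e_0) \cdot \veg(e_j)$ for every $j$, and since $\supp(\vey) \subseteq \supp(\veg)$ this forces $\vey = c\veg$ for $c = \vey(e_0)$. Either $c = 0$, so $\vey = \veo$, or $c \neq 0$, so $\supp(B\vey) = \supp(B\veg)$; both contradict the choice of $\vey$.

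I expect the only subtlety to lie in justifying that the length hypothesis is essential at the balanced-set step: on a $3$-cycle, the triple $U_0 = \{v_0, v_1, v_2\}$ is the entire vertex set of the triangle and pulls in the third cycle edge, destroying the sign-cancellation and thereby blocking the argument. For length at least $5$, the three-vertex window $U_i$ cleanly isolates the consecutive pair $(e_i, e_{i+1})$ inside $\supp(\veg)$, and the remainder of the argument is essentially bookkeeping propagated around the cycle.
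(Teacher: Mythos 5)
Your proof is correct and follows essentially the same strategy as the paper: assume a nonzero $\vey$ with $\supp(B\vey)\subsetneq\supp(B\veg)$ exists, use balanced sets determined by the alternating structure to force $\vey$ to be a scalar multiple of $\veg$, and conclude by contradiction. The only cosmetic difference is that you chain the $2k$ consecutive-triple balanced sets around the cycle, whereas the paper links the two root-incident $+$-edges to the rest via balanced pairs through a common third $+$-edge; both handle the off-support entries of $\vey$ and the role of the length-$\geq 5$ hypothesis correctly.
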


\begin{proof}
Suppose, to a contradiction, that $\veg$ is a mixed-sign vector as in the statement and not a circuit. Then there exists a nonzero $\vey \in \R ^{|E|}$ such that $\supp(B\vey) \subsetneq \supp(B\veg)$. This implies that if $(e,f)$ is a balanced pair with respect to $\veg$, then we have that $\vey(e) + \vey(f) = 0$. We aim to show that for {\em any} $+$-edge $e$ and $-$-edge $f$, $\vey(e) = -\vey(f)$ (even if $e$ and $f$ are not in a balanced pair), i.e., $\vey$ is a scalar multiple of $\veg$ and $\supp(B\vey) = \supp(B\veg)$. 

Note that every $+$-edge $e$ not incident to the root node forms a balances pair with every $-$-edge $f$ also not incident to the root node. Therefore, we need only show that the edges connected to the root node satisfy $\vey(e) = -\vey(f)$. Without loss of generality, assume the edges $e_0,e_1 \in \supp(\veg)$ connected to the root node are $+$-edges. Due to the lower bound of $5$ on the length of the cycle, $e_0$ and $e_1$ are contained in at least one balanced pair with the respective $-$-edges, $f_0$ and $f_1$ which are incident to them, i.e., $(e_0,f_0)$ and $(e_1,f_1)$ are balanced pairs. 
Then if we let $e'$ be a $+$-edge distinct from $e_0$ and $e_1$, we have that $e'$ is in a balanced pair with both $f_0$ and $f_1$.
Thus, we have $\vey(e_0) = -\vey(f_0) = \vey(e') = -\vey(f_1) = \vey(e_0)$.
Therefore, $\vey$ is a scalar multiple of $\veg$, a contradiction. 
\end{proof}

\begin{figure}[htb]
        \centering
        \begin{tikzpicture}
            \tikzset{vertex/.style = {shape=circle,draw,minimum size=2em}}
            \node[vertex] (4) at (1,-0.5) {4};
            \node[vertex] (3) at (3,-0.5) {3};
            \node[vertex] (5) at (0,0.5) {5};
            \node[vertex] (1) at (2,0.5) {1};
            \node[vertex] (2) at (4,0.5) {2};
            \draw[thick,dashed] (5)--(1) {};
            \draw[thick,dashed] (1)--(2) {};
            \draw[thick] (2)--(3) {};
            \draw[thick,dashed] (3)--(4) {};
            \draw[thick] (4)--(5) {};
            \end{tikzpicture}
            \caption{A rooted alternating cycle with root node 1. Edges of the same line pattern belong to the same forest.}
            \label{fig:rooted_cyc_ex}
    \end{figure}
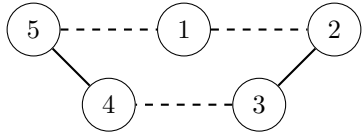

Note that in Lemma \ref{lem:root_cyc} we require a length of at least 5 to avoid the triangle case discussed earlier. We can extend the arguments to what we call {\em pseudo-alternating paths} and {\em pseudo-alternating cycles} wherein single edges of a given sign are replaced with paths of that sign; see Definition \ref{def:pseudo-alt} for a formal description. We call these types of paths and cycles pseudo-alternating because they can be contracted to true alternating paths and cycles; Figure \ref{fig:pseudo_exs} shows an example.

\begin{definition}[Pseudo-alternating Structure]\label{def:pseudo-alt}
A pseudo-alternating structure on a path or cycle $H$ is a sequence of subpaths $(H_i)_{i \in \Z}$ of $H$ such that all $e \in E(H_i)$ are $+$-edges for all even $i$ and all $f \in E(H_i)$ are $-$-edges for all odd $i$. Each subpath can be contracted to a single $+$- or $-$-edge to form a true alternating structure (on the appropriate contraction of $H$).  
\end{definition} 

\begin{figure}[htb]
\centering
\begin{subfigure}{0.49\textwidth}
    \centering
    \begin{tikzpicture}
            \tikzset{vertex/.style = {shape=circle,draw,minimum size=2em}}
            \node[vertex] (1) at (0,0) {1};
            \node[vertex] (2) at (1.5,0) {2};
            \node[vertex] (3) at (3,0) {3};
            \node[vertex] (4) at (4.5,0) {4};
            \node[vertex] (6) at (3.75,-1.5) {5};
            \node[vertex] (7) at (2.25,-1.5) {6};
            \node[vertex] (8) at (0.75,-1.5) {7};
            \draw[thick] (1)--(2) {};
            \draw[thick, dashed] (2)--(3) {};
            \draw[thick,dashed] (3)--(4) {};
            \draw[thick] (4)--(6) {};
            \draw[thick,dashed] (6)--(7) {};
            \draw[thick,dashed] (7)--(8) {};
            \draw[thick] (8)--(1) {};
            \end{tikzpicture}
    \caption{Pseudo-alternating cycle}
    \label{fig:pseudo-root}
\end{subfigure}
\hfill
\begin{subfigure}{0.49\textwidth}
    \centering
    \begin{tikzpicture}
            \tikzset{vertex/.style = {shape=circle,draw,minimum size=2em}}
            \node[vertex] (3) at (1.5,0) {3};
            \node[vertex] (4) at (4.5,0) {4};
            \node[vertex] (6) at (3.75,-1.5) {5};
            \node[vertex] (8) at (0.75,-1.5) {7};
            \draw[thick,dashed] (3)--(4) {};
            \draw[thick] (4)--(6) {};
            \draw[thick,dashed] (6)--(8) {};
            \draw[thick] (8)--(3) {};
            \end{tikzpicture}
    \caption{Alternating cycle after contractions.}
    \label{fig:cont-pseudo-root}
    \end{subfigure}
        \caption{A pseudo-alternating cycle and its underlying structures after contraction. Edges of the same line pattern belong to the same forest.}
            \label{fig:pseudo_exs}
    \end{figure}
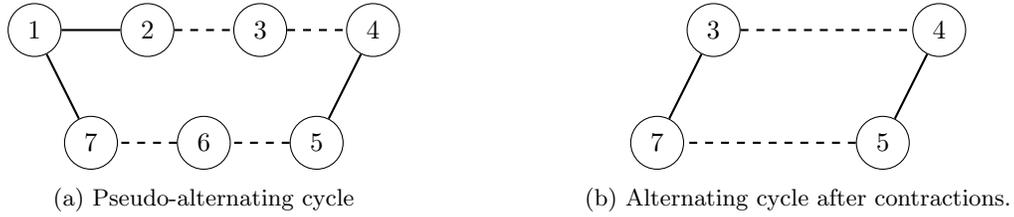

\begin{lemma}\label{lem:pseudo}
Let $\veg$ be a mixed-sign $0/1$ vector, and let the edges in $\supp(\veg)$ form one of the following structures in $G$:
\begin{enumerate}
    \item a pseudo-alternating path with at least 4 subpaths $H_i$ all having length at least 2, or
    \item a pseudo-alternating cycle with at least 4 subpaths $H_i$ where $|H_i| \geq 2$ for even $i$ and $|H_i| \geq 3$ for odd $i$.
\end{enumerate}
Then $\veg$ is a circuit.
\end{lemma}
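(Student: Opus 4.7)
\noindent\emph{Sketch.} We follow the template of Lemmas \ref{lem:alts} and \ref{lem:root_cyc}. Suppose for contradiction that $\veg$ is not a circuit, so there exists $\vey\neq\veo$ with $\supp(B\vey)\subsetneq\supp(B\veg)$, where $B$ denotes the constraint matrix of (\ref{Rank}); by the forthcoming Lemma \ref{lem:uniquewithinsupport2}, we may further assume $\supp(\vey)\subseteq\supp(\veg)$. The goal is to force $\vey$ to be a scalar multiple of $\veg$, which yields the desired contradiction (either $\vey\neq\veo$ fails if the scalar is $0$, or $\supp(B\vey)\subsetneq\supp(B\veg)$ fails otherwise). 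The main tool is balanced pairs: for two opposite-sign support edges $e,f$ whose positions along the underlying path or cycle differ by $1$ (adjacent) or by at least $3$, no other support edge has both endpoints in $V(e)\cup V(f)$, so $(e,f)$ is a balanced pair and hence $\vey(e)+\vey(f)=0$. Position difference exactly $2$ is the only failure mode, because the intervening opposite-sign support edge then lies inside $V(e)\cup V(f)$. The hypothesized subpath lengths are tuned precisely to make enough such balanced pairs available.

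For the path case, because there are at least $4$ subpaths, $H_0$ and $H_3$ are non-adjacent and of opposite signs, and any $(e,f)\in H_0\times H_3$ is separated by at least $|H_1|+|H_2|+1\geq 5$; hence every such pair is balanced. Fixing any $e^{\ast}\in H_0$ and $f^{\ast}\in H_3$ forces $\vey$ to be constant on $H_0$ with some value $\alpha$ and on $H_3$ with value $-\alpha$. We then propagate constancy iteratively to each remaining subpath $H_i$: the first edge of $H_i$ is related to the last edge of $H_{i-1}$ via an adjacent balanced pair, while every other edge of $H_i$ is at position difference at least $|H_{i-1}|+1\geq 3$ from the first edge of $H_{i-1}$ and therefore forms another balanced pair. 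Both relations impose the same value on $H_i$ once $\vey$ is constant on $H_{i-1}$, so by induction $\vey=\alpha\veg$, the desired contradiction.

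For the cycle, the cyclic balanced-pair condition reads: $(e,f)$ is a balanced pair iff its cyclic distance equals $1$ or lies in $[3,n-3]$. The assumption $|H_i|\geq 3$ for odd $i$ is essential here, because a $4$-subpath cycle with all $|H_i|=2$ admits a nonzero $\vey$ with $\supp(B\vey)\subsetneq\supp(B\veg)$ that is not a scalar multiple of $\veg$; concretely, the bipartite graph $\Gamma$ on $+$-edges and $-$-edges whose edges are the balanced pairs then splits into two components. Under the stated hypothesis, however, $\Gamma$ is connected: any two consecutive $+$-edges in a short even-indexed subpath share a common neighbor in $\Gamma$, namely an interior edge of an adjacent odd-indexed subpath of length $\geq 3$, which has cyclic distance at least $3$ from both, and a symmetric argument connects any two consecutive $-$-edges. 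Connectivity of $\Gamma$ again forces $\vey$ to be a scalar multiple of $\veg$. The main obstacle is the careful case analysis needed to verify this connectivity in the cycle, which also explains why the stated length condition is essentially sharp.
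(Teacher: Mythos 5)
Your path case is correct and is essentially a reorganization of the paper's argument: you anchor constancy of $\vey$ on $H_0$ and $H_3$ (legitimate, since any pair from $H_0\times H_3$ is separated by at least $|H_1|+|H_2|\geq 4$ intermediate edges and hence balanced) and then propagate subpath by subpath using one adjacent pair and one distance-$\geq 3$ pair per step. The paper instead argues locally and uniformly for paths and cycles: fix an arbitrary edge $e_0$; the only opposite-sign edges not forming a balanced pair with $e_0$ are the at most two edges at distance exactly $2$, and short chains of three balanced pairs force $\vey$ on those as well. Your reduction of the cycle case to connectivity of the bipartite balanced-pair graph $\Gamma$ is also a sound strategy in principle, and the invocation of Lemma \ref{lem:uniquewithinsupport2} is harmless (for (\ref{Rank}) the single-edge rows already force $\supp(\vey)\subseteq\supp(\veg)$).

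The gap is in the cycle case, precisely in the connectivity verification that you yourself flag as ``the main obstacle'' and then do not carry out correctly. Your proposed common neighbor fails in configurations the lemma allows: if $H_i$ is an even subpath with $|H_i|=2$, edges $e_1,e_2$ with $e_2$ abutting an odd subpath $H_{i+1}$ of length exactly $3$ with edges $f_1,f_2,f_3$ in order, then the unique interior edge $f_2$ of $H_{i+1}$ is at cyclic distance $2$ from $e_2$, so $(e_2,f_2)$ is \emph{not} a balanced pair and $f_2$ is not a common $\Gamma$-neighbor of $e_1,e_2$ (and the interior edge of $H_{i-1}$ fails symmetrically for $e_1$ when $|H_{i-1}|=3$). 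Worse, the ``symmetric argument'' for two consecutive $-$-edges cannot be run as stated at all, since even subpaths are only required to have length $2$ and then have no interior edges. The connectivity claim is nevertheless true and repairable: for instance, the far end edge $f_3$ of $H_{i+1}$ is at distance $3$ from $e_2$ and $4$ from $e_1$; more generally, only the at most four positions at cyclic distance exactly $2$ from one of two consecutive same-sign edges are excluded as common neighbors, while there are at least six $-$-edges (resp.\ at least four $+$-edges, not all of which can occupy excluded positions, because an odd subpath containing two consecutive $-$-edges has length at least $3$, so one of the two positions adjacent to the pair is a $-$-edge), and adjacent opposite-sign edges are already $\Gamma$-adjacent. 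As written, however, the cycle half of the lemma is not established; you need either this corrected count or the paper's local argument handling the at most two distance-$2$ partners of an arbitrary $e_0$.
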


\begin{proof}
Suppose $\veg$ is not a circuit and let $\vey\neq\veo$ be such that $\supp(B\vey)\subsetneq\supp(B\veg)$.  Note that this implies that if $(e,f)$ is a balanced pair with respect to $\veg$, then we have that $\vey(e)+\vey(f)=0$.  We will leverage this to show that $\vey$ is, in fact, just a scalar multiple of $\veg$, contradicting either that $\vey\neq\veo$ or that $\supp(B\vey)\subsetneq\supp(B\veg)$.
Specifically, we will show that for \textit{any} $-$-edge $e$ and $+$-edge $f$, $\vey(e)=-\vey(f)$ (even if they do not form a balanced pair). This will prove the claim.  Note that if $e=tu$ is a $-$-edge and $f=vw$ is a $+$ edge, then $(e,f)$ is a balanced pair unless $E[\{t,u,v,w\}]$ is a path of length 3, i.e., one of $t$ or $u$ is adjacent to one of $v$ or $w$.

Let $e_0\in \supp(\veg)$, and without loss of generality suppose it is a $-$-edge. Then there exist at most two $+$-edges $f_1,f_2 \in \supp(\veg)$ such that $(e_0, f_1)$ and $(e_0, f_2)$ are not balanced pairs, i.e., $(e_0, f)$ is a balanced pair for all $+$-edges $f$ such that $f \neq f_1, f_2$. In fact, all edges in $\supp(\veg)$ are contained in at least two balanced pairs due to the minimum length requirements for the pseudo-alternating structures in the statement.

First suppose only $f_1$ exists, i.e., there is only one $+$-edge such that $(e_0,f_1)$ is not a balanced pair. 
By the minimum length requirements, there exists a $-$-edge $e_1 \neq e_0 \in \supp(\veg)$ such that $(e_1,f_1)$ is a balanced pair and there exists a $+$-edge $f_3 \neq f_1$ such that $(e_1,f_3)$ is a balanced pair. Because $f_1$ is the only $+$-edge that does not form a balanced pair with $e_0$, $(e_0, f_3)$ is a balanced pair. 
Then we have that $\vey(f_1)=-\vey(e_1)=\vey(f_3)=-\vey(e_0)$.  Thus, for all $+$-edges $f\in\supp(\veg)$, we have that $\vey(e_0)=-\vey(f)$, as desired.

Now, suppose that both $f_1$ and $f_2$ exist (and see Figure \ref{fig:alternating_proof}). Then $f_1$ and $f_2$ are both distance 2 from $e_0$ in $\supp(\veg)$.  Since each subpath has length at least two, without loss of generality the edge $e_1$ between $f_1$ and $e_0$ in $\supp(\veg)$ is a $-$-edge.  Likewise, the other edge $f_3$ incident to $f_1$ in $\supp(\veg)$ is a $+$-edge.  We need to show that $\vey(f_1)=-\vey(e_0)=\vey(f_2)$.   

\textbf{Case 1: } Suppose $\supp(\veg)$ is a pseudo-alternating cycle. Then since $e_0$ is a $-$-edge, it is in a subpath $H_i$ with $i$ odd, and so $|H_i|\geq 3$.  In this case, both edges incident to $e_0$ in $\supp(\veg)$ are $-$-edges.  Let the one between $e_0$ and $f_2$ be denoted $e_2$.  Since all subpaths have length at least two, $f_2$ is incident to a $+$-edge $f_4$.   We have that $(f_1,e_1)$, $(e_1,f_4)$, and $(e_0,f_4)$ are balanced pairs, and so $\vey(f_1)=-\vey(e_0)$.  Likewise we have that $(f_2,e_2)$, $(e_2,f_3)$, and $(f_3,e_0)$ are balanced pairs, and so $\vey(f_2)=-\vey(e_0)$, as desired.

\textbf{Case 2:} Suppose $\supp(\veg)$ is a pseudo-alternating path.  If the edge between $e_0$ and $f_2$ is a $-$-edge, then the argument for Case 1 works.  Thus, we now assume that the edge between $e_0$ and $f_2$ is a  $+$-edge $f_4$.  Since there are at least four subpaths, there exists a $-$-edge $e_2$ such that both $(e_2,f_1)$ and $(e_2,f_4)$ are balanced pairs.  Then we have that $(e_0,f_4)$, $(f_4,e_2)$, and $(e_2,f_1)$ are balanced pairs, and so $\vey(f_1)=-\vey(e_0)$.  Moreover, $(f_1,e_1)$ and $(e_1,f_2)$ are balanced pairs, and so $\vey(f_2)=\vey(f_1)=-\vey(e_0)$, as desired.

\end{proof}

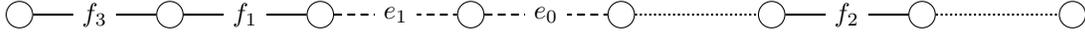
\begin{figure}[htb]
        \centering
        \begin{tikzpicture}
            \tikzset{vertex/.style = {shape=circle,draw,minimum size=1em}}
            \node[vertex] (1) at (0,0) {};
            \node[vertex] (2) at (2,0) {};
            \node[vertex] (3) at (4,0) {};
            \node[vertex] (4) at (6,0) {};

            \node[vertex] (a) at (-2,0) {};
            \node[vertex] (b) at (-4,0) {};

            \node[vertex] (c) at (8,0) {};
            \node[vertex] (d) at (10,0) {};

             \draw[thick,densely dashed] (1) to node[midway, fill=white] {$e_1$} (2) {};
             \draw[thick,densely dashed] (2) to node[midway, fill=white] {$e_0$} (3) {};
             \draw[thick,densely dotted] (3) to (4) {};

             \draw[thick] (1) to node[midway, fill=white] {$f_1$} (a) {};
             \draw[thick] (a) to node[midway, fill=white] {$f_3$} (b) {};

             \draw[thick] (4) to node[midway, fill=white] {$f_2$} (c) {};
             \draw[thick, densely dotted] (c) to (d) {};
            
            \end{tikzpicture}
            \caption{The case in the proof of Lemma \ref{lem:pseudo} wherein both $f_1$ and $f_2$ exist. A depiction of the edges of distance at most 3 from $e_0$ in $\supp(\veg)$. Solid edges are $+$-edges and dashed edges $e_i$ are $-$-edges. Dotted edges could be either $+$- or $-$-edges, but at least one of them is a $+$-edge.}
            \label{fig:alternating_proof}
    \end{figure}

Note that the length requirements in Lemma \ref{lem:pseudo} apply both to the number of subpaths and to the minimum length of each subpath. These technical conditions ensure not only that the graph is large enough for the balanced pairs used in the proof to exist, but also that there are few edges like $f_1$ which one needs to consider.
What matters is not just the diameter of the graph formed from $\supp(\veg)$, but also the distance between edges of opposite sign. This distance determines whether all edges of opposite sign are far enough apart to form balanced pairs, such that one can connect any $+$-edge $e$ and $-$edge $f$ via said balanced pairs and show $\vey(e) = -\vey(f)$.

For example, suppose $\supp(\veg)$ forms a path whose first edge is a $+$-edge and all remaining edges are $-$-edges. Then $\veg$ is not a circuit: dropping the $+$-edge $e$ forces one to drop all $-$-edges \textit{except} the $-$-edge $f$ which is one edge away from $e$ along the path.  Indeed, the circuit $\veg_{f}$ satisfies that $\supp(B\veg_f)\subsetneq\supp(B\veg)$. Even though $\supp(\veg)$ could have arbitrarily large diameter, it is still not a circuit.

\subsection{General Mixed-Sign Vectors}\label{sec:mixedsign}

Next, we establish more general properties distinguishing 0/1 circuits from non-circuits. In Lemma \ref{lem:discon_circs}, we saw that in many situations, vectors whose supports induce disconnected graphs are circuits. In turn, non-circuits are found more easily when examining vectors whose supports induce connected graphs. As we will see in Theorem \ref{thm:notacircuit}, we are able to significantly expand on this observation: either there exists a unit vector that can be used to disprove that a given 0/1 vector is a circuit, or the support of the non-circuit is 1) connected, and 2) the union of two graphs with low diameter.  Note that these two properties together imply that the supports of such non-circuits are also of low diameter.

We can simplify our discussion by proving that non-circuits $\veg$ can be certified to not be circuits by comparing to vectors $\vey$ with $\supp(\vey) \subsetneq \supp(\veg)$. 

\begin{lemma}\label{lem:uniquewithinsupport2}
Let $\veg \neq \ve0$ be a non-circuit of (\ref{Rank}). Then there exists a circuit $\vey$ of (\ref{Rank}) with $\supp(B\vey)\subsetneq\supp(B\veg)$ and $\supp(\vey) \subsetneq \supp(\veg)$.
\end{lemma}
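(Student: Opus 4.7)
The plan is to leverage a simple but decisive feature of the rank constraint matrix $B$: for every edge $e=uv \in E$, the set $U=\{u,v\}$ gives a row of $B$ whose support is exactly $\{e\}$, so that $(B\vex)_U = \vex(e)$. Thus for any vectors, $\supp(B\vey) \subseteq \supp(B\vex)$ automatically forces $\supp(\vey) \subseteq \supp(\vex)$: every edge $e=uv \in \supp(\vey)$ witnesses the single-edge row indexed by $\{u,v\}$, which must then lie in $\supp(B\vex)$, forcing $e \in \supp(\vex)$. The entire proof hinges on combining this elementary fact with one scalar-elimination step.

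Starting from the hypothesis that $\veg$ is a non-circuit, iterate the defining property (passing to vectors with strictly smaller $B$-support, which must terminate since $\supp(B\cdot)$ is a finite row set) to obtain a \emph{circuit} $\vey'$ with $\supp(B\vey') \subsetneq \supp(B\veg)$. The key observation immediately gives $\supp(\vey') \subseteq \supp(\veg)$. If this inclusion is strict, take $\vey := \vey'$ and we are done. Otherwise $\supp(\vey') = \supp(\veg)$, and the idea is to combine $\veg$ and $\vey'$ to shrink the edge-support.

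To that end, pick any $e \in \supp(\veg) = \supp(\vey')$ and set $\vez := \veg - \tfrac{\veg(e)}{\vey'(e)}\vey'$. By construction $\vez(e) = 0$, so $\supp(\vez) \subsetneq \supp(\veg)$. If $\vez = \veo$, then $\veg$ is a scalar multiple of the circuit $\vey'$ and hence itself a circuit, contradicting the hypothesis; so $\vez \neq \veo$. Linearity together with $\supp(B\vey') \subseteq \supp(B\veg)$ yields $\supp(B\vez) \subseteq \supp(B\veg)$. If $\vez$ happens to be a circuit, take $\vey := \vez$; the $B$-support inclusion is strict because the row for $\{u,v\}$ (where $e=uv$) lies in $\supp(B\veg) \setminus \supp(B\vez)$. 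If $\vez$ is not a circuit, apply the iteration once more to $\vez$ to obtain a circuit $\vey$ with $\supp(B\vey) \subsetneq \supp(B\vez) \subseteq \supp(B\veg)$; the key observation then gives $\supp(\vey) \subseteq \supp(\vez) \subsetneq \supp(\veg)$, delivering both strict containments. The only conceptual step is recognizing that the single-edge rows are already embedded in (\ref{Rank}) via the two-vertex sets; once that is noticed, the rest is one elimination and a routine case split, and I do not expect any further obstacle.
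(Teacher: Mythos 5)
Your proposal is correct and follows essentially the same route as the paper: take a circuit with strictly smaller $B$-support, and if its edge-support equals $\supp(\veg)$, subtract the scalar multiple $\tfrac{\veg(e)}{\vey'(e)}\vey'$ to zero out a coordinate, then descend to a circuit once more if needed. The only presentational difference is that you state explicitly the fact (used implicitly in the paper via the two-vertex sets $U=\{u,v\}$) that $\supp(B\vey)\subseteq\supp(B\veg)$ forces $\supp(\vey)\subseteq\supp(\veg)$, which is a welcome clarification rather than a deviation.
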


\begin{proof}
    Let $\veg \neq \ve0$ be a non-circuit of (\ref{Rank}). As $\veg$ is a non-circuit, there exists a circuit $\vey$ of (\ref{Rank}) with $\supp(B\vey)\subsetneq\supp(B\veg)$. If $\vey$ satisfies $\supp(\vey)\subsetneq\supp(\veg)$, we are done. Thus, let us assume that $\vey$ satisfies $\supp(\vey)=\supp(\veg)$.

Let $B_0$ denote the row submatrix of $B$ consisting of exactly those rows $i$ with $(B\veg)_i=0$, i.e., the rows for balanced sets with respect to $\veg$. As $\supp(B\vey)\subsetneq\supp(B\veg)$, we have $\lambda \vey \neq \veg$ and we know that 
$B_0(\veg + \lambda \vey) = 0$ for all $\lambda \in \mathbb{R}$. As $\vey\neq \veg$, there exists an edge $e \in \supp(\vey)=\supp(\veg)$ such that $\veg(e) = \lambda' \vey(e) \neq 0$ for some nonzero $\lambda'\neq 1$. Since $\vey\neq -\veg$, the vector $\vez=\veg-\lambda'\vey$ satisfies $\vez\neq \ve0$, and by construction it satisfies $B_0 \vez = 0$ and $\vez(e) = 0\neq\vey(e)$, and thus $\supp(\vez)\subsetneq\supp(\vey)=\supp(\veg)$ and $\supp(B\vez)\subsetneq\supp(B\veg)$. 
If $\vez$ is a circuit, we are done. If not, then there exists a circuit $\vey'$ with $\supp(B\vey')\subsetneq\supp(B\vez)\subsetneq\supp(B\veg)$, in which case $\supp(\vey')\subseteq\supp(\vez)\subsetneq\supp(\veg)$, as desired.
\end{proof}

Recall that, for a vector $\veg$, a vector $\vey$ with $\supp(\vey) \subsetneq \supp(\veg)$ can be constructed through the drop of an edge set or vector; c.f., Definitions \ref{def:drop1} and \ref{def:drop2}. First, let us take a closer look at a situation where there exists a uniform-sign vector $\vez$ that can be dropped from a vector $\veg$ to obtain a vector $\vey = \veg - \vez$ of reduced support, i.e., $\supp(B\vey) \subsetneq \supp(B\veg)$. 
We show that if $\vey$ has reduced support and $\vez$ is a uniform-sign vector then there exists a unit vector $\vez'$ such that $\vey' =\veg-\vez'$ satisfies $\supp(B\vey') \subsetneq \supp(B\veg)$.

\begin{lemma}\label{lem:singleedge}
Let $\veg$ be a (mixed-sign) vector. If a uniform-sign vector $\vez$ can be dropped from $\veg$ to give a vector $\vey=\veg-\vez$ with $\supp(B\vey)\subsetneq \supp(B\veg)$, then there exists an edge that can be dropped from $\veg$ to reduce the support.
\end{lemma}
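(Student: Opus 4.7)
The plan is to show that any edge in $\supp(\vez)$ is itself an edge that can be dropped from $\veg$ to reduce the support, by proving that no such edge lies in a balanced set of $\veg$ and then invoking the converse direction of Lemma \ref{lem:singleedgedrop}.

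First, by passing from $\vez$ to $-\vez$ if necessary (and noting that the argument below depends only on all nonzero entries of $\vez$ having the same sign, not on which sign), I would assume without loss of generality that $\vez \geq \veo$. Observe that since $\vez$ is dropped from $\veg$, by Definition \ref{def:drop2} we have $\supp(\vez) \subseteq \supp(\veg)$ and $\vez \neq \veo$.

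Next, I would unpack the hypothesis $\supp(B\vey) \subsetneq \supp(B\veg)$: for every balanced set $U$ of $\veg$ (i.e., every $U$ with $\sum_{e \in E(G[U])} \veg(e) = 0$), the corresponding row of $B\vey$ is also zero, so $\sum_{e \in E(G[U])} \vey(e) = 0$. Subtracting these two identities gives
\[
\sum_{e \in E(G[U])} \vez(e) \;=\; \sum_{e \in E(G[U])} \veg(e) \;-\; \sum_{e \in E(G[U])} \vey(e) \;=\; 0
\]
for every balanced set $U$ of $\veg$. Because $\vez \geq \veo$, a sum of nonnegative terms equaling zero forces each term to vanish, so $\vez(e) = 0$ for every $e \in E(G[U])$. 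Equivalently, no edge of $\supp(\vez)$ lies in any balanced set of $\veg$.

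Finally, since $\supp(\vez)$ is nonempty, I would pick any $e \in \supp(\vez) \subseteq \supp(\veg)$. Because $e$ belongs to no balanced set of $\veg$, the converse direction of Lemma \ref{lem:singleedgedrop} guarantees that dropping a suitable scaled unit vector $\lambda \veg_e$ from $\veg$ yields a vector with strictly smaller support under $B$, completing the proof. There is no real obstacle here; the only slightly delicate point is the sign-based ``sum-zero forces termwise zero'' step, which is exactly where the uniform-sign hypothesis on $\vez$ is used (and is precisely what fails for general mixed-sign $\vez$, explaining why the single-edge drop characterization requires this extra assumption).
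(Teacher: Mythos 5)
Your proof is correct and follows essentially the same route as the paper's: the uniform-sign hypothesis rules out cancellation of $\vez$ over any balanced set of $\veg$, so edges in $\supp(\vez)$ lie in no balanced set, and the converse direction of Lemma \ref{lem:singleedgedrop} finishes the argument. The only cosmetic difference is that the paper argues by contradiction about the single newly-zeroed edge $f\in\supp(\veg)\setminus\supp(\vey)$, whereas you show directly that every edge of $\supp(\vez)$ can serve as the dropped edge.
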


\begin{proof}
    We consider the vector $\vey= \veg-\vez$ resulting from a drop of $\vez$ from $\veg$. Recall that this drop guarantees that there exists an edge $f\in\supp(\veg)\setminus\supp(\vey)$.
    Suppose that there exists a balanced set $U \subseteq V$ with respect to $\veg$ with $f \in E(G[U])$.
    Then we have $\sum_{e \in E(G[U])}\veg(e) = 0$, but since $\vez$ is a uniform-sign vector, $\sum_{e \in E(G[U])}\vey(e) \neq 0$. Thus, $\supp(B\vey)$ is not a subset of $\supp(B\veg)$, a contradiction. This implies that all sets $U'\subseteq V$ containing $f$ are imbalanced with respect to $\veg$. By Lemma \ref{lem:singleedgedrop}, $f$ can be dropped  from $\veg$ to arrive at a reduced support. This proves the claim.
\end{proof}

Lemma \ref{lem:singleedge} means that, if it is possible to reduce the support $\supp(B\veg)$ of vector $\veg$ by dropping a uniform-sign vector, then there exists just a single edge $f$ which is not in any balanced set, and it suffices to drop $f$ to reduce the support. In other words, it suffices to drop a scaling of the unit vector $z_f$ to arrive at $\vey = \veg - \lambda\vez_f$, as doing so gives $\sum_{e \in E(G[U])}\veg(e) = \sum_{e \in E(G[U])}\vey(e)=0$ for all balanced sets with respect to $\veg$.

Let us exchange the roles of $\vey$ and $\vez_f$. Let $\veg$ be a (mixed-sign) vector and suppose dropping $\lambda \vez_f$ from $\veg$ gives $\vey=\veg-\lambda \vez_f$ with $\supp(B\vey)\subsetneq \supp(B\veg)$. Observe that $\lambda \vez_f=\veg-\vey$, i.e., dropping the (mixed-sign) vector $\vey$ from $\veg$ gives a (scaled) unit vector $\vez_f$. We obtain $\supp(B\vez_f)\subsetneq \supp(B\veg)$, as clearly $\supp(\vez_f)\subsetneq \supp(\veg)$ and any balanced sets with respect to $\veg$ do not contain $f$. This proves the following corollary to Lemmas \ref{lem:singleedgedrop} and \ref{lem:singleedge}. 

\begin{corollary}\label{cor:unitvectorcontainment}
Let $\veg$ be a (mixed-sign) vector. The (scaled) unit vector $\lambda \veg_e$ corresponding to an edge $e$ (and some $\lambda \geq 0$) can be dropped from $\veg$ to reduce the support if and only if the unit vector $\veg_e$ satisfies $\supp(B\veg_e)\subsetneq \supp(B\veg)$.
\end{corollary}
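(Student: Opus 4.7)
The plan is to prove the biconditional by unwrapping the two directions and reducing each to an application of Lemma \ref{lem:singleedgedrop}, using the observation that ``$e$ lies in no balanced set with respect to $\veg$'' is exactly the statement $\supp(B\veg_e)\subseteq\supp(B\veg)$ in disguise.

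For the forward direction, suppose $\lambda\veg_e$ with $\lambda=\veg(e)$ can be dropped from $\veg$ to yield $\vey=\veg-\lambda\veg_e$ with $\supp(B\vey)\subsetneq\supp(B\veg)$. By Lemma \ref{lem:singleedgedrop}, the edge $e$ is then not contained in any balanced set with respect to $\veg$. Equivalently, every $U\subseteq V$ with $e\in E(G[U])$ is imbalanced with respect to $\veg$, and since these sets are exactly the ones indexing $\supp(B\veg_e)$, we obtain $\supp(B\veg_e)\subseteq\supp(B\veg)$. To get strict containment, I will use that $\veg$ is mixed-sign, so there is some edge $f\in\supp(\veg)$ with $f\neq e$; taking $U$ to be the endpoints of $f$ gives $E(G[U])=\{f\}$ (since $G$ is simple), so the row indexed by $U$ is in $\supp(B\veg)\setminus\supp(B\veg_e)$.

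For the reverse direction, assume $\supp(B\veg_e)\subsetneq\supp(B\veg)$. Then every $U$ containing $e$ corresponds to a row in $\supp(B\veg_e)\subseteq\supp(B\veg)$, so every such $U$ is imbalanced with respect to $\veg$; in other words, $e$ lies in no balanced set with respect to $\veg$. Lemma \ref{lem:singleedgedrop} (the converse half) then guarantees that dropping $e$ from $\supp(\veg)$ reduces the support, which is to say $\vey=\veg-\lambda\veg_e$ (with $\lambda=\veg(e)$) satisfies $\supp(B\vey)\subsetneq\supp(B\veg)$, as required.

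The only subtlety is the strict inclusion in the forward direction, which is the only place the mixed-sign hypothesis is used; everything else is a direct translation between the two formulations of the no-balanced-set condition via Lemma \ref{lem:singleedgedrop}. There is no substantial obstacle beyond bookkeeping, since the preceding paragraph in the text already sketches the forward implication; the corollary is essentially a clean restatement of Lemmas \ref{lem:singleedgedrop} and \ref{lem:singleedge} in the language of supports under $B$.
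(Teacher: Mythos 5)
Your proof is correct and takes essentially the same route as the paper, which also derives the corollary directly from both halves of Lemma \ref{lem:singleedgedrop} (translating ``$e$ lies in no balanced set'' into $\supp(B\veg_e)\subseteq\supp(B\veg)$) and gets strictness from the fact that $\supp(\veg_e)\subsetneq\supp(\veg)$. Your explicit use of the two-endpoint set of a second support edge $f$ just spells out the strictness step the paper leaves implicit.
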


Recall that unit vectors are circuits; c.f., Lemma \ref{lem:unitvector}. Thus $\veg_e$ in the above statement is a circuit. Combining observations so far, we can characterize those situations in which there exists a uniform-sign vector $\vez$ that can be dropped from a vector $\veg$ to obtain a vector with reduced support as being exactly the same as those situations in which there exists a unit-vector circuit $\veg_e$ with inclusion-wise smaller support. In other words, consideration of unit-vector circuits suffices to disprove that such vectors are circuits. 

For the remainder of the discussion, we consider dropping a set $S$ that contains both $+$- and $-$-edges (which we refer to as a \textit{mixed-sign set}), or dropping a mixed-sign vector $\vez$ from a mixed-sign vector $\veg$ to arrive at a vector $\vey$. If $\vey$ is a uniform-sign vector, then by Lemma \ref{lem:unitvector} it is a circuit if and only if it is a unit vector. In turn, Corollary \ref{cor:unitvectorcontainment} tells us that it suffices to drop a (scaled) unit vector $\vez'$ from $\veg$ to reduce the support. Thus, we may restrict to the case where $\vey$ is a mixed-sign vector as well.

We are interested in understanding the structure of $0/1$ vectors in this remaining setting. Recall the discussion after Lemma \ref{lem:singleedgedrop}, wherein we observed a surprisingly restrictive property for $0/1$ vectors to not be circuits: circuits cannot have an edge $e$ that only lies in imbalanced sets; otherwise, $e$ could be dropped to reduce the support. 
Our goal is to generalize this property: instead of a drop of a single edge $e$, we consider the drop of a set of edges.

More precisely, let $\veg$ be a mixed-sign $0/1$ vector and let $S\subseteq E$ such that $\veg$ has $\pm$-edges in both $S$ and $E\backslash S$. We consider a drop of $S$ to reduce the support. We show that this is only possible if $\supp(\veg)$ corresponds to the union of two low-diameter graphs. 

\begin{theorem}\label{thm:diameter}
Let $\veg$ be a mixed-sign $0/1$ vector and let $S\subseteq E$ be such that $\veg$ has $\pm$-edges in both $S$ and $E\backslash S$. Let $\vey$ be obtained from $\veg$ by dropping the edge set $S$, and suppose that $\vey$ is mixed-sign and $\supp(B\vey)\subsetneq \supp(B\veg)$. Then $\supp(\veg)$ is the disjoint union of two edge sets which each induce a connected graph of diameter at most $5$.
\end{theorem}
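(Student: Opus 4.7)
The plan is to exploit how the reduced-support witness $\vey$ must be compatible with balanced subsets of $\veg$. The proof proceeds in three main stages: normalizing $\vey$, deducing that $\supp(\vey)$ and its complement in $\supp(\veg)$ each form a connected subgraph, and finally bounding their diameters.

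First, I would apply Lemma \ref{lem:uniquewithinsupport2} to replace $\vey$ with a circuit satisfying $\supp(\vey) \subsetneq \supp(\veg)$, and choose it to have inclusion-minimal support among vectors with $\supp(B\vey) \subsetneq \supp(B\veg)$. Since dropping $S$ forces $\vey(e) = 0$ for every $e \in S$, we get $\supp(\vey) \subseteq \supp(\veg) \setminus S$. The natural candidate partition is then $A := \supp(\vey)$ and $B := \supp(\veg) \setminus \supp(\vey) \supseteq S$. The hypotheses guarantee that $B$ is nonempty and contains both $+$- and $-$-edges of $\veg$, and that $A$ is nonempty.

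Next, I would establish a balanced-pair compatibility property: if $(e,f)$ is a balanced pair with respect to $\veg$, witnessed by a set $U \subseteq V$ with $E(G[U]) \cap \supp(\veg) = \{e,f\}$, then the inclusion $\supp(B\vey) \subseteq \supp(B\veg)$ forces $\vey(e) + \vey(f) = 0$. Consequently $e \in A$ iff $f \in A$; every balanced pair lies entirely in $A$ or entirely in $B$. To deduce connectedness of $G[A]$, I would suppose for contradiction that $G[A]$ has vertex-disjoint components $A_1,\ldots,A_p$ with $p \geq 2$ and decompose $\vey = \vey_1 + \cdots + \vey_p$ accordingly. For any balanced $U$ with respect to $\veg$, the identity $0 = \sum_{e \in E(G[U])} \vey(e) = \sum_i \sum_{e \in E(G[U]) \cap A_i} \vey_i(e)$ holds. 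If some $\vey_i$ contributes zero on every balanced $U$, then $\vey_i$ contradicts the minimality of $\vey$. Otherwise, a combinatorial argument leveraging balanced pairs shows that the non-canceling contributions chain the supposedly disjoint components through additional balanced sets, merging them into one connected piece. The same argument, applied symmetrically, shows that $G[B]$ is connected.

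Finally, for the diameter bound, I would argue by contradiction that $G[A]$ (and symmetrically $G[B]$) has diameter at most $5$. Suppose $G[A]$ contains a shortest path of length at least $6$. I would restrict $\vey$ to a proper ``initial'' subgraph of $G[A]$ (e.g., everything within distance $3$ of one endpoint) to produce a vector $\vey'$ with $\supp(\vey') \subsetneq \supp(\vey)$. Using balanced sets $U$ of small size, no $U$ can simultaneously touch both far endpoints of the path in a single imbalanced configuration; every balanced $U$ with respect to $\veg$ should then continue to be balanced with respect to $\vey'$, giving $\supp(B\vey') \subsetneq \supp(B\veg)$ and contradicting the minimal choice of $\vey$.

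The main obstacle is the precise diameter bound of exactly $5$. The threshold arises from the sizes of balanced sets that can ``transmit'' information about $\vey$ across the graph, in particular the balanced triples and quadruples with $|E(G[U]) \cap \supp(\veg)| = 2$ that effectively encode balanced pairs at graph-distance up to a small constant. Making the restriction argument rigorous and confirming that the threshold is $5$ rather than some larger constant requires classifying exactly those small balanced sets of $\veg$ whose absence past distance $5$ allows the decoupling of $\vey$.
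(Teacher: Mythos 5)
Your choice of partition is the wrong one, and this is not a cosmetic issue. Taking $A=\supp(\vey)$ and $B=\supp(\veg)\setminus\supp(\vey)$ amounts (for the original $\vey$, which vanishes exactly on $S$) to splitting $\supp(\veg)$ into its $E\setminus S$-part and its $S$-part. But the only pairs of edges that the hypothesis forces to be close are \emph{opposite-sign cross pairs}: an edge of $S$ and an edge of $E\setminus S$ with opposite $\veg$-signs cannot form a balanced pair, since dropping $S$ would turn that balanced set into an imbalanced one, contradicting $\supp(B\vey)\subsetneq\supp(B\veg)$. Nothing in the hypotheses controls the distance between a $+$-edge and a $-$-edge lying on the \emph{same} side of your split, yet both parts of your partition contain such pairs by assumption. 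This is why the paper's partition deliberately mixes sign and side: $R_1$ consists of the $-$-edges of $S$ together with the $+$-edges of $E\setminus S$, and $R_2$ is the complement, so that every opposite-sign pair inside a part is automatically a cross pair (distance at most $3$ via the third support edge that must live on their at most four endpoints), and same-sign pairs inside a part are routed through one such cross edge (giving $5$). Your claimed intermediate step already fails on the paper's own example in Figure \ref{fig:mixedsignexample}: there $B=S=\{12,45,56,17\}$, and the graph spanned by these edges (or by their vertex set, which omits the hub vertex $3$) splits into the two pieces $\{1,2,7\}$ and $\{4,5,6\}$, so ``$G[B]$ is connected'' is false.

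Two further problems compound this. First, invoking Lemma \ref{lem:uniquewithinsupport2} to replace $\vey$ by a minimal-support circuit destroys exactly the property you then use: the replacement circuit has $\supp(B\cdot)\subsetneq\supp(B\veg)$ and smaller support, but need not vanish on $S$, so the containment $B\supseteq S$ and the sign claims about $B$ no longer follow; the paper's proof needs no such normalization (and no minimality or contradiction-by-decomposition at all). Second, the connectivity step (``the non-canceling contributions chain the components'') and the diameter step (restricting $\vey$ to a distance-$3$ ball and asserting all balanced sets stay balanced) are both left as sketches, and you yourself flag that the threshold $5$ is not established. In the paper the entire argument is the short, direct computation described above; the harder connectivity statement about $\supp(\veg)$ itself is deferred to Theorem \ref{thm:notacircuit} and proved there by a separate triangle/five-vertex balanced-set argument, not by the kind of restriction argument you propose.
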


Before we prove this theorem, we illustrate an example in Figure \ref{fig:mixedsignexample}. The graph consists of two partitions, $S = \{(1,2), (4,5), (5,6), (1,7)\}$ and $E\setminus S = \{(2,3), (3,4), (3,6), (3,7)\}$, that have $0/1$ edges of mixed signs. 
Observe that all edges lie in some balanced set due to having a neighbor of the opposite sign and that the structure is triangle-free. The edges which correspond to $S$ can be dropped to reduce the support (and, in fact, obtain a $0/1$ circuit). Dropping all edges \textit{except} these 4 edges also gives a $0/1$ circuit.

  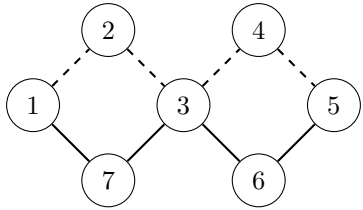
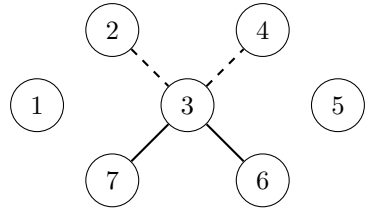
\begin{figure}[ht]
\centering        
\begin{subfigure}{0.45\textwidth}
\centering
\begin{tikzpicture}
\tikzset{vertex/.style = {shape=circle,draw,minimum size=2em}}
            \node[vertex] (1) at (-2,0) {1};
            \node[vertex] (3) at (0,0) {3};
            \node[vertex] (4) at (1,1) {4};
            \node[vertex] (6) at (1,-1) {6};
            \node[vertex] (7) at (-1,-1) {7};
            \node[vertex] (2) at (-1,1) {2};
             \node[vertex] (5) at (2,0) {5};

            \draw[thick,dashed] (1)--(2) {};
            \draw[thick,dashed] (2)--(3) {};
            \draw[thick,dashed] (3)--(4) {};
            \draw[thick,dashed] (4)--(5) {};
            \draw[thick] (5)--(6) {};
            \draw[thick] (6)--(3) {};
            \draw[thick] (3)--(7) {};
            \draw[thick] (7)--(1) {};
\end{tikzpicture}
 \caption{A graph corresponding to a mixed-sign $0/1$ vector.}
     \end{subfigure}
\hfill 
\begin{subfigure}{0.45\textwidth}
\centering
\begin{tikzpicture}
\tikzset{vertex/.style = {shape=circle,draw,minimum size=2em}}
            \node[vertex] (1) at (-2,0) {1};
            \node[vertex] (3) at (0,0) {3};
            \node[vertex] (4) at (1,1) {4};
            \node[vertex] (6) at (1,-1) {6};
            \node[vertex] (7) at (-1,-1) {7};
            \node[vertex] (2) at (-1,1) {2};
             \node[vertex] (5) at (2,0) {5};

            \draw[thick,dashed] (2)--(3) {};
            \draw[thick,dashed] (3)--(4) {};
            \draw[thick] (6)--(3) {};
            \draw[thick] (3)--(7) {};

\end{tikzpicture}
 \caption{A graph corresponding to a mixed-sign $0/1$ vector of reduced support.}
     \end{subfigure}

\caption{An example for the drop of a mixed-sign edge set from a mixed-sign $0/1$ vector to reduce the support.  Edges of the same line pattern are the same sign.}
\label{fig:mixedsignexample}
\end{figure}

\begin{proof}[Proof of Theorem \ref{thm:diameter}]
Let $\veg$, $S$, and $\vey$ be defined as in the statement. In this proof, for the sake of simplicity we will refer to distance between objects in $G[\supp(\veg)]$ as simply the distance.
Note that since $\supp(B\vey)\subsetneq \supp(B\veg)$, there does not exist any set $U$ which is balanced with respect to $\veg$ but imbalanced with respect to $\vey$. 
Consider any pair of edges $e=uv$ in $S$ and $f=rs$ in $E\setminus S$ such that $\veg(e)$ and $\veg(f)$ are nonzero and of opposite sign. Let $U=\{u,v\}\cup\{r,s\}$.  Note that it is not necessarily the case that $\{u,v\}\cap\{r,s\}=\emptyset$.
If $E[U]=\{e,f\}$, then $U$ is a balanced pair with respect to $\veg$ but not $\vey$, a contradiction. 
Thus there exists at least one additional edge $h\in E[U]$. If $e$ and $f$ are incident, $U$ induces a triangle; if they are not incident, then they are connected via at least one edge.
Thus, for any such pair of edges $e$ and $f$, their vertices are at distance at most $3$ from each other in $G$.

Now let $e=uv$ and $f=rs$ be such that either both are in $S$ or both are in $E\setminus S$, and such that $\veg(e)$ and $\veg(f)$ are nonzero and of the same sign.  Without loss of generality, suppose they are both in $S$ and both positive.
Since $E\setminus S$ contains both $+$-edges and $-$-edges, there is a $-$-edge $h=tw$ in $E\setminus S$, and per the above argument, the largest possible distance between any of $u,v,r,$ or $s$ and either of $t$ or $w$ is 3. It follows that the largest possible distance between any pair of $u,v,r,$ or $s$ is 5.

To conclude the proof, let $R_1$ be the set of $-$-edges of $S$ and $+$-edges of $E\setminus S$, and let $R_2$ be the set of $+$-edges of $S$ and $-$edges of $E\setminus S$.  Then $\supp(\veg)=R_1\cup R_2$, and as above, each of $G[R_1]$ and $G[R_2]$ has graph diameter at most 5, as desired.
\end{proof}

To summarize, Lemma \ref{lem:singleedge} shows that if a 0/1 vector $\veg$ can be certified as a non-circuit by dropping a uniform-sign vector, then it can be certified as a non-circuit by dropping an edge.  Theorem \ref{thm:diameter} covers the case where the certification that $\veg$ is not a circuit requires dropping a mixed-sign vector. It concludes that, in such a case, $\supp(\veg)$ is the disjoint union of two edges sets, both inducing connected graphs of low diameter. In combination, our statements exhaustively describe that only two situations can arise when a $0/1$ vector $\veg$ is not a circuit.  One is a very simple case, and the other is a highly restricted case. Next, we strengthen Theorem \ref{thm:diameter} by showing that, in the situation it covers, we have the stronger property that $\supp(\veg)$ itself induces a connected graph.

\begin{theorem}\label{thm:notacircuit}
Let $\veg \neq 0$ be a $0/1$ vector which is not a circuit of (\ref{Rank}).
Then either
\begin{enumerate}
    \item there exists $e\in\supp(\veg)$ such that $\supp(B\veg_e)\subsetneq\supp(B\veg)$, or
    \item $\supp(\veg)$ induces a connected graph, and is the disjoint union of two edge sets which each induce a connected graph of diameter at most $5$.
\end{enumerate}
\end{theorem}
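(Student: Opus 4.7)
The plan is to dispatch the uniform-sign case by hand, reduce the mixed-sign case to Theorem~\ref{thm:diameter} for the disjoint-union/diameter structure, and then argue the extra connectedness claim by a scalar-multiple argument that forces disconnected supports to yield circuits.

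If $\veg$ is uniform-sign, then since $\veg$ is a non-circuit Lemma~\ref{lem:unitvector} forces $|\supp(\veg)|\geq 2$, and any $e\in\supp(\veg)$ gives $\supp(B\veg_e)=\{U:e\in E(G[U])\}\subsetneq\{U:E(G[U])\cap\supp(\veg)\neq\emptyset\}=\supp(B\veg)$, placing us in case (1). If $\veg$ is mixed-sign, I would apply Lemma~\ref{lem:uniquewithinsupport2} to obtain a circuit $\vey$ with $\supp(\vey)\subsetneq\supp(\veg)$ and $\supp(B\vey)\subsetneq\supp(B\veg)$. If $|\supp(\vey)|=1$ then $\vey=\pm\veg_e$ for its unique edge and case (1) follows; otherwise Lemma~\ref{lem:unitvector} forces $\vey$ to be mixed-sign. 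With $S:=\supp(\veg)\setminus\supp(\vey)$, I aim to invoke Theorem~\ref{thm:diameter}. If $\veg$ has $\pm$-edges in both $S$ and $E\setminus S$, the theorem directly delivers $\supp(\veg)=R_1\sqcup R_2$ with each $R_i$ a connected edge set of diameter at most $5$. If the hypothesis fails --- say $\supp(\vey)$ contains only $+$-edges of $\veg$ --- then the balanced sets of $\veg$ containing a single $-$-edge impose the equations $(B\vey)_U=0$ on the $\vey$-values of $+$-edges in $E(G[U])$; combining these equations with the mixed-sign structure of the nonzero circuit $\vey$ forces the existence of a $-$-edge $f$ lying in no balanced set of $\veg$, so $\supp(B\veg_f)\subsetneq\supp(B\veg)$ and case (1) holds. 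The symmetric case where $S$ is uniform-sign with respect to $\veg$ is handled analogously.

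For the remaining connectedness claim, suppose to the contrary that $\supp(\veg)=F_1\sqcup F_2$ with $V[F_1]\cap V[F_2]=\emptyset$ and both $F_i$ nonempty, and let $\veg_i:=\veg|_{F_i}$. Mixed-signedness of $\veg$ rules out that both $\veg_i$ are same-sign uniform, and across the remaining sign configurations Lemma~\ref{lem:discon_circs} implies that if both $\veg_i$ are circuits then so is $\veg$, contradicting the non-circuit hypothesis. Otherwise, given any candidate certificate $\vez\neq\veo$ with $\supp(\vez)\subseteq\supp(\veg)$ and $\supp(B\vez)\subsetneq\supp(B\veg)$, for every cross-component pair $e_0\in F_1,\,f\in F_2$ with $\veg(e_0)=-\veg(f)$ the set $U=V(e_0)\cup V(f)$ satisfies $E(G[U])\cap\supp(\veg)=\{e_0,f\}$ by vertex-disjointness and simplicity of $G$, and is therefore balanced with respect to $\veg$, forcing $\vez(e_0)+\vez(f)=0$. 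Chaining such cross-component relations with analogous within-component balanced-set equations (when $\veg_i$ is itself mixed-sign, using small balanced pairs inside $F_i$) pins the ratio $\vez(e)/\veg(e)$ to a common constant across $\supp(\veg)$, so $\vez$ is a scalar multiple of $\veg$, contradicting $\supp(B\vez)\subsetneq\supp(B\veg)$. Hence $\supp(\veg)$ is connected. The main obstacle is the sign-based case analysis in the fallback branch and the combinatorial chaining of balanced-pair equations in the connectedness step --- both of which rely on locating enough balanced sets whose intersection with $\supp(\veg)$ is tightly controlled.
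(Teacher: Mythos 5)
Your skeleton matches the paper's (uniform-sign case, Lemma \ref{lem:uniquewithinsupport2} to get a certificate circuit $\vey$ with smaller support, Theorem \ref{thm:diameter} for the two-part diameter-$5$ structure), but two steps are genuinely broken. First, the fallback branch is false as stated. You claim that if the certificate $\vey$ is supported only on $+$-edges of $\veg$, then some $-$-edge of $\veg$ lies in no balanced set, so case (1) holds. Counterexample: let $G=K_4$ on $\{1,2,3,4\}$ and let $\veg$ be $+1$ on $\{12,13,34\}$ and $-1$ on $\{14,23,24\}$. The only vertex set inducing a nonempty balanced subgraph is $U=\{1,2,3,4\}$, which contains \emph{every} edge, so case (1) fails; yet the difference of the two unit vectors for $12$ and $34$ is a circuit $\vey$ with $\supp(\vey)\subsetneq\supp(\veg)$ and $\supp(B\vey)\subsetneq\supp(B\veg)$, supported only on $+$-edges of $\veg$. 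Since Lemma \ref{lem:uniquewithinsupport2} only asserts existence of \emph{some} certificate, your proof must survive this choice, and in this instance your branch concludes case (1) while only case (2) holds. (Note the paper splits differently here: it cases on whether the vectors $\vey$ or $\vez=\veg-\vey$ are uniform-sign, handled via Lemma \ref{lem:singleedge} and Corollary \ref{cor:unitvectorcontainment}; your condition, uniform $\veg$-signs on $\supp(\vey)$ or on $S$, is not equivalent, and the example shows it cannot simply be routed to case (1).)

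Second, the connectedness step asserts precisely the hard part without proof. The claim that chaining cross-component balanced pairs with ``small balanced pairs inside $F_i$'' pins $\vez(e)/\veg(e)$ to a common constant is unjustified: the relevant balanced sets need not be pairs at all (in the $K_4$ example above the only balanced set has four edges), edges lying in no balanced set break the chain completely, and disconnected mixed-sign non-circuits do exist (e.g., a path with signs $+,-,-$ together with a vertex-disjoint $-$-edge), so your unconditional conclusion ``hence $\supp(\veg)$ is connected'' is false in general --- the theorem only forces connectedness when case (1) fails, and the paper's own wording is ``connected unless case 1 applies.'' The paper proves this step by a delicate structural argument tied to the drop set $S$: assuming two components $C_1,C_2$, it shows each of $C_1\cap S$, $C_1\cap(E\setminus S)$, $C_2\cap S$, $C_2\cap(E\setminus S)$ is nonempty and uniform-sign, deduces forced triangles from incident $\pm$ pairs, and constructs a five-vertex set that is balanced for $\veg$ but imbalanced for $\vey$. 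Your proposal replaces this with a scalar-multiple heuristic that does not go through; both this step and the fallback branch need to be reworked along the paper's lines (or with a genuinely different argument that accounts for case (1) as an outcome).
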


\begin{proof}
In this proof, for the sake of simplicity we will refer to distance between objects in $G[\supp(\veg)]$ as simply the distance.  Let $\veg$ be a $0/1$ non-circuit and let $\vey$ be a vector of reduced support arising from a drop of an edge set $S$ or vector $\vez$ (i.e., $\vey = \veg - \vez$) and satisfying $\supp(B\vey)\subsetneq \supp(B\veg)$.
Case $1$ arises in three situations. First, it arises when $\veg$ is a uniform-sign vector but not a unit vector, as in Lemma \ref{lem:unitvector}. Second, it covers the situation where $\vey$ is a uniform-sign vector, again by Lemma \ref{lem:unitvector}. Third, it captures situations in which the difference between $\veg$ and $\vey$ is a uniform-sign vector, by Lemma \ref{lem:singleedge} and Corollary \ref{cor:unitvectorcontainment}. Note that together, these cover all situations in which at least one of $\veg$, $\vez$, or $\vey$ is uniform-sign.

We still have to show that case $2$ captures the remaining situations in which all of $\veg$, $\vez$, and $\vey$ are mixed-sign. Thus, let $\veg$ be a mixed-sign vector and let mixed-sign edge set $S$, or corresponding mixed-sign vector $\vez$, be dropped to obtain a mixed-sign vector $\vey=\veg-\vez$ of reduced support. Note that we satisfy the prerequisites of Theorem \ref{thm:diameter}. Thus, $\supp(\veg)$ is known to be the disjoint union of two edge sets which each induce a connected graph of diameter at most $5$. It remains to prove that $H = G[\supp(\veg)]$ is connected.

 For the purpose of contradiction, assume $H$ has several components. As $\supp(\veg)$ is the union of two edge sets, each inducing connected graphs, $H$ has two components $C_1,C_2$. In the following, we consider the relation between the set $S\subseteq \supp(\veg)$ and the components $C_1,C_2$. 

First, assume that one of the components is uniform-sign with respect to $\veg$. Without loss of generality, let $C_2$ have only $-$-edges. Note that this implies that $C_1$ has at least one $+$-edge.  
The whole component $C_2$ is then contained in either $S$ or $E\backslash S$ along with all $+$-edges from $C_1$; otherwise a balanced pair of a $+$-edge from $C_1$ and a $-$-edge from $C_2$ with respect to $\veg$ is not one for $\vey$. 
However, this implies that the other set from $S$ or $E\backslash S$ is uniform-sign with respect to $\veg$, as it can only contain the $-$-edges from $C_1$; 
a contradiction.

Thus, both components $C_1,C_2$ are mixed-sign with respect to $\veg$. Next, observe that if either of $S$ or $E\setminus S$ is contained entirely in one of $C_1$ or $C_2$, some balanced pair with respect to $\veg$ is not one with respect to $\vey$: Without loss of generality, let $S \subseteq C_1$. Then $C_2\subseteq E\backslash S$, and any balanced pair $(e,f)$ with edge $e\in S$ and $f\in C_2$ can be used to validate the claim. This shows that both components $C_1$ and $C_2$ contain edges from both of $S$ and $E\setminus S$.
Let $C^S_1=C_1\cap S, C^{E\backslash S}_1=C_1\cap (E\backslash S)$,  $C^S_2=C_2\cap S,$ and $ C^{E\backslash S}_2=C_2\cap (E\backslash S)$. By the above, all of these sets are non-empty.

Next we argue that each of these four edge sets must be uniform-sign. Without loss of generality, assume $C^S_1 \subseteq S$ is mixed-sign. As $C_2$ is mixed-sign and disconnected from $C_1$, there exist balanced pairs 
$(e,f)$ with edge $e\in C^S_1$ and $f\in C^{E\backslash S}_2$ with respect to $\veg$ that are not balanced pairs with respect to $\vey$. 
For the same reason, it also holds that the edges of $C^S_1$ and $C^{E\backslash S}_2$ share the same sign (w.l.o.g. $+$-edges), and the edges of $C^S_2$ and $C^{E\backslash S}_1$ share the same, other sign (w.l.o.g. $-$-edges). 

Recall that each component $C_1$ and $C_2$ is mixed-sign and thus has pairs of incident $\pm$ edges. These pairs $(e,f)$ of incident edges are split between $+$-edges in $C^S_1$ and $-$-edges in $C^{E\backslash S}_1$ for $C_1$, and $-$-edges in $C^S_2$ and $+$-edges in $C^{E\backslash S}_2$ for $C_2$. The set $U$ of three vertices of such a pair $(e,f)$ induce a triangle in $H$; otherwise $(e,f)$ is a balanced pair with respect to $\veg$ but not $\vey$, a contradiction. Note that the third edge of the triangle lies in the same of the four parts as the other edge in that triangle which shares its sign.
 
 Consider a pair $E[U_1], E[U_2]$ of such mixed-sign triangles induced by vertex sets $U_1$ and $U_2$, one from each component $C_1$ and $C_2$, respectively. It is readily verified that one can always select $5$ vertices $U$ from the $6$ vertices $U_1 \cup U_2$, such that one has a balanced set with respect to $\veg$: $U$ is constructed to induce two $+$- and two $-$edges; the set $U$ can be built by choosing the vertices $U_i$ of one of the triangles and adding the two vertices of an edge $f_U$ of the ``missing'' sign from the other.

We claim that the balanced set $U$ or one of its balanced subsets with respect to $\veg$ is imbalanced with respect to $\vey$. Without loss of generality, let $U_i=U_1$. Note that only one or three of the four edges in $E(U)$ lie in $E\backslash S$:  If $G[U_1]$ has two $+$-edges in $C_1^S$ and one $-$-edge in $C_1^{E\backslash S}$, then the additional $-$-edge $f_U$ lies in $C_2^S$; 
if $G[U_1]$ has one $+$-edge in $C_1^S$ and two $-$-edges in $C_1^{E\backslash S}$, then the additional $+$-edge $f_U$ lies in $C_2^{E\backslash S}$. 
If only one edge from $E(U)$ is in $E\backslash S$, then clearly $U$ is imbalanced with respect to $\vey$. If three edges lie in $E\backslash S$, then either $U$ or some balanced pair $(e,f)$ contained in $E[U]$ with $e \in U_1$ and $f \in U_2$ is imbalanced with respect to $\vey$.
This shows our final contradiction, proving that $\supp(\veg)$ must be connected unless case $1$ applies.
\end{proof}

Note that the two cases are not exclusive from each other. For example, a uniform-sign vector with two nonzero entries would satisfy both cases 1 and 2. Theorem \ref{thm:notacircuit} tells us that either a unit vector can be used to disprove that a non-circuit $0/1$ vector is, in fact, not a circuit---a situation that is quite rare, because it would require the existence of an edge that is not in \textit{any} balanced set (recall the discussion after Lemma \ref{lem:singleedgedrop})---or the support of the non-circuit $0/1$ vector must be connected, of very low diameter, and decompose into two connected components with a special structure and which are themselves of very low diameter. In particular, this implies that 0/1 vectors with a support of diameter greater than $10$ (and that do not have an edge that is not in a balanced set) must be circuits.

\subsection{Constant Circuit Diameters}\label{sec:rankddiameters}

We prove a constant upper and lower bound on the circuit diameter of the polyhedron given by (\ref{MWF}). The arguments build on the insight gained in Sections \ref{sec:simplerank} and \ref{sec:mixedsign} on the relation of circuits, non-circuits, alternating structures, and connectedness. We begin with a general upper bound of $9$ on the circuit diameter. Later, we will reduce this bound for special cases.

\begin{theorem}\label{thm:upperbound9}
Let $G=(V,E)$ be a graph and let $P$ be the polyhedron defined by (\ref{MWF}). Then $P$ has circuit diameter at most $9$.
\end{theorem}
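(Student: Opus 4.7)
The plan is to construct, for any two extreme points $\ve{X}(F_1)$ and $\ve{X}(F_2)$ of $P$ (corresponding to forests $F_1, F_2$ of $G$), a circuit walk of length at most $9$ between them. Let $\veg = \ve{X}(F_2) - \ve{X}(F_1)$ and $H = G[\supp(\veg)]$; the induced subgraph $H$ encodes the symmetric difference $F_1 \triangle F_2$, which is exactly what the walk must unwind via a bounded number of circuit moves.

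The central tool will be Lemma \ref{lem:discon_circs}: vertex-disjoint 0/1 circuits can be merged into a single 0/1 circuit. This is what makes a constant bound possible, since it lets us dispose of many components of $H$ in parallel during a single step. Combined with the structural characterizations in Lemmas \ref{lem:unitvector}, \ref{lem:alts}, \ref{lem:root_cyc}, and \ref{lem:pseudo}, we have a rich catalogue of 0/1 circuits---unit-vector additions and removals, alternating paths and even cycles, rooted alternating cycles, and pseudo-alternating variants---at our disposal.

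I would classify the connected components of $H$ by their type (uniform-sign, alternating, pseudo-alternating, and residual ``complex'' components) and show that each type can be resolved in a bounded number of circuit steps by batching vertex-disjoint components of the same type via Lemma \ref{lem:discon_circs}. For residual components that are not directly covered by Lemmas \ref{lem:alts}--\ref{lem:pseudo}, Theorem \ref{thm:notacircuit} provides the essential structural restriction: any 0/1 non-circuit either admits a single-edge simplification or has a connected support that decomposes into two edge sets each inducing a graph of diameter at most $5$. This strong restriction should let the residual cases also be resolved in a few additional steps. Summing a constant number of steps per stage, one arrives at the overall bound of $9$.

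The main obstacle will be handling the residual components while maintaining feasibility throughout the walk. Since circuit walks take maximal-length steps, intermediate points need not be integer, and the order of operations must be chosen carefully: removals of edges that would conflict with later additions must be executed first, and components sharing vertices cannot be batched via Lemma \ref{lem:discon_circs} and must be processed sequentially. I expect the most delicate part will be the bookkeeping---matching contributions from each stage so that they sum to exactly $9$---guided by the structural dichotomy of Theorem \ref{thm:notacircuit}.
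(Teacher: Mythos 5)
There is a genuine gap: your proposal is a plan, not a proof, and the hardest part is precisely what is left unaddressed. The connected components of $G[\supp(\veg)]$ for $\veg=\ve{X}(F_2)-\ve{X}(F_1)$ are in general \emph{not} alternating paths, even cycles, rooted cycles, or pseudo-alternating structures --- a component of the union of two forests can be an essentially arbitrary connected mixed-sign graph, and it need not be a circuit at all. For such a ``residual'' component you appeal to Theorem \ref{thm:notacircuit}, but that theorem only constrains the structure of 0/1 \emph{non-circuits}; it gives no procedure for realizing the swap across such a component as a bounded number of feasible, maximal circuit steps, and nothing in Lemmas \ref{lem:alts}--\ref{lem:pseudo} does either. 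Resolving one arbitrary component in $O(1)$ steps is essentially the whole difficulty of the theorem, and it is missing. Relatedly, your batching step has a sign obstruction you do not address: components of $G[\supp(\veg)]$ are automatically vertex-disjoint (so your worry about components sharing vertices is moot), but Lemma \ref{lem:discon_circs} cannot merge several uniform-sign components of the same sign (such a sum is a non-circuit by Lemma \ref{lem:unitvector}), so ``add-only'' components cannot simply be dispatched in parallel. Finally, there is no accounting that produces the specific constant $9$; ``a constant number of steps per stage'' could just as well sum to $15$.

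For contrast, the paper does not decompose the symmetric difference at all. It routes every walk through the origin: starting from $\veo$, it reaches any $\ve{X}(F)$ in at most $5$ explicit steps by picking a leaf edge $e=uv$ and alternating $+\veg_e$ with the two circuits $-\veg_e+\ve{X}(F_v)$ and $-\veg_e+\ve{X}(F\setminus F_v)$ (these are circuits by the balanced-pair argument and Lemma \ref{lem:discon_circs}, and all intermediate points are extreme points, so the walk is reversible). Concatenating the reversed walk $\ve{X}(F_1)\to\veo$ with $\veo\to\ve{X}(F_2)$ gives $10$, and merging the two middle unit-vector steps into the single circuit $\veg_{e_2}-\veg_{e_1}$ gives $9$. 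If you want to salvage your route, you would need a concrete constant-step resolution of an arbitrary component (for instance, by emptying it through a leaf edge as the paper does) together with an argument that these per-component resolutions can be synchronized and merged stage-by-stage under the sign restrictions of Lemma \ref{lem:discon_circs}; as written, neither ingredient is present.
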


\begin{proof}
If $|V|\leq 4$, then $P$ is known to have circuit diameter at most $6$ \cite{bb-many-24,t-92}. Thus, we may assume $|V|\geq 5$. Further, note that all feasible $0/1$ vectors are extreme-points. Our proof strategy is as follows. We show that any extreme-point $\vex$ of $P$ is at most $5$ circuit steps (using $0/1$ circuits and step lengths $1$) from $\veo$, and vice versa.  
This gives an upper bound of $10$ on the circuit diameter of $P$. We then explain a refinement that improves the bound from $10$ to $9$.

Let $\vex=\ve{X}(F)$ be an extreme-point of $P$ where $F$ is a forest containing edge $e=uv$ and where $u$ is a leaf. Let $F_{v}:=\{vw \in F: w \neq u\}$, and recall that $\veg_{e}$ denotes the unit vector circuit corresponding to $e$. Starting from $\veo$, consider the following sequence of 5 circuit steps:
$$ (1)\; +\veg_{e} \quad\quad (2)\; -\veg_{e}+\ve{X}(F_{v}) \quad\quad (3)\; +\veg_{e} \quad\quad (4)\; -\veg_{e}+\ve{X}(F\backslash F_{v}) \quad\quad (5)\; +\veg_{e}.$$
 Steps 1, 3, and 5 are along the unit vector circuit corresponding to edge $e$. Steps 2 and 4 remove edge $e$ again, but add all edges incident or non-incident to $v$ from $F\backslash\{e\}$, respectively. If $F\backslash F_{v}$ or $F_{v}$ are empty, we reduce this sequence to steps 1,2,3 or steps 1,4,5, respectively.

Each intermediate solution has support contained in $F$, and thus this is a feasible walk. The vector $-\veg_{e}+\ve{X}(F_{v})$ in step $2$ is a circuit, as there is only one $-$-edge $e$ and all $+$-edges are incident to the same vertex $v$; each pairing of $e$ and any $+$-edge corresponds to a balanced pair. The vector $-\veg_{e}+\ve{X}(F\backslash F_{v})$ in step 4 is a circuit by point 1 of Lemma \ref{lem:discon_circs}.

Thus, we have verified that there is a circuit walk of at most 5 steps from $\veo$ to $\ve{X}(F)$. The construction is reversible, as all intermediate points are extreme-points; for the reversed walk from $\ve{X}(F)$ to $\veo$, one performs the reversed sequence of opposite steps (negated in sign). By the arbitrary choice of $F$, we obtain a bound of $10$ on the circuit diameter of $P$.

It remains to explain a refinement of this upper bound to $9$. Let $F_1 \neq F_2$ be two nonempty forests. A circuit walk of at most $10$ steps from $F_1$ to $F_2$ as explained above would perform the reversed walk from $\ve{X}(F_1)$ to $\veo$, and then perform the stated walk from $\veo$ to $\ve{X}(F_2)$. Let $e_1 \in F_1$ and $e_2 \in F_2$ be the leaf edges used in the construction.
Since the vector $\veg_{e_2}-\veg_{e_1}$ is a circuit, the last step of the walk from $\veX(F_1)$ to $\veo$ and the first step of the walk from $\veo$ to $\veX(F_2)$ can be replaced by a single step. This gives a total of $9$ steps to walk from $F_1$ to $F_2$, as desired.   
\end{proof}

Next, we show that if the underlying graph is complete, the strategy can be improved to obtain an upper bound of $7$. 

\begin{theorem}\label{thm:upperbound7}
Let $G=(V,E)$ be a complete graph and let $P$ be the polyhedron defined by (\ref{MWF}). Then $P$ has circuit diameter at most $7$.
\end{theorem}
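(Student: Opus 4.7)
My plan is to sharpen the bound of Theorem \ref{thm:upperbound9} from 9 to 7 in the complete graph setting, following the same structure but exploiting the richer circuit structure of $K_n$. Recall that the proof of Theorem \ref{thm:upperbound9} concatenates two 5-step walks (from $F_1$ to $\veo$ and from $\veo$ to $F_2$) for a total of 10 steps, and then merges the two adjacent unit-vector steps $\veg_{e_1} \to \veo \to \veg_{e_2}$ into the single edge-swap step $\veg_{e_1} \to \veg_{e_2}$, giving 9. In $K_n$, I will show that each of these two walks can be shortened to 4 steps, which combined in the same way yields $4 + 4 - 1 = 7$.

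To construct a 4-step walk from $\veo$ to $\ve{X}(F)$ in $K_n$, let $e = uv$ be a leaf edge of $F$ with $u$ a leaf. If $e$ is an isolated edge of $F$ (so $v$ is also a leaf), then $V(e)$ and $V(F \setminus \{e\})$ are disjoint, and Lemma \ref{lem:discon_circs} gives a 3-step walk $\veo \to \veg_e \to \ve{X}(F \setminus \{e\}) \to \ve{X}(F)$ whose middle circuit is $-\veg_e + \ve{X}(F \setminus \{e\})$. If instead $v$ has some neighbor $w \neq u$ in $F$ that is also a leaf of $F$, I use the 4-step walk
\[
\veo \to \veg_e \to \ve{X}(F_v) \to \ve{X}(F \setminus \{vw\}) \to \ve{X}(F)
\]
with $F_v = \{vw' \in F : w' \neq u\}$ as in Theorem \ref{thm:upperbound9}. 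Steps 1 and 4 are unit-vector steps; step 2 uses the star circuit $-\veg_e + \ve{X}(F_v)$ exactly as in Theorem \ref{thm:upperbound9}; and the new step 3 uses the vector $-\veg_{vw} + \veg_e + \ve{X}(F \setminus F_v \setminus \{e\})$. The latter is a circuit by Lemma \ref{lem:discon_circs}, being the sum of the edge-swap circuit $-\veg_{vw} + \veg_e$ with the uniform-sign vector $\ve{X}(F \setminus F_v \setminus \{e\})$; their vertex sets $\{u,v,w\}$ and $V(F \setminus F_v \setminus \{e\})$ are disjoint, since $u$ is a leaf, $v$ is excluded from $F \setminus F_v$ by definition, and $w$ is a leaf by the case assumption.

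When no leaf edge of $F$ has the favorable structure above, I start the walk with a different edge, relying on other circuits enabled by the complete graph. For example, when $F$ is a path $v_1 v_2 \cdots v_n$ with $n \geq 4$, I take $e = v_1 v_2$ and use in step 2 the circuit $-\veg_{v_1 v_2} + \ve{X}(\{v_3 v_4, v_4 v_5, \ldots, v_{n-1} v_n\})$ (a circuit by Lemma \ref{lem:discon_circs}, since the positive part is uniform-sign and its vertex set is disjoint from $\{v_1, v_2\}$), then add back $v_1 v_2$ and the one missing connecting edge $v_2 v_3$ with two unit-vector steps. Analogous constructions using alternating-path circuits (Lemma \ref{lem:alts}) and vertex-disjoint combinations of mixed-sign circuits cover the remaining forest structures, such as caterpillars and more branched trees.

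The main obstacle is to verify the 4-step bound uniformly across every forest, since the constructions above split into several cases depending on the degree sequence and local tree structure of $F$ around its leaves. Once this case analysis is complete, combining a 4-step walk from $F_1$ with the reverse of one from $F_2$ and applying the boundary-step merge exactly as in Theorem \ref{thm:upperbound9} yields a walk of at most $4 + 4 - 1 = 7$ circuit steps between any two extreme points of $(\ref{MWF})$ in $K_n$.
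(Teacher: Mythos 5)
Your high-level plan is sound and genuinely different from the paper's: the paper never routes through $\veo$; instead it shows every extreme-point of (\ref{MWF}) over a complete graph is at most $2$ circuit steps from an extreme-point whose support is a path of length $3$, and that two such ``path'' extreme-points are at most $3$ steps apart, giving $2+3+2=7$. Your route would instead shorten each half of the Theorem~\ref{thm:upperbound9} walk through $\veo$ to $4$ steps and merge the boundary steps, $4+4-1=7$, and the individual walks you exhibit (isolated leaf edge; leaf edge $uv$ where $v$ has a second leaf neighbor $w$; the long-path example) check out, including the circuit certifications via Lemma~\ref{lem:discon_circs} and the maximal-step-length and reversibility considerations.

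However, the proposal has a genuine gap, and you flag it yourself: the central claim that \emph{every} forest of $K_n$ is reachable from $\veo$ in at most $4$ maximal steps along $0/1$ circuits is only verified for the special shapes above; the sentence that ``analogous constructions \ldots cover the remaining forest structures'' is an assertion, not an argument, and as written nothing excludes a tree in which every leaf edge fails both of your favorable hypotheses. What is missing is a uniform case analysis, e.g.\ of the following form. If the leaf edge $e=uv$ has $|F_v|\le 1$, the walk $+\veg_e$, then $-\veg_e+\ve{X}(F\setminus F_v\setminus\{e\})$ (a circuit by part 1 of Lemma~\ref{lem:discon_circs}, since $u$ is a leaf and the positive edges avoid $v$), then at most two unit steps adding the single edge of $F_v$ and then $e$, needs at most $4$ steps; and one must then argue that every forest with an edge has a leaf edge with $|F_v|\le 1$ or with a second leaf neighbor of $v$ --- for instance by taking $u=w_0$ to be the endpoint of a longest path $w_0w_1\cdots w_k$ in a component of $F$: every neighbor of $w_1$ other than $w_0,w_2$ is a leaf (else the path could be extended), so either $\deg_F(w_1)=2$ or $w_1$ has a second leaf neighbor, which is exactly your case. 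Without some argument of this kind the bound of $4$ per half, and hence the claimed bound of $7$, is not established; with it, your proof would be complete and a legitimate alternative to the paper's.
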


\begin{proof}
As in the proof of Theorem \ref{thm:upperbound9}, recall that all feasible $0/1$ vectors of $P$ are extreme-points and that we may assume $|V|\geq 5$. Let $\vex$ and $\vey$ be two extreme points of $P$.  Our proof strategy is as follows: We show that  $\vex$  ($\vey$, respectively) is only at most $2$ circuit steps from an extreme-point $\vex'$ ($\vey'$, respectively) whose support is a path of length $3$. As unit vectors $\veg_e$ and differences of unit vectors $\veg_e-\veg_f$ are circuits, $\vex'$ and $\vey'$ are connected by a circuit walk of at most $3$ steps. This gives a circuit walk between $\vex$ and $\vey$ of at most $7$ steps. Note that if the support of $\vex$ ($\vey$, respectively) itself has at most $3$ edges, then part of the construction can be skipped, as such an extreme-point is at most $3$ circuit steps from $\vey'$ ($\vex'$, respectively). 

Let $\vex=\ve{X}(F)$ be an extreme-point of $P$ where $F$ is a forest with more than $3$ edges. Any nonempty forest $F$ has at least two leaves $u$ and $v$. Let $f$ and $h$ be the edges of $F$ containing $u$ and $v$, respectively, and let $e=uv$.  Note that $e\in E$ exists since $G$ is a complete graph. Note further that possibly $e\in F$; in this case, we have $e=f=h$. It suffices to exhibit that $\vex$ is at most 3 circuit steps (using $0/1$ circuits and step lengths $1$) away from an extreme-point $\vex'$ whose support is a path of length $3$, and vice versa. As in the proof of Theorem \ref{thm:upperbound9}, this sequence of steps only visits extreme-points of $P$ and thus is reversible. 

First, we resolve the case $e \notin F$. Let $\vex'' = \veX(\{e,f,h\})$ and note that $e \notin F$ implies $e \neq f,h$ and $f\neq h$. If $\vex''$ does not correspond to a triangle, then it corresponds to a path of length $3$ and we set $\vex'=\vex''$. This $\vex'$ is an extreme-point of $P$ and $\vex'-\vex$ is a circuit since it corresponds to an edge set of two uniform-sign components of opposite signs, by Lemma \ref{lem:discon_circs}. Thus, one can walk from $\vex'$ to $\vex$ in one step, and vice versa. 

If $\vex''$ instead does correspond to a triangle, then $\vex'' \notin P$ and we construct $\vex'= \veX(\{e,f',h\})$ from $\vex''$ by replacing the edge $f \in F$ incident to leaf $u$ by an edge $f' \notin F$ incident to $u$ and $f'\neq e$. Note that such an edge exists since $|V|\geq 5$. This $\vex'$ corresponds to a path of length $3$ and is an extreme-point of $P$. Further, $\tilde\vex' = \veX(\{e,h\})=\vex' -\veg_{f'}$ is an extreme-point, and one circuit step $-\veg_{f'}$ away from $\vex'$. Finally, $\tilde\vex' - \vex$ is a circuit since it corresponds to an edge set set of two uniform-sign components of opposite sign, again by Lemma \ref{lem:discon_circs}. Thus, one can walk from $\vex'$ to $\vex$ in at most $2$ steps, and vice versa.

Next, we resolve the case $e \in F$, which implies that $e=f=h$ and $e$ constitutes its own component of $F$. We construct $\tilde\vex = \veX(F\backslash \{e\} \cup \{f',h'\})$ from $\vex$ by removing $e$ and adding two other edges $f',h' \notin F$ incident to $u$ and $v$, respectively, and not incident to each other. Again, such edges exist since $|V|\geq 5$.
By construction, $\tilde\vex$ corresponds to a forest and thus is an extreme-point of $P$. Further, $\tilde\vex-\vex$ is a circuit by Lemma \ref{lem:alts}, as it corresponds to an alternating path of length $3$.  Let now $\vex'= \veX(\{e,f',h'\})$, and note that it corresponds to a path of length $3$ and is an extreme-point. Then $\vex'-\tilde\vex$ is a circuit by the same argument as above. Thus, in all cases, $\vex$ is at most 2 steps from some extreme-point $\vex'$ whose support is a path of 3 edges.
 By the discussion preceding this argument, this means that between any two extreme-points $\vex$ and $\vey$ there is a circuit walk of at most 7, as desired.        
\end{proof}

Finally, we devise a lower bound of $3$ on the circuit diameter.  

\begin{theorem}\label{thm:low_diam_bnd}
    Let $G = (V,E)$ be a graph on $|V| \geq 4$ vertices with a spanning forest $F\subseteq E$ of at least 3 edges, and let $P$ be the polyhedron defined by (\ref{MWF}). Then $P$ has circuit diameter at least $3$.
\end{theorem}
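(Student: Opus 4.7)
My plan is to exhibit a pair of extreme points of $P$ at circuit distance at least $3$. I will take $\veo$ (the empty forest) and $\ve{X}(F)$ (the spanning forest $F$ itself), and argue that no circuit walk of length at most $2$ connects them. A $1$-step walk is immediately ruled out: the difference $\ve{X}(F) - \veo = \ve{X}(F)$ is a uniform-sign $0/1$ vector with $|F| \geq 3$ nonzero entries, so by Lemma~\ref{lem:unitvector} it is not a circuit (nor is any positive multiple), and no single circuit step from $\veo$ reaches $\ve{X}(F)$.

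To rule out a $2$-step walk, I first note that every feasible circuit direction at $\veo$ is componentwise nonnegative, so by Lemma~\ref{lem:unitvector} it must be a unit vector $\veg_e$; its maximal feasible step length is $1$, imposed by the rank inequality on the two-vertex subset containing the endpoints of $e$. Hence the intermediate point of any $2$-step walk from $\veo$ is $\ve{X}(\{e\})$ for some $e \in E$, and reaching $\ve{X}(F)$ in a second step requires $\ve{X}(F) - \ve{X}(\{e\})$ to be a positive scalar multiple of a circuit. If $e \in F$, this direction equals $\ve{X}(F \setminus \{e\})$---uniform-sign with $|F|-1 \geq 2$ entries and hence not a circuit by Lemma~\ref{lem:unitvector}. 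If $e \notin F$, the direction is mixed-sign with $|F|$ positive and one negative entry, and I will show it is not a circuit by exhibiting a nonzero $\vey$ with $\supp(B\vey) \subsetneq \supp(B(\ve{X}(F) - \ve{X}(\{e\})))$, invoking Theorem~\ref{thm:notacircuit}. A natural candidate is the unit vector $\veg_h$ for a leaf-edge $h$ of $F$ whose other endpoint is disjoint from both endpoints of $e$; then every set $U$ containing both endpoints of $h$ has either $e \notin E(G[U])$ (so $(B\veg)_U \geq 1$) or contains additional $F$-edges (so $(B\veg)_U \geq 2 - 1 = 1$), giving the desired strict containment.

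The main obstacle is this last subcase ($e \notin F$) when no such leaf-edge exists---most notably when $F$ is a Hamiltonian path on $V$ and $e$ is its closing chord, or when $F$ is disconnected into components that $e$ interacts with in a way that (by Lemma~\ref{lem:discon_circs}) would make $\ve{X}(F) - \ve{X}(\{e\})$ a circuit. For the Hamiltonian-path subcase I plan to use the length-$3$ alternating-path vector $\vey = -\veg_{h_1} + \veg_e - \veg_{h_2}$, where $h_1, h_2$ are the $F$-edges incident to the endpoints of $e$, which is itself a circuit by a Lemma~\ref{lem:alts}-style argument and certifies non-circuitness of $\ve{X}(F) - \ve{X}(\{e\})$. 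For the remaining degenerate subcases I will, if necessary, switch to an alternative target extreme point $\ve{X}(F')$ for a carefully chosen $3$-edge subforest $F' \subseteq F$ whose vertex-coverage and leaf structure block every candidate $e$. Finally, tightness of the bound is confirmed by the explicit walk $\veo \to \ve{X}(\{f_1\}) \to \ve{X}(\{f_1, f_2\}) \to \ve{X}(\{f_1, f_2, f_3\})$ along three unit-vector circuits for any three edges $\{f_1, f_2, f_3\} \subseteq F$.
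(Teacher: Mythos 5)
Your overall framing (rule out one-step walks via Lemma \ref{lem:unitvector}, observe that the first step from $\veo$ must be a unit-vector step of length $1$, and then show $\ve{X}(F)-\ve{X}(\{e\})$ is never a circuit) matches the paper's strategy, but your treatment of the key case $e\notin F$ has a genuine gap: both certificates you propose fail in general. For the leaf-edge certificate, the assertion that any $U$ containing both endpoints of $h$ and inducing $e$ must contain an additional $F$-edge is false when $h$ is far from $e$: take $V=\{u,v,x,a,b\}$, the spanning tree $F=\{ux,vx,xa,ab\}$, $e=uv$, and the leaf edge $h=ab$ (its endpoints avoid $e$); the set $U=\{u,v,a,b\}$ induces exactly $e$ and $ab$, so it is balanced with respect to $\vez=\ve{X}(F)-\veg_e$ yet lies in $\supp(B\veg_h)$, so $\supp(B\veg_h)\not\subseteq\supp(B\vez)$. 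The alternating-path fallback also fails once the Hamiltonian path is long enough: for $F$ the path $u,x_1,x_2,x_3,x_4,v$ and $e=uv$, the set $\{u,v,x_2,x_3\}$ is balanced for $\vez$ but has sum $+1$ for $\vey=-\veg_{ux_1}+\veg_e-\veg_{x_4v}$, so $\supp(B\vey)\not\subseteq\supp(B\vez)$ (being a circuit does not by itself make $\vey$ a certificate; the support containment is what matters). Your last-resort retargeting to a $3$-edge subforest $F'$ is also problematic: whenever some $e\in E$ is vertex-disjoint from $F'$, the vector $\ve{X}(F')-\veg_e$ \emph{is} a circuit by Lemma \ref{lem:discon_circs}, and $\veo\to\ve{X}(\{e\})\to\ve{X}(F')$ is then a legitimate two-step walk, so such a pair cannot witness the bound; you give no construction avoiding this.

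The fix is to choose the certifying edge \emph{close} to $e$ rather than far from it, which is what the paper does via Lemma \ref{lem:singleedgedrop}. Since $F$ is spanning, the endpoints of $e$ are joined by a path $\mathcal{P}\subseteq F$ with at least two edges. If $\mathcal{P}$ has at least three edges, let $f\in\mathcal{P}$ be incident to $e$ and $h\in\mathcal{P}$ incident to $f$ but not to $e$: any set balanced for $\vez$ that induces $h$ must induce $e$ (the only $-$-edge) and exactly one $F$-edge, yet it then also induces $f$, a contradiction; so $h$ lies in no balanced set and can be dropped, certifying that $\vez$ is not a circuit. If $\mathcal{P}$ has two edges (a triangle with $e$), the same argument applies to either edge of $\mathcal{P}$. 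The paper packages this slightly differently---it argues that no $0/1$ circuit whose unique $-$-entry is on $e_1$ can add all of $\mathcal{P}$, so the point reached after two steps misses an $F$-edge---but the engine is exactly this single-edge-drop argument applied at distance one from $e$, not to a remote leaf edge.
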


\begin{proof}
    The extreme-points of (\ref{MWF}) are feasible 0/1 vectors and correspond to the characteristic vectors of forests in $G$. We consider a walk from $\veo$ to $\ve{X}(F)$, where $F$ is a spanning forest of at least $3$ edges as in the statement. 
    At $\veo$, all feasible circuits are unit vectors. Assume the first circuit step taken is $\veg_{e_1}$ for some $e_1 \in E$. 
    The next feasible circuit step taken can either be along another unit vector or a mixed-sign vector. Since $F$ has at least 3 edges, one cannot do better than 3 steps if the second step is another unit vector. 
    
    If the second step is a mixed-sign vector $\veg$, it has $\veg(e_1) < 0$ and $\veg(e) > 0$ for some number of other edges $e$. Due to $\veX(F)$ being an extreme-point, we may assume that the step along $\veg$ arrives at a new extreme-point as otherwise (at least) a third step is required to arrive at $\veX(F)$. This implies that $\veg$ is a 0/1 vector and the step length is $1$. We will conclude by showing that, in all cases, there exists some edge in $F$ that still has to be added after the second step along $\veg$.
    
    If $e_1 \in F$, at least one more circuit step is required to move to $\veX(F)$ because $e_1$ has to be added; recall it was dropped in the second step. If $e_1\notin F$, then since $F$ is spanning, the vertices of $e_1$ are connected by a path $\mathcal{P}$ in $F$ of at least two edges. Regardless of the length of $\mathcal{P}$, we will show that at least one edge in $\mathcal{P}$ was not added in the second circuit step. If $\mathcal{P}$ has two edges, then it forms a triangle with $e_1$ and only one of the two edges in $\mathcal{P}$ can be added by the second circuit step. If $\mathcal{P}$ has at least $3$ edges, then it contains an edge $f$ incident to $e_1$ and an edge $h$ incident to $f$ but not $e_1$. In this case, the second circuit step cannot remove $e_1$ and add both $f$ and $h$: with $\veg(e_1)=-1$ for the only $-$-edge $e_1$ and $\veg(f)=\veg(h)=1$, $h$ is not in any balanced subgraphs with respect to $\veg$; a contradiction to $\veg$ being a circuit by Lemma \ref{lem:singleedgedrop}. This again means that at least one more circuit step is required. Thus, in all cases we have a circuit walk of at least 3 steps. This proves our claim.
\end{proof}

We note that quite restrictive properties are required to make the bound of $3$ in Theorem \ref{thm:low_diam_bnd} tight. In fact, in most situations a circuit walk from $\veo$ to $\ve{X}(F)$ for a spanning forest $F$ requires at least $4$ steps.  Recall that the eccentricity of a vertex is the maximum distance from that vertex to any other vertex and the radius of a connected graph is the minimum eccentricity over all vertices.  To achieve a walk of exactly 3 steps, one would need either at least one component with radius 1 where the spanning forest of that component utilizes all the edges of the vertex whose eccentricity in that component is 1,  or to have a component which is a path of length less or equal to 3. In all other cases, a lower bound on the number of circuit steps is 4. When the spanning forest is built using all edges from a vertex with eccentricity 1 in its component, it means the third and final circuit step is the unit vector $e_1$. A vertex with eccentricity 1 in its component is adjacent to all other vertices in the same component, so the second circuit step can add all edges in the forest except the edge which was removed. The same situation plays out when the graph contains a path of length 3 or less as a component as $e_1$ can correspond to the middle edge in the path of length 3.

\section{Conclusion and Final Remarks} 
\label{sec:concl}

In this work, we studied circuits and circuit walks with an interest in their interpretability relative to the underlying application. While the inclusion-minimal support of circuits makes them prime candidates to this end, a high circuit imbalance works directly against interpretability. Even for the highly structured matrices arising in combinatorial optimization, this poses a significant challenge. We exhibited a number of constructions of simple, polynomially sized matrices with rows corresponding to characteristic vectors of ``simple" edge sets, all of which lead to an exponential imbalance. This holds even for very restrictive edges sets, such as all those of paths of length $3$ or cycles of length at most $4$. Similarly simple collections of constraints appear commonly as part of formulations of graph-theoretic problems, and so it becomes natural to restrict the entries of circuits for the purpose of recovering interpretability. We followed such an idea for the study of polytopes coming from two classic problems: vertex coloring and (max weight) forest problems (i.e., graphic matroids). We studied their 0/1 circuits and showed that they suffice for the construction of short walks between the meaningful extreme-points of the underlying application; this point of view closely relates to the field of combinatorial reconfiguration. In fact, for the coloring problem, we saw that the 0/1 circuits generalize classic Kempe swaps and guarantee the ability to transform any coloring into any other---a property that Kempe swaps alone do not have.

Our work leads quite naturally to more general questions. As discussed in Section \ref{sec:intro} and as we have seen in some of this work, the set of just the 0/1 circuits often suffices to find short circuit walks between extreme-points of polyhedra coming from combinatorial optimization.
A natural direction for further investigation is to determine conditions under which this is the case. In general, a polyhedron may not have any 0/1 circuits, so the existence of such circuits is of course necessary and an interesting question in its own right. We say a polyhedron $P$ satisfies \textit{0/1 circuit reachability} if for all ordered pairs of extreme-points $(\veu,\vev)$ of $P$, there exists a circuit walk from $\veu$ to $\vev$ in $P$ using only 0/1 circuits.  This too is necessary, but it does not suffice in order to conclude that there exist \textit{short} circuit walks using only 0/1 circuits. An example is given in Figure \ref{fig:no_short_path}, which depicts a polytope $P$ which is the convex hull of the points $(0,0), (\varepsilon,0), (M+1+\varepsilon,M),$ and $(M+1+\varepsilon,M+\varepsilon)$ for some $\varepsilon$ and $M$ with $\varepsilon \ll M$.  It's only 0/1 circuits are $\pm(1,0)$ and $\pm(0,1)$,  and so any circuit walk from $(0,0)$ to $(M+1+\varepsilon,M+\varepsilon)$ using only 0/1 circuits requires a number of circuit steps at least on the order of $2\frac{M}{\varepsilon}$.

This example demonstrates that the guaranteed existence of short paths using only 0/1 circuits is not only a property of the circuits themselves (which only depend on the constraint matrices of a given polyhedron, but not the right-hand-side vectors), but is also sensitive to the geometry of the polyhedron (i.e., also the right-hand-side vectors).  This should be unsurprising, but it does make explicitly clear the fact that, if one wants to characterize the conditions under which the 0/1 circuits of a polyhedron suffice to give short paths between all pairs of extreme-points, this characterization must involve a close interplay between the constraint matrices defining the polyhedron and the polyhedron itself. 

There is some additional and interesting nuance in this possible direction of research. Circuits inherently are a concept of linear programming, not integer programming (where the equivalent Graver bases would arise  \cite{g-75}). One commonly uses the circuits of the relaxation of an integer formulation of, say, a graph-theoretic problem. This allows one to traverse the relaxed polyhedron, which may have fractional extreme-points that do not appear in the integer hull of the original problem, as is the case with the fractional coloring polytope studied here.  Though we studied the circuits of the fractional coloring polytope, we restricted our attention to circuit walks that moved only between integral extreme points.  It is potentially interesting to characterize when it is possible to reach the fractional extreme-points of the LP relaxation of an IP, but still under a restriction to 0/1 circuits. 

Conversely, it may be more justified to restrict definitions like that of the proposed ``0/1 circuit reachability" to \textit{only} consider reachability and distance of 0/1 (or integral) extreme-points, and possibly further restricting the walks to only stop at integral points,  just as we did for the fractional coloring polytope. Note that for many problems, this could further serve the goal of maintaining interpretability, with these added restrictions enforcing interpretability not just of the circuits used, but of the intermediary solutions visited. Which of these regimes may be more interesting or useful could very well be context-dependent. This setup contrasts with the much stronger property satisfied by some polyhedra in which {\em all} circuit walks stop only at integral points \cite{bv-17}, such as polyhedra described with a totally-unimodular matrix.

Most of the above discussion on working only with the 0/1 circuits also holds for more general but still restricted subsets of the set of circuits---for example those adhering to fixed upper bounds on the absolutes of entries or on the circuit imbalance. In particular, it is easy to exhibit the zig-zagging behavior displayed in Figure \ref{fig:no_short_path} in more general settings. 
Further, the questions we pose are equally relevant to circuit augmentation. In \cite{dknv22,env-21}, the authors devised bounds on circuit diameters and the number of iterations of a circuit augmentation algorithm based on the circuit imbalance measure of the underlying set of circuits. These bounds become particularly strong when the circuit imbalance is low, like for totally-unimodular matrices. We see an exciting research direction in investigating under which circumstances these or similar bounds can be established when working with a restricted set of circuits of low imbalance (while the underlying problem still has a high imbalance). However, as with many results about the theoretical performance of circuit augmentation algorithms, the arguments in \cite{dknv22,env-21} crucially rely on using the \textit{whole} set of circuits, and therefore must parameterize by the imbalance of the whole set.  As such, we expect that new tools will have to be developed in order to explore this further.

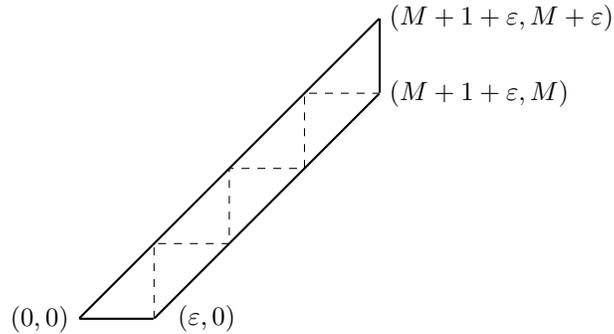
\begin{figure}[htb]
\centering
    \begin{tikzpicture}
            \tikzset{vertex/.style={draw=none,fill=none,inner sep=0pt,minimum size=0pt}}
            \node[vertex, label=left:{$(0,0)$}] (1) at (0,0) {};
            \node[vertex, label={[xshift=.7cm,yshift=-.3cm]$(\varepsilon,0)$}] (2) at (1,0) {};
            \node[vertex, label=right:{$(M+1+\varepsilon,M)$}] (3) at (4,3) {};
            \node[vertex, label=right:{$(M+1+\varepsilon,M+\varepsilon)$}] (4) at (4,4) {};
            \draw[thick] (1)--(2)--(3)--(4)--(1) {};
            \draw[dashed] (1,0)--(1,1)--(2,1)--(2,2)--(3,2)--(3,3)--(4,3){};
            \end{tikzpicture}
    \caption{A polytope satisfying that 1) each extreme-point can be reached from each other extreme-point via a circuit walk using only 0/1 circuits, and 2) any circuit walk from $(0,0)$ to $(M+1+\varepsilon,M+\varepsilon)$ using only 0/1 circuits requires a number of steps at least on the order of $2\frac{M}{\varepsilon}$.  The dashed lines give an example of such a circuit walk.}
         \label{fig:no_short_path}
    \end{figure}

\vspace*{0.5cm}

\noindent\textbf{Acknowledgments.} 
We thank Alexander Black for insightful discussions on the topics of Section \ref{sec:col}.\\\\
\noindent\textbf{Funding.} The work of Borgwardt and Lee was supported by the Air Force Office of Scientific Research under award number FA9550-24-1-0240. The work of Lee was additionally supported by the Office of Naval Research under award number N00014-24-1-2694.

\end{document}